\documentclass[10pt]{amsart}
\setlength{\oddsidemargin}{.25in}
\setlength{\evensidemargin}{.25in} 
\setlength{\topmargin}{.25in}
\setlength{\headsep}{.25in}
\setlength{\headheight}{0in}
\setlength{\textheight}{8.5in}
\setlength{\textwidth}{6in} 
\usepackage{setspace}
\usepackage{amsmath, amsfonts, amsthm, amstext, xspace}
\usepackage{amssymb,latexsym}
\usepackage{xcolor}
\usepackage{hyperref}
\definecolor{seagreen}{RGB}{46,139,87}
\definecolor{maroon}{RGB}{128,0,0}
\definecolor{darkviolet}{RGB}{148,0,211}
\definecolor{twelve}{RGB}{100,100,170}
\definecolor{thirteen}{RGB}{100,150,50}
\definecolor{fourteen}{RGB}{200,0,0}
\definecolor{fifteen}{RGB}{0,200,0}
\definecolor{sixteen}{RGB}{0,0,200}
\definecolor{seventeen}{RGB}{200,0,200}
\definecolor{eighteen}{RGB}{0,200,200}

\usepackage{graphicx}
\usepackage{lscape}


\makeatletter
\newcommand{\addresseshere}{%
  \enddoc@text\let\enddoc@text\relax
}
\makeatother

\usepackage[all]{xy}
\newtheorem{thm}{Theorem}[section]
\newtheorem{lem}[thm]{Lemma}

\newtheorem{prop}[thm]{Proposition}

\newtheorem{rem2}[thm]{Remark}

\newtheorem{conv}[thm]{Convention}
\theoremstyle{definition}
\newtheorem{defin}[thm]{Definition}
\newtheorem{exm}[thm]{Example}



\def\f{\mathbb{F}}

\def\p{\mathbb{P}}

\def\r{\mathbb{R}}

\def\z{\mathbb{Z}}





\newcommand{\modmod}{\mathbin{\!/\mkern-5mu/\!}} 

\usepackage{amssymb}
\usepackage{tikz-cd}

\setcounter{tocdepth}{1}

\author{J.D. Quigley}\address{Cornell University}\email{jdq27@cornell.edu}
\title[$\mathit{tmf}$-based Mahowald invariants]{$\mathit{tmf}$-based Mahowald invariants}

\begin{document}
\maketitle

\begin{abstract}
The $2$-primary homotopy $\beta$-family, defined as the collection of Mahowald invariants of Mahowald invariants of $2^i$, $i \geq 1$, is an infinite collection of periodic elements in the stable homotopy groups of spheres. In this paper, we calculate $\mathit{tmf}$-based approximations to this family. Our calculations combine an analysis of the Atiyah-Hirzebruch spectral sequence for the Tate construction of $\mathit{tmf}$ with trivial $C_2$-action and Behrens' filtered Mahowald invariant machinery.
\end{abstract}

\tableofcontents

\section{Introduction}\label{Sec:Introduction}

\subsection{Greek letter families}

\emph{Greek letter elements} form infinite, periodic families in the stable homotopy groups of spheres. These families were constructed at low chromatic heights  by Adams \cite{Ada66}, Smith \cite{Smi70}, and Toda \cite{Tod71}, and their work led to the Greek letter construction which builds analogous families at higher chromatic heights. This general procedure is effective in many cases, but it requires the existence of certain generalized Moore spectra. These do not always exist, so the number of Greek letter families which can be produced via the Greek letter construction is limited. 

Miller, Ravenel, and Wilson \cite{MRW77} defined \emph{algebraic Greek letter elements} in the $E_2$-term of the Adams-Novikov spectral sequence for the sphere. They then defined Greek letter elements as the classes in homotopy which the algebraic Greek letter elements detect. This definition works at all chromatic heights and all primes, but since algebraic Greek letter elements do not always survive in the Adams-Novikov spectral sequence, some of the resulting Greek letter elements are zero.

In \cite{MR93}, Mahowald and Ravenel defined \emph{homotopy Greek letter elements}. These elements are defined at all chromatic heights and all primes, and moreover, they are always nonzero. Furthermore, calculations at low heights suggest that homotopy Greek letter elements coincide with the Greek letter elements as defined above (whenever they are nonzero). Homotopy Greek letter elements have been completely calculated at chromatic height one, and they have been studied extensively at chromatic height two in the odd-primary setting. They have also been calculated in low dimensions at the prime two. This paper is a first step towards completely calculating the $2$-primary homotopy Greek letter family at chromatic height two.

\subsection{The Mahowald invariant and homotopy Greek letter families}\label{Sec:MIGreek}

Homotopy Greek letter elements are defined using the Mahowald invariant. Although our interest in the Mahowald invariant is limited to chromatic homotopy theory in this paper, we note that the Mahowald invariant has a wide array of applications, including unstable homotopy theory \cite{Mah67, MR93}, geometry \cite{Sch82, Sto88, HLSX18}, equivariant homotopy theory \cite{BG95}, and motivic homotopy theory \cite{Qui19a, Qui19c, Qui19b}. 

The Mahowald invariant of a class $\alpha \in \pi_t(S^0)_{(p)}$ in the $p$-local stable stems is a nontrivial coset $M(\alpha) \subset \pi_{t+N-1}(S^0)_{(p)}$ contained in a higher $p$-local stable stem. Let $\alpha \in \pi_t(S^0)_{(2)}$. The \emph{Mahowald invariant of $\alpha$}, denoted $M(\alpha)$, is the coset of completions of the diagram
\begin{equation}\label{Eqn:MI}
\begin{tikzcd}
S^t \arrow{d}{\alpha} \arrow[dashed,rr,"M(\alpha)"] & & S^{-N+1} \arrow{d} \\
S^0 \arrow{r}{\simeq} & \Sigma P^\infty_{-\infty} \arrow{r} & \Sigma P^\infty_{-N}
\end{tikzcd}
\end{equation}
where $N>0$ is the minimal integer such that the left-hand composite from $S^t$ to $\Sigma P^\infty_{-N}$ is nontrivial. The spectrum $P^\infty_{-N}$ is the Thom spectrum of the $(-N)$-fold Whitney sum of the tautological bundle over $\r\p^\infty$, and $P^\infty_{-\infty}$ is the inverse limit of these as $N$ tends to infinity. In particular, the minimality of $N$ ensures that the coset $M(\alpha)$ is nontrivial. The map $S^0 \to \Sigma P^\infty_{-\infty}$ is a $2$-adic equivalence by Lin's Theorem \cite{LDMA80}.

In \cite[Conj. 12]{MR87}, Mahowald and Ravenel conjectured that the Mahowald invariant carries $v_n$-periodic classes to $v_n$-torsion classes (with some exceptions). This conjecture has been verified by explicit computation in many cases:
\begin{enumerate}
\item When $n=0$ and $p=2$, Mahowald and Ravenel computed $M(2^i)$ for all $i \geq 1$ and showed all the elements in it are $v_1$-periodic \cite{MR93}.
\item When $n=0$ and $p\geq 3$, Mahowald and Ravenel and Sadofsky showed $\alpha_i \in M(p^i)$ for all $i \geq 1$ \cite{MR93, Sad92}.
\item When $n=1$ and $p\geq 5$, Mahowald and Ravenel and Sadofsky proved $\beta_i \in M(\alpha_i)$ for all $i \geq 1$ \cite{MR93, Sad92}.
\item When $n=1$ and $p \geq 5$, Sadofsky further proved that $\beta_{p/2} \in M(\alpha_{p/2})$ \cite{Sad92}.
\item When $n=1$ and $p =3$, Behrens calculated that $(-1)^{i+1} \beta_i \in M(\alpha_i)$ for $i \equiv 0,1,5 \mod 9$ \cite{Beh06}.
\item When $n=1$ and $p=2$, Behrens determined $M(x)$ for $x \in \pi_{\leq 12}(S^0)_{(2)}$. In particular, his calculations imply that $M(M(2^i))$ is $v_2$-periodic for $i \leq 7$ \cite{Beh07}. 
\end{enumerate}

In all of these calculations, (iterated) Mahowald invariants contain Greek letter elements whenever they exist and are nonzero \cite{MR93}. This observation led Mahowald and Ravenel to make the following definition. 

\begin{defin}\label{Def:HPB}\cite[Def. 3.6]{MR93}
The \emph{$p$-primary $i$-th homotopy Greek letter element} is defined by $\alpha_i^h := M(p^i)$, $\beta_i^h := M(M(p^i))$, and so on. 
\end{defin}

The computations of Mahowald, Ravenel, Sadofsky, and Behrens may therefore be viewed as computations of $\alpha_i^h$ at all primes, $\beta^h_i$ at all primes $p \geq 5$,  $\beta^h_i$ for $i \equiv 0,1,5 \mod 9$ at the prime $p=3$, and $\beta^h_i$ for $i \leq 7$ at the prime $p=2$. 

\subsection{The $E$-based Mahowald invariant}

The Mahowald invariant is difficult to calculate directly: one must understand a range of the stable homotopy groups of $P^\infty_{-N}$ for various $N>0$, and this is roughly as difficult as understanding the stable homotopy groups of spheres (cf. \cite{CLM88, DM86}). Therefore most Mahowald invariant calculations, including all of those listed above, rely on certain approximations to the Mahowald invariant which use simpler homology theories than stable homotopy groups. 

Let $E$ be a spectrum equipped with a trivial $C_2$-action. By \cite[Thm. 16.1]{GM95}, there is an equivalence
$$E^{tC_2} \simeq \lim_n (\Sigma P^\infty_{-n} \wedge E),$$
where $E^{tC_2}$ is the $C_2$-Tate construction. Let $\alpha \in \pi_t(E^{tC_2})$. The \emph{$E$-based Mahowald invariant of $\alpha$}, $M_E(\alpha)$, is the coset of completions of the diagram
\[
\begin{tikzcd}
S^t \arrow{d}{\alpha} \arrow[dashed,rr,"M_E(\alpha)"] & & \Sigma^{-N+1} E \arrow{d} \\
E^{tC_2} \arrow{r}{\simeq} &  \underset{\leftarrow}{\lim} (E \wedge \Sigma P^\infty_{-n}) \arrow{r} & E \wedge \Sigma P^\infty_{-N}
\end{tikzcd}
\]
where $N>0$ is minimal such that the left-hand composite from $S^t$ to $E \wedge \Sigma P^\infty_{-N}$ is nontrivial. 

Mahowald and Ravenel \cite{MR93} and Sadofsky \cite{Sad92} used the $BP$-based Mahowald invariant to compute $M(p^i)$, $p \geq 3$, and $M(M(p^i))$, $p \geq 5$. However, $BP$ cannot be used to calculate $M(2^i)$ using the same techniques because $BP$ does not detect the relevant elements in the $2$-primary stable stems. Instead, Mahowald and Ravenel calculated $M(2^i)$ using the $bo$-based Mahowald invariant. In more detail, Davis and Mahowald \cite{DM84} showed that there is an equivalence after $2$-completion
$$bo^{tC_2} \simeq \bigvee_{i \in \z} \Sigma^{4i} H\z_2,$$
so in particular $2^i \in \z_2 \cong \pi_0(\bigvee_{i \in \z} H\z_2) \cong \pi_0(bo^{tC_2})$ for all $i \geq 1$. Thus $M_{bo}(2^i)$ is a well-defined coset in $\pi_*(bo)$. Mahowald and Ravenel computed this for all $i \geq 1$, then lifted their results along the Hurewicz map $S^0 \to bo$ to determine $\alpha_i^h = M(2^i)$ for all $i \geq 1$.

With this example in mind, the calculation of $M(M(2^i))$ should use the height two analog of $bo$, connective topological modular forms $\mathit{tmf}$. Bailey and Ricka \cite{BR19} showed that there is an equivalence after $2$-completion
$$\mathit{tmf}^{tC_2} \simeq \bigvee_{i \in \z} \Sigma^{8i} bo,$$
so $M_{\mathit{tmf}}(\alpha)$ is well-defined for any $\alpha \in \pi_*(bo)$. Thus $M_{\mathit{tmf}}(M_{bo}(2^i))$ is a well-defined coset in $\pi_*(\mathit{tmf})$. Mahowald and Ravenel's calculations at chromatic height one suggest that at height two, $M(M(2^i))$ may be determined by calculating $M_{\mathit{tmf}}(M_{bo}(2^i))$ for all $i \geq 1$ and then lifting these calculations along the Hurewicz map $S^0 \to \mathit{tmf}$. 

\subsection{Statement of main result}

In this paper, we carry out the first step in the program outlined above by computing $M_{\mathit{tmf}}(M_{bo}(2^i))$ for all $i \geq 1$. In Theorem \ref{tmfbasedMI}, the class $\eta$ generates $\pi_1(bo) \cong \z/2$, the class $\alpha$ generates $\pi_4(bo) \cong \z_2$, and the class $\beta$ is the Bott element which generates $\pi_8(bo) \cong \z_2$. The elements $M_{\mathit{tmf}}(M_{bo}(x)) \in \pi_*(\mathit{tmf})$ are named as in \cite{DFHH14}. The $bo$-based Mahowald invariants $M_{bo}(2^i)$ were calculated by Mahowald and Ravenel in \cite{MR93}. 

\begin{thm}\label{tmfbasedMI}
The following tables consist of the classes $2^i \in \pi_*(H\z_2)$, their $bo$-based Mahowald invariants $M_{bo}(2^i) \in \pi_*(bo)$, and the $\mathit{tmf}$-based Mahowald invariants of their $bo$-based Mahowald invariants $M_{\mathit{tmf}}(M_{bo}(2^i))$. The remaining $\mathit{tmf}$-based Mahowald invariants are determined by the rule $M_{\mathit{tmf}}(M_{bo}(2^{32} \cdot x)) = M_{\mathit{tmf}}(\beta^8 x) = \Delta^8 M_{\mathit{tmf}}(x)$. 
\begin{center}
\begin{tabular}{ |c|c|c|}
\hline
$x$ & $M_{bo}(x)$ & $M_{\mathit{tmf}}(M_{bo}(x))$ \\
\hline
$2^0$ & $1$ & $1[0]$\\
$2^1$ & $\eta$ & $\nu[-2]$ \\
$2^2$ & $\eta^2$ & $\nu^2[-4]$ \\
$2^3$ & $\alpha$ & $(c_4 + \epsilon)[-4]$ \\
\hline
\end{tabular}
\begin{tabular}{ |c|c|c|}
\hline
$x$ & $M_{bo}(x)$ & $M_{\mathit{tmf}}(M_{bo}(x))$ \\
\hline
$2^4$ & $\beta$ & $\bar{\kappa}[-12]$ \\
$2^5$ & $\beta \eta$ & $\eta \Delta[-16]$ \\
$2^6$ & $\beta \eta^2$ & $\bar{\kappa} \epsilon[-18]$ \\
$2^7$ & $\beta \alpha$ & $q[-20]$ \\
\hline
\end{tabular}
\begin{tabular}{ |c|c|c|}
\hline
$x$ & $M_{bo}(x)$ & $M_{\mathit{tmf}}(M_{bo}(x))$ \\
\hline
$2^8$ & $\beta^2$ & $\eta \Delta \bar{\kappa}[-29]$ \\
$2^9$ & $\beta^2 \eta$ & $\eta^2 \Delta^2[-33]$ \\
$2^{10}$ & $\beta^2 \eta^2 $ & $\nu \Delta^2 \eta^2[-35]$ \\
$2^{11}$ & $\beta^2 \alpha$ & $(c_4 + \epsilon) \Delta^2[-36]$ \\
\hline
\end{tabular}
\end{center}

\begin{center}
\begin{tabular}{ |c|c|c|}
\hline
$x$ & $M_{bo}(x)$ & $M_{\mathit{tmf}}(M_{bo}(x))$ \\
\hline
$2^{12}$ & $\beta^3$ & $\eta\Delta\bar{\kappa}^2[-41]$ \\
$2^{13}$ & $\beta^3 \eta$ & $\eta^2 \Delta^2 \bar{\kappa}[-45]$ \\
$2^{14}$ & $\beta^3 \eta^2$ & $\eta^3 \Delta^3[-49]$ \\
$2^{15}$ & $\beta^3 \alpha$ & $\Delta^3 c_4[-52]$ \\
\hline
\end{tabular}
\begin{tabular}{ |c|c|c|}
\hline
$x$ & $M_{bo}(x)$ & $M_{\mathit{tmf}}(M_{bo}(x))$ \\
\hline
$2^{16}$ & $\beta^4$ & $\eta^2\Delta^2\bar{\kappa}^2[-58]$\\
$2^{17}$ & $\beta^4\eta$ & $\nu\Delta^4[-66]$ \\
$2^{18}$ & $\beta^4\eta^2$ & $\Delta^4\nu^2[-68]$ \\
$2^{19}$ & $\beta^4\alpha$ & $\Delta^4(c_4 + \epsilon)[-68]$ \\
\hline
\end{tabular}
\begin{tabular}{ |c|c|c|}
\hline
$x$ & $M_{bo}(x)$ & $M_{\mathit{tmf}}(M_{bo}(x))$ \\
\hline
$2^{20}$ & $\beta^5$ & $2\Delta^4 \bar{\kappa}[-76]$ \\
$2^{21}$ & $\beta^5 \eta$ & $\eta^2 \Delta^5[-81]$ \\
$2^{22}$ & $\beta^5 \eta^2$ & $\eta \Delta \bar{\kappa}^5[-83]$ \\
$2^{23}$ & $\beta^5 \alpha$ & $\Delta^4q[-84]$ \\
\hline
\end{tabular}
\end{center}

\begin{center}
\begin{tabular}{ |c|c|c|}
\hline
$x$ & $M_{bo}(x)$ & $M_{\mathit{tmf}}(M_{bo}(x))$ \\
\hline
$2^{24}$ & $\beta^6$ & $\Delta^4 \kappa^3[-90]$ \\
$2^{25}$ & $\beta^6 \eta$ & $\eta^2 \Delta^5 \bar{\kappa}[-93]$ \\
$2^{26}$ & $\beta^6 \eta^2 $ & $\Delta^6 \eta^3[-97]$ \\
$2^{27}$ & $\beta^6 \alpha$ & $(c_4 + \epsilon) \Delta^6[-100]$ \\
\hline
\end{tabular}
\begin{tabular}{ |c|c|c|}
\hline
$x$ & $M_{bo}(x)$ & $M_{\mathit{tmf}}(M_{bo}(x))$ \\
\hline
$2^{28}$ & $\beta^7$ & $\nu \Delta^6 \kappa[-105]$ \\
$2^{29}$ & $\beta^7 \eta$ & $\nu\Delta^6 \kappa \eta[-105]$ \\
$2^{30}$ & $\beta^7 \eta^2$ & $\nu \Delta^6 \kappa \nu[-107]$ \\
$2^{31}$ & $\beta^7 \alpha$ & $\Delta^7 c_4[-116]$ \\
\hline
\end{tabular}
\end{center}
\end{thm}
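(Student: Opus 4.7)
The plan is to compute each $M_{\mathit{tmf}}(M_{bo}(2^i))$ by running the Atiyah--Hirzebruch spectral sequence associated to the skeletal filtration of $\Sigma P^\infty_{-\infty}$ after smashing with $\mathit{tmf}$, and then extracting Mahowald invariants from AHSS differentials via Behrens' filtered Mahowald invariant machinery. Since $\mathit{tmf}^{tC_2} \simeq \lim_n (\mathit{tmf} \wedge \Sigma P^\infty_{-n})$, the skeletal filtration of $\Sigma P^\infty_{-\infty}$ gives an AHSS converging to $\pi_*(\mathit{tmf}^{tC_2}) \cong \pi_*(\bigvee_{i\in\z} \Sigma^{8i} bo)$ whose $E_1$-page is $\pi_*(\mathit{tmf})$ in each stem filtration. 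Behrens' key observation is that a nonzero $d_r$-differential in such an AHSS whose source detects $\alpha$ furnishes an element of $M_{\mathit{tmf}}(\alpha)$ in the target, so the computation of $M_{\mathit{tmf}}(-)$ reduces to identifying AHSS differentials.

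First I would set up the spectral sequence explicitly. The Bailey--Ricka splitting $\mathit{tmf}^{tC_2} \simeq \bigvee_{i\in\z} \Sigma^{8i} bo$ controls the $E_\infty$-page, so the AHSS differentials must precisely cut $\pi_*(\mathit{tmf})[v_\infty]$-towers (in each cell filtration) down to the $bo$-pattern shifted by multiples of $8$. Next, for each input $x = M_{bo}(2^i)$ from the Mahowald--Ravenel table, I identify which cell of $\Sigma P^\infty_{-\infty}$ detects $x$ in $\pi_*(\mathit{tmf}^{tC_2})$; this is dictated by the value of $M_{bo}(2^i)$ and the correspondence between cell filtrations of $\Sigma P^\infty_{-\infty}$ and the $\Sigma^{8i} bo$-summands. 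The target filtration $-N+1$ given in each table entry of the theorem predicts, via Behrens' machinery, precisely the length and target cell filtration of the corresponding AHSS differential.

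Then I carry out the case analysis for $i = 0, 1, \ldots, 31$. For each $i$, I identify the source class on the $E_1$-page, track it through the AHSS pages, and compute the first nontrivial differential using (i) known $\pi_*(\mathit{tmf})$-module structure, (ii) comparison with the $bo$-based AHSS already analyzed by Mahowald--Ravenel to handle lower filtrations, and (iii) comparison with $\mathit{tmf}$-Hurewicz images of known classes to pin down ambiguities like $c_4$ versus $c_4 + \epsilon$. Finally I verify the periodicity statement $M_{\mathit{tmf}}(\beta^8 x) = \Delta^8 M_{\mathit{tmf}}(x)$ by observing that multiplication by $\beta^8$ on the source corresponds, under Bailey--Ricka, to a shift of $64$ cells, and matching this with the $\Delta^8$-periodicity of the relevant region of $\pi_*(\mathit{tmf})$.

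The hardest step will be ruling out hidden extensions and longer differentials that could refine or modify the naive AHSS answer in the ranges where $\pi_*(\mathit{tmf})$ is densest (for example around the entries involving $\bar{\kappa}^k$ and $\Delta^k$ with $k$ large, and the delicate entries $(c_4 + \epsilon) \Delta^k$ or the $2\Delta^4\bar\kappa$ at $i = 20$ where a factor of $2$ appears). Resolving these requires carefully exploiting the $\pi_*(\mathit{tmf})$-module structure of the AHSS, cross-checking with Behrens' filtered Mahowald invariant comparison between the AHSS and other filtrations (Adams/Adams--Novikov), and using the constraints imposed by $\beta^8$-periodicity to propagate answers from low-dimensional cases into the full table.
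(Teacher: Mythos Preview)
Your plan has the right ingredients listed but assembles them in the wrong order, and as a result it has a genuine gap. You propose to compute the AHSS for $\mathit{tmf}^{tC_2}$ directly and read off Mahowald invariants from its differentials; but the paper shows that one can only get the AHSS to the $E_8$-page by tracking attaching maps and Toda brackets, and explicitly says this partial analysis does \emph{not} suffice to determine $M_{\mathit{tmf}}(M_{bo}(2^i))$ in general. The essential missing input is the \emph{Adams} filtration, not just the Atiyah--Hirzebruch filtration: the actual argument starts from the algebraic $\mathit{tmf}$-based Mahowald invariant, i.e.\ the Davis--Mahowald computation in $\Ext_{A(2)}$, which identifies the first nontrivial $H\f_2$-filtered $\mathit{tmf}$-based Mahowald invariant $M^{[k]}_{\mathit{tmf}}(\alpha)$. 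One then lifts this into higher Adams filtration using the known Adams differentials in the ASS for $\mathit{tmf}$ together with $K$-Toda brackets for stunted projective spaces (the $\mathit{tmf}$-analog of Behrens' Lemma). Your outline treats the filtered Mahowald invariant as a cross-check rather than the main engine; in the paper it is the main engine, and the AHSS is used only to bound which classes in higher Adams filtration could possibly detect the given element.

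Two further points. First, your sentence ``a nonzero $d_r$-differential in such an AHSS whose source detects $\alpha$ furnishes an element of $M_{\mathit{tmf}}(\alpha)$ in the target'' is not how the Mahowald invariant is read off: $M_{\mathit{tmf}}(\alpha)$ is the permanent cycle $x[-N]$ that \emph{detects} $\alpha$ on the $E_\infty$-page, not the target of a differential out of it. Second, to actually pin down the answers you need a separate, preliminary computation that you do not mention: one first computes $M_{\mathit{tmf}}(2^{4i+m}\beta^j\alpha^k)$ for all $v_0$-periodic classes and shows these are detected by the $c_4^i\eta^m\Delta^j$-type classes. This is what rules out the many high-Adams-filtration candidates surviving in the $E_8$-page and reduces each case to at most one or two possibilities; without it, the ``ruling out hidden extensions and longer differentials'' step you flag as hardest cannot be completed.
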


\begin{rem2}
In low dimensions, our calculations suggest that $M_{\mathit{tmf}}(M_{bo}(2^i))$ is a close approximation to $M(M(2^i))$. In particular, we see that $M_{\mathit{tmf}}(M_{bo}(2^i))$ is in the Hurewicz image of $\mathit{tmf}$ for $i \in \{1,2,6\}$ and it agrees with $M(M(2^i))$ as calculated in \cite{Beh07}. 
\end{rem2}

Theorem \ref{tmfbasedMI} is proven in Section \ref{SectionFiltered}. We will say more about the proof in the next section. 

\subsection{Techniques: The Atiyah-Hirzebruch spectral sequence and filtered Mahowald invariant}

The $E$-based Mahowald invariant is typically computed by analyzing the Atiyah-Hirzebruch spectral sequence (AHSS) converging to $\pi_*(E^{tC_2})$. For $E = bo$, this is a fairly simple computation since the AHSS collapses at $E_4$. The computation is not straightforward when $E = \mathit{tmf}$, and we do not know precisely when the AHSS collapses. We analyze the AHSS up to the $E_8$-page in Section \ref{AHSS}, but this partial analysis does not suffice to calculate $M_{\mathit{tmf}}(M_{bo}(2^i))$ for all $i \geq 1$. The necessary additional machinery is described below.

Lin's Theorem provides an interesting filtration of the stable stems by attaching to each class $\alpha \in \pi_*(S^0)$ the dimension of the cell of $\Sigma RP^\infty_{-\infty}$ where it is detected. In practice, calculating the filtration of $\alpha$ is essentially equivalent to computing the Mahowald invariant of $\alpha$. If $E$ is a ring spectrum, then the $E$-Adams filtration gives an alternative method for attaching a number to $\alpha$; namely, the stage of the $E$-Adams resolution of the sphere where $\alpha$ is detected \cite[Ch. 2]{Rav03}. In \cite{Beh06}, Behrens combined these two filtrations of $\pi_*(S^0)$ to obtain a bifiltration which he used to define \emph{$E$-filtered Mahowald invariants}, $^EM^{[k]}(\alpha)$, $k \geq 0$. These formalize the idea of computing ``Mahowald invariants up to $E$-Adams filtration $k$." In particular, one has $M(\alpha) = {^E}M^{[\infty]}(\alpha)$ if the $E$-based Adams spectral sequence (ASS) converges, and if the spectral sequence has a vanishing line of finite slope, then one has $M(\alpha) = {^EM}^{[k]}(\alpha)$ for all $k >>0$. 

Behrens provides a procedure for lifting filtered Mahowald invariants into higher Adams filtration \cite[Procedure 9.1]{Beh06}. Roughly speaking, given an $E$-filtered Mahowald invariant $^EM^{[k]}(\alpha)$, an algorithmic analysis of the $E$-based Adams spectral sequence (ASS) and the attaching maps in $\Sigma P^\infty_{-\infty}$ allow one to compute ${^EM}^{[k+1]}(\alpha)$. This procedure was used to great effect in \cite{Beh06} and \cite{Beh07} to compute Mahowald invariants of $v_1$-periodic classes at the primes $p=2,3$. We note that to start the procedure, one must also identify the first nontrivial filtered Mahowald invariant; this can usually be done by consulting existing $Ext$-calculations such as \cite{Bru98}. 

To compute $M_{\mathit{tmf}}(M_{bo}(2^i))$ for all $i \geq 1$, we adapt Behrens' filtered Mahowald invariant machinery to the category of $\mathit{tmf}$-modules. In particular, we define the \emph{$E$-filtered $\mathit{tmf}$-based Mahowald invariant} by combining the $E$-Adams filtration of $\mathit{tmf}$ with the filtration of $\mathit{tmf}^{tC_2} \simeq \underset{\leftarrow}{\lim} (\mathit{tmf} \wedge \Sigma P^\infty_{-\infty})$ induced by the cellular filtration of $\Sigma P^\infty_{-\infty}$. We provide a pseudo-algorithm for lifting $E$-filtered $\mathit{tmf}$-based Mahowald invariants into higher Adams filtration in Section \ref{Sec:Pseudoalgorithm}. As in Behrens' calculations, we consult existing $Ext$-calculations due to Davis and Mahowald \cite{DM82} to determine the first nontrivial $H\f_2$-filtered $\mathit{tmf}$-based Mahowald invariants. We summarize their computations in Section \ref{SectionAlg}. 

\subsection{Ongoing and future work}
We conclude by mentioning some ongoing and future work related to the calculations in this paper. 

\subsubsection{Finite complexes and Greek letter families}
In \cite[Def. 3.1]{Beh07}, Behrens proposed a different definition of the homotopy Greek letter elements:

\begin{defin}[Behrens]
Suppose that $X$ is a type $n$ $p$-local finite complex for which $BP_*(X)$ is a free module over $BP_*/I$, for $I = (p^{i_0}, v_1^{i_1},\ldots,v_{n-1}^{i_{n-1}})$. Suppose that $X$ has $v_n^k$-multiplication. Then we define the homotopy Greek letter element $(\alpha^{(n)})^h_{k/i_{n-1},\ldots,i_0}$ to be the element of $\pi_*(S^0)$ which detects $v_n^k \in \pi_*(X)$ in the $E_1$-term of the AHSS.
\end{defin}

This definition may depend on the choice of $X$ and choice of detecting elements in the AHSS in \cite[Rem. 3.2]{Beh07}. However, if one takes $X = S^0/(2,\eta)$ to be a finite type $1$ complex with a $v_1^1$-self-map, then the resulting elements $\alpha^h_{i/1}$, $i \geq 1$, coincide with the family elements $\alpha^h_i := M(2^i)$, $i \geq 1$, defined using the Mahowald invariant. This suggests that if one takes $X$ to be a type $n$ complex with a $v_n^1$-self-map, then the finite complex definition of the homotopy Greek letter elements may produce the same elements as the Mahowald invariant definition.

In \cite{BE16}, Bhattacharya and Egger define a class of finite spectra which admit a $v_2^1$-self-map. In work in progress with Bhattacharya, we are calculating $\mathit{tmf}$-based approximations to the finite complex definition of $\beta^h_i$ for all $i \geq 1$. We plan to compare our finite complex calculations to the Mahowald invariant calculations in Theorem \ref{tmfbasedMI}.

\subsubsection{The $2$-primary homotopy $\beta$-family}

As noted above, the $\mathit{tmf}$-based Mahowald invariants we calculated are approximations to the homotopy $\beta$-family at the prime two. In future work, we plan to lift the computations in Theorem \ref{tmfbasedMI} (and their finite complex analogs calculated with Bhattacharya) along the Hurewicz map $S^0 \to \mathit{tmf}$ using the $\mathit{tmf}$-resolution. 

\subsection{Outline}
In Section \ref{AHSS}, we analyze the AHSS for $\mathit{tmf}^{tC_2}$. The resulting $E_8$-page is depicted in Appendix \ref{App:AHSS}.

In Section \ref{SectionAlg}, we introduce the algebraic $E$-based Mahowald invariant and review Davis and Mahowald's algebraic $\mathit{tmf}$-based Mahowald invariant calculations from \cite{DM82}. These algebraic computations serve as the starting point for our calculations of $\mathit{tmf}$-based Mahowald invariants. 

In Section \ref{SectionFiltered}, we adapt Behrens' filtered Mahowald invariant machinery to the category of $\mathit{tmf}$-modules. We apply this to compute $M_{\mathit{tmf}}(M_{bo}(2^i))$ for all $i \geq 1$. The result is summarized in Theorem \ref{tmfbasedMI}.

\subsection{Conventions}

Unless otherwise stated, everything outside of the Introduction is implicitly $2$-complete. We will write `AHSS' for ``Atiyah-Hirzebruch spectral sequence" and ``Atiyah-Hirzebruch spectral sequence converging to $\pi_*(\mathit{tmf}^{tC_2})$" and `ASS' for ``Adams spectral sequence." 

The Mahowald invariant and $E$-based Mahowald invariant is a coset of elements, but we will write ``$\beta = M_{tmf}(\alpha)$" instead of ``$\beta$ is contained in $M_E(\alpha)$" for conciseness. 

\subsection{Acknowledgements.} The author thanks Mark Behrens for his guidance throughout this project, as well as Prasit Bhattacharya and a referee for helpful comments. The author also thanks Andr{\'e} Henriques for allowing the use and modification of the picture of $\pi_*(\mathit{tmf})$  from \cite{DFHH14} in an earlier version. Finally, the author thanks Tilman Bauer and Hood Chatham, whose spectral sequence programs were used to produce various figures. The author was partially supported by NSF grant DMS-1547292.

\section{The Atiyah-Hirzebruch spectral sequence for $\mathit{tmf}^{tC_2}$}\label{AHSS}
We begin by analyzing the AHSS which arises from the cellular filtration of $\Sigma P^\infty_{-\infty}$ and converges to $\pi_*(\mathit{tmf}^{tC_2})$. Differentials in this spectral sequence are induced by the attaching maps in $P^\infty_{-n}$ which are detected by $\mathit{tmf}$. It is generally hard to determine such attaching maps unless they are detected by primary squaring operations, so we cannot completely determine the differentials in this spectral sequence. Instead, we run just enough differentials to make our $\mathit{tmf}$-based Mahowald invariant calculations in Section \ref{SectionFiltered}.

\subsection{The Atiyah-Hirzebruch spectral sequence}
The homotopy groups of $\mathit{tmf}$ were computed at $p=2$ by Bauer in \cite{Bau08}, and the homotopy groups of the Tate construction of $\mathit{tmf}$ equipped with a trivial $C_2$-action were computed by Bailey and Ricka:

\begin{thm}\cite[Theorem 1.1]{BR19}
There is an equivalence of spectra
$$\mathit{tmf}^{tC_2} \simeq \prod_{i \in \z} \Sigma^{8i} bo.$$
\end{thm}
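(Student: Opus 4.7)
The plan is to prove this via the mod $2$ Adams spectral sequence for $\mathit{tmf}^{tC_2}$, exploiting the equivalence $\mathit{tmf}^{tC_2} \simeq \holim_n(\mathit{tmf} \wedge \Sigma P^\infty_{-n})$ together with the Steenrod-module structure $H^*(\mathit{tmf};\f_2) \cong A \modmod A(2)$ (Hopkins--Mathew). The first step is to identify $H^*(\mathit{tmf}^{tC_2};\f_2)$ as an $A$-module: modulo a convergence check on the inverse limit, this is $(A \modmod A(2)) \otimes_{\f_2} M$ with diagonal Steenrod action, where $M = H^*(\Sigma P^\infty_{-\infty};\f_2) \cong \f_2[x^{\pm 1}]$ with $|x|=1$. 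Via the Milnor--Moore change-of-rings theorem, the Adams $E_2$-page for $\mathit{tmf}^{tC_2}$ collapses to $\Ext_{A(2)}^{*,*}(M,\f_2)$.

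The central algebraic task is to decompose $M$ as an $A(2)$-module. By analogy with the Davis--Mahowald calculation, where $M$ decomposes over $A(1)$ into shifted copies of $A(1) \modmod A(0)$ with period four and produces the splitting $bo^{tC_2} \simeq \bigvee_i \Sigma^{4i}H\z_2$, I anticipate a period-eight decomposition $M \cong \bigoplus_{i \in \z} \Sigma^{8i}(A(2) \modmod A(1))$. Establishing this would require an explicit analysis of the action of $Sq^1, Sq^2, Sq^4$ on the monomials $x^n$ via Wu's formula and binomial coefficients modulo $2$, showing that each block of eight consecutive cells assembles into an $A(2)$-summand with trivial $Sq^4$-extensions crossing block boundaries. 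A second application of change-of-rings then converts each summand to $\Ext_{A(1)}(\f_2, \f_2)$, which is the $E_2$-page of the classical Adams SS for $bo$. Hence at the $E_2$-level the ASS for $\mathit{tmf}^{tC_2}$ matches the one for $\prod_{i \in \z} \Sigma^{8i} bo$.

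To promote this $E_2$-level identification to a splitting of spectra, I would construct, for each $i \in \z$, a map $\Sigma^{8i} bo \to \mathit{tmf}^{tC_2}$ realizing the inclusion of the corresponding summand on the Adams $E_2$-page. A natural source for such maps is the skeletal filtration of $\Sigma P^\infty_{-\infty}$: each eight-cell subquotient, after smashing with $\mathit{tmf}$, should split off as a copy of $\Sigma^{8i}bo$ in accordance with the algebraic decomposition, since the attaching maps detected by $\mathit{tmf}$ are governed precisely by the $A(2)$-module structure analyzed above. Assembling these maps and comparing on mod $2$ cohomology yields the desired equivalence after checking that the ASS converges; this in turn follows from the $2$-completeness of $\mathit{tmf}^{tC_2}$ and the tower structure of $\Sigma P^\infty_{-n}$.

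The principal obstacle is the explicit $A(2)$-module decomposition of $M$ in the second step: one must compute $Sq^4$-actions on $x^n$ carefully to identify the splitting and rule out hidden extensions across period-eight boundaries, which is a nontrivial bookkeeping problem. A secondary concern is ensuring no nontrivial Adams differentials appear beyond those already present in the $bo$-ASS; this should follow from the $\mathit{tmf}$-module structure of $\mathit{tmf}^{tC_2}$, as any extra differential would be inconsistent with the summand-wise module action.
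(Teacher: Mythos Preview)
The paper does not prove this statement; it is cited from Bailey--Ricka \cite{BR19} and used as input. The only hint in the paper about their argument is the later Example recording that Bailey and Ricka construct cofiber sequences of $\mathit{tmf}$-modules
\[
\Sigma^{-8k}\,\mathit{tmf}\wedge P^\infty_{-1}\;\to\;\mathit{tmf}\wedge P^\infty_{-1-8k}\;\to\;\Sigma^{-8k}\,\mathit{tmf}\wedge L_0
\]
with $L_0=(S^1\cup_2 e^2\cup_\eta e^4\cup_\nu e^8)_+$, and that these sequences are the core of their splitting proof. So there is no in-paper proof to compare against; what follows compares your outline to that description of \cite{BR19}.

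Your algebraic backbone is correct, and in fact the step you call the ``principal obstacle''---the $A(2)$-module splitting $H^*(\Sigma P^\infty_{-\infty})\cong\bigoplus_{i\in\z}\Sigma^{8i}A(2)\modmod A(1)$---is stated in the paper (Section~\ref{SectionAlg}) as an elementary Steenrod-algebra calculation, so it is not where the difficulty lies. The real gaps are the two steps you treat lightly. First, $\mathit{tmf}^{tC_2}$ is a homotopy inverse limit, and $H^*(-;\f_2)$ does not commute with such limits; your ``modulo a convergence check'' hides a genuine issue, and one cannot simply run a single Adams spectral sequence with $E_2=\Ext_{A(2)}(M,\f_2)$ without working pro-spectrally or with the finite stages $\mathit{tmf}\wedge\Sigma P^\infty_{-n}$. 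Second, and more substantively, upgrading an $E_2$-level splitting to a spectrum-level one requires actually producing maps $\Sigma^{8i}bo\to\mathit{tmf}^{tC_2}$, which in turn needs an identification of $\mathit{tmf}$ smashed with an eight-cell block with a copy of $bo$, together with a proof that the cofiber sequences linking consecutive blocks split as $\mathit{tmf}$-modules. That is precisely what Bailey--Ricka do with $L_0$, and your sketch only asserts it (``should split off'') without a mechanism. The appeal to the $\mathit{tmf}$-module structure to exclude extra Adams differentials is likewise not an argument as stated.
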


By \cite[Thm. 16.1]{GM95}, the Tate construction of a spectrum $X$ equipped with a trivial $C_2$-action may be described as a homotopy limit
$$X^{tC_2} \simeq \lim_{k} (X \wedge \Sigma P^\infty_{-k}).$$
The AHSS for $\mathit{tmf}^{tC_2}$ arises from the cellular filtration of $\Sigma P^\infty_{-\infty}$ by applying $\mathit{tmf}_*(-)$. The filtration quotients of the cellular filtration $F_i/F_{i+1} \simeq S^i$ are just spheres, so the $E_1$-page of the AHSS converging to $\pi_*(\mathit{tmf}^{tC_2})$ is given by
$$E^{s,t}_1 = \mathit{tmf}_t(S^s) \cong \mathit{tmf}_{t-s}(S^0) \Rightarrow \pi_{t}(\mathit{tmf}^{tC_2}).$$
In other words, the $E_1$-page consists of a copy of $\pi_*(\mathit{tmf})$ in each filtration $s$. A picture of these homotopy groups can be found in \cite[``The homotopy groups of $\mathit{tmf}$ and of its localizations"]{DFHH14}. For readability, we have recreated this image (minus some multiplicative extensions) in Figure \ref{Fig:E00}.

For any integer $r>0$, we write $\nu_2(r)$ for its $2$-adic valuation. We only need to compute finitely many differentials on each page in view of the following lemma (which is a consequence of James periodicity):

\begin{lem}
The $d_r$-differentials in the AHSS are periodic with period $(2^{\nu_2(r)},2^{\nu_2(r)})$. 
\end{lem}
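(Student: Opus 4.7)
The plan is to deduce the stated periodicity from classical James periodicity for stunted real projective spaces. Differentials in our AHSS are controlled by the attaching maps in the cellular filtration of $\Sigma P^\infty_{-\infty}$ after smashing with $\mathit{tmf}$. Specifically, the $d_r$-differential out of a class detected on the $s$-cell in stem $t$ depends only on the length-$r$ attaching data connecting the $s$-cell to the $(s+r)$-cell of $\Sigma P^\infty_{-\infty}$, as seen by $\mathit{tmf}$. Thus it suffices to produce, for each $s$, a stable equivalence between the relevant two-cell subcomplex of $\Sigma P^\infty_{-\infty}$ and its translate by $2^{\nu_2(r)}$ cells, compatibly with inclusion into the ambient $\Sigma P^\infty_{-\infty}$.

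First I would invoke James periodicity in the form: multiplication by a tensor power of the tautological line bundle $\lambda$ yields stable equivalences
$$P^{b+m}_{a+m} \simeq \Sigma^m P^b_a$$
whenever $\lambda^m$ is stably trivial on $P^b_a$. Since $\mathit{tmf}$ is $2$-local we may work $2$-locally throughout, and the classical line-bundle calculation on the relevant stunted projective spaces appearing in a $d_r$-differential shows that $m = 2^{\nu_2(r)}$ suffices. Smashing with $\mathit{tmf}$ preserves this equivalence.

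Next I would transport this back to the spectral sequence. Passing to $\pi_*$ and identifying the attaching maps with the boundary operators on the $E_r$-page, the bidegree shift
$$(s,t) \;\longmapsto\; (s + 2^{\nu_2(r)},\, t + 2^{\nu_2(r)})$$
intertwines these boundary operators, which is exactly the $d_r$-periodicity statement.

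The main obstacle is the line-bundle input: verifying that $2^{\nu_2(r)}$ is genuinely sufficient to trivialize $\lambda$ on the length-$r$ subcomplexes relevant to a $d_r$-differential. This calculation is standard and appears in the classical treatment of the AHSS for $\Sigma P^\infty_{-\infty}$ (e.g. in Mahowald's work cited earlier), so I would refer to it rather than reproduce the bundle-theoretic argument. No additional subtleties arise from passing to $\mathit{tmf}$-modules, since smashing with $\mathit{tmf}$ preserves stable equivalences.
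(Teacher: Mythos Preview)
Your approach via James periodicity is exactly what the paper invokes; indeed, the paper's entire proof is the parenthetical ``which is a consequence of James periodicity.'' One slip worth flagging: the $d_r$-differential is not governed by a \emph{two-cell} subcomplex but by the full stunted complex $P^{s}_{s-r}$ with $r+1$ cells (for instance, the $d_3$-differentials in the paper are Toda brackets, which require the intermediate cell), so James periodicity must be applied to that entire stunted complex---your later phrase ``length-$r$ subcomplexes'' is closer to the mark.
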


The $d_1$-differentials are given by 
$$d_1(x[s]) = 2x[s-1], \quad s \equiv 1 \mod 2,$$ 
where our notation is that $\alpha[i]$ is the copy of the class $\alpha$ occurring in Atiyah-Hirzebruch filtration $i$. We obtain the $E_2$-page depicted in Figures \ref{Fig:E20}-\ref{Fig:E21}. In Figures \ref{Fig:E20}-\ref{Fig:E57}, squares and bullets both represent $\f_2$. 

We next determine the $E_3$-page. The $d_2$-differentials are given by
$$d_2(x[s]) = \eta x[s-2], \quad s \equiv 1,2 \mod 4.$$
We obtain the $E_3$-page depicted in Figures \ref{Fig:E30}-\ref{Fig:E33}. 

To compute the $E_4$-page, we have $d_3$-differentials 
$$d_3(x[s]) = \langle x,2,\eta \rangle[s-3], \quad s \equiv 3 \mod 4,$$
$$d_3(x[s]) = \langle x, \eta,2 \rangle[s-3], \quad s \equiv 1 \mod 4.$$
To compute these differentials we will use the Massey products from \cite[Table 16]{Isa14} and Toda brackets from \cite[Table 19]{Isa14}. The unit map 
$$S^0 \to \mathit{tmf}$$
is a map of $E_\infty$-ring spectra and therefore the induced map in homotopy groups preserves all higher structure. In particular, we can use Isaksen's computations to compute Toda brackets of classes in $\mathit{tmf}_*$ as long as those classes are in the Hurewicz image for $\mathit{tmf}$. 

\begin{lem}\cite[Table 19]{Isa14}
The following Toda brackets hold in $\pi_*(S^0)$:
$$\eta^2 \in \langle 2, \eta, 2 \rangle, \quad 2 \nu \in \langle \eta, 2, \eta \rangle,  \quad \epsilon \in \langle \nu^2, 2, \eta \rangle.$$
The following Toda brackets hold in $\mathit{tmf}_*$ for $i \geq 1$:
$$c_4^{i-1} 2c_6 \in \langle c_4^i \eta^2, 2, \eta \rangle.$$
\end{lem}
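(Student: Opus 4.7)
The three sphere-level identities hold in $\pi_*(S^0)_{(2)}$ and are recorded in Isaksen's Table 19. They can be verified independently from the classical $2$-primary Adams spectral sequence for $S^0$: the relevant Massey products in $\Ext_A(\f_2,\f_2)$ (namely $h_1^2 \in \langle h_0, h_1, h_0\rangle$, $h_0 h_2 \in \langle h_1, h_0, h_1\rangle$, and $c_0 \in \langle h_2^2, h_0, h_1\rangle$) are permanent cycles detecting the asserted classes, so Moss's convergence theorem applies.

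For the $\mathit{tmf}$-level statement $c_4^{i-1} 2c_6 \in \langle c_4^i \eta^2, 2, \eta\rangle$, my plan is to reduce to the case $i = 1$ via the $c_4$-linearity of Toda brackets:
$$c_4^{i-1} \cdot \langle c_4 \eta^2, 2, \eta\rangle \;\subseteq\; \langle c_4^i \eta^2, 2, \eta\rangle,$$
so once $2 c_6 \in \langle c_4\eta^2, 2, \eta\rangle$ is established, multiplying by $c_4^{i-1}$ gives the general case. For the base case I would work in the classical Adams spectral sequence converging to $\pi_*(\mathit{tmf})_2^\wedge$, whose $E_2$-page is $\Ext_{A(2)}(\f_2,\f_2)$. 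There $2$ and $\eta$ are detected by $h_0$ and $h_1$, while $c_4$ and $c_6$ are detected by well-known polynomial generators. A direct cobar computation (or a look at the multiplicative structure tabulated in standard references such as \cite{Bru98} and \cite{DFHH14}) produces a Massey product $h_0 c_6 \in \langle c_4 h_1^2, h_0, h_1\rangle$ on $E_2$. Since $h_0 c_6$ is a permanent cycle detecting $2 c_6 \in \pi_{12}(\mathit{tmf})$, Moss's convergence theorem upgrades this Massey product identity to the asserted Toda bracket in $\mathit{tmf}_*$.

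The main technical obstacle is controlling the indeterminacy on both sides: the Massey product indeterminacy on $E_2$, and the possibility that a crossing Adams differential alters which Toda bracket representative is actually detected by $h_0 c_6$. Both are finite checks in the bidegree $(12, s)$ for small $s$, and can be dispatched by inspecting the known list of $\Ext_{A(2)}(\f_2,\f_2)$ generators through internal degree $12$, together with the fact that $h_0 c_6$ has no other permanent cycle competitors in the relevant Adams filtration. Once the base case is verified, the $c_4$-linearity reduction is formal and completes the proof.
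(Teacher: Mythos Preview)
The paper offers no proof of this lemma: the sphere-level brackets are cited from Isaksen's tables, and the $\mathit{tmf}_*$ bracket is asserted without further justification. Your proposal therefore supplies substantially more than the paper does, and the overall strategy is correct. The Massey-product-plus-Moss argument for the three sphere brackets is standard and unproblematic, and for the $\mathit{tmf}_*$ bracket your reduction to $i=1$ via $c_4$-linearity followed by a Massey product computation in $\Ext_{A(2)}$ and Moss convergence is exactly the right shape.

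One notational slip is worth flagging. You write ``$h_0 c_6 \in \langle c_4 h_1^2, h_0, h_1\rangle$'' as though $c_6$ names an element of $\Ext_{A(2)}(\f_2,\f_2)$, but at $p=2$ the class $c_6$ does not lie in $\pi_*(\mathit{tmf})$; only $2c_6$ does. So there is no Ext class literally called $c_6$ whose $h_0$-multiple detects $2c_6$. You should instead name directly the permanent cycle in the $12$-stem of $\Ext_{A(2)}$ that detects $2c_6$ (in the notation of the paper's Theorem~\ref{atmf1} this is the $h_0$-tower element $\alpha\beta$, lying in Adams filtration~$7$), and verify the Massey product for that class. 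Likewise, ``$c_4$'' on the $E_2$-page should be read as the filtration-$4$ class (often written $v_1^4$ or $w_1$) that detects $c_4$. These are purely cosmetic issues: once the correct Ext names are substituted, the cobar verification and the indeterminacy check in bidegree $(12,s)$ go through as you describe, and the argument is complete.
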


\begin{lem}
For $s \equiv 3 \mod 4$, there are nontrivial $d_3$-differentials
$$d_3(\nu^2[s]) = \epsilon[s-3], \quad d_3(\Delta \eta[s]) = \Delta 2 \nu[s-3], \quad d_3(\nu \Delta^2 \nu^2[s]) = \nu \Delta^2 \epsilon[s-3],$$
$$d_3(\nu \Delta^4 \nu[s]) = \epsilon \Delta^4[s-3], \quad d_3(\kappa \Delta^4 \eta \Delta[s]) = \kappa \Delta^4 2 \nu \Delta[s-3], \quad d_3(\nu \Delta^6 \nu^2[s]) = \nu \Delta^6 \epsilon [s-3].$$
For $s \equiv 1 \mod 4$, there are nontrivial $d_3$-differentials
$$d_3(2 \bar{\kappa}^2[s]) = \eta^2 \bar{\kappa}^2[s-3], \quad d_3(\nu \Delta^5 2[s]) = \nu \Delta^5 \eta^2[s-3].$$
\end{lem}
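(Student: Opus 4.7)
The plan is to compute each $d_3$-differential directly from the Toda bracket formulas recorded just above the lemma, then reduce each bracket to one of the three ``fundamental'' brackets
\[
\langle \nu^2, 2, \eta\rangle \ni \epsilon, \quad \langle \eta, 2, \eta\rangle \ni 2\nu, \quad \langle 2, \eta, 2\rangle \ni \eta^2,
\]
whose values in $\pi_*(S^0)$ were just recalled from Isaksen's tables.

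First, for each source class $x$ listed in the lemma I would factor $x = a\cdot y$, where $y \in \{\nu^2, \eta, 2\}$ is one of the inputs of a fundamental bracket and $a$ is a product of Hurewicz-image classes in $\pi_*(\mathit{tmf})$ such as $\Delta$, $\nu$, $\kappa$. The juggling relation
\[
a \cdot \langle y, b, c\rangle \;\subseteq\; \langle a\cdot y,\, b,\, c \rangle,
\]
valid in any ring spectrum up to the usual indeterminacy, places $a$ times the Isaksen representative inside $d_3(x[s])$. Because the unit $S^0 \to \mathit{tmf}$ is an $E_\infty$-map and therefore preserves Toda brackets, the stated targets $\epsilon$, $2\nu$, $\eta^2$ push forward to $\mathit{tmf}$, and scaling by $a$ produces the claimed table entries. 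For instance, $\langle \Delta\eta, 2, \eta\rangle \supseteq \Delta\langle \eta, 2, \eta\rangle \ni 2\Delta\nu$, and the remaining seven cases run identically, with $a$ being $\Delta$, $\nu\Delta^2$, $\nu\Delta^4$, $\kappa\Delta^5$, $\nu\Delta^6$, $\bar\kappa^2$, or $\nu\Delta^5$ as required.

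Second, to see that each differential is genuinely nontrivial I would check two facts against the $E_3$-page drawn in Figures \ref{Fig:E30}--\ref{Fig:E33} and Bauer's computation of $\pi_*(\mathit{tmf})$: that the targeted class was not already killed by a prior $d_1$ or $d_2$, and that it is not contained in the Toda-bracket indeterminacy. Both are finite dimension-counting checks, one per line of the lemma.

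The main technical obstacle is precisely the indeterminacy in the juggling step. Pulling $a$ out of $\langle ay, b, c\rangle$ inserts $a\cdot \langle y, b, c\rangle$ only modulo the bracket indeterminacy $a y \cdot \pi_* \cdot c + a \cdot y \cdot b \cdot \pi_*$, so the targeted element could in principle be absorbed. The delicate cases are $d_3(\Delta\eta[s]) = 2\Delta\nu[s-3]$, where the target is $2$-divisible, and the two $s \equiv 1 \bmod 4$ lines, where the target is $\eta^2$-divisible on a high-filtration class; ruling out absorption here requires confirming directly from the Bauer chart that no class of the appropriate degree in $\pi_*(\mathit{tmf})$ multiplies into the indeterminacy to produce the stated value. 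Since the relevant degrees are sparsely populated and the multiplicative structure around $\Delta^k$ is explicit, these checks are routine but unavoidable.
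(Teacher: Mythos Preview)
Your approach is more explicit than the paper's, which simply cites the chart on \cite[Pages 190--191]{DFHH14}, but it has a real gap. The class $\Delta$ is \emph{not} an element of $\pi_*(\mathit{tmf})$: only $8\Delta$ survives in $\pi_{24}$, and the indecomposable $\{\eta\Delta\}\in\pi_{25}(\mathit{tmf})$ is not literally a product $\Delta\cdot\eta$ in homotopy. Consequently the juggling step $\Delta\langle\eta,2,\eta\rangle\subseteq\langle\Delta\eta,2,\eta\rangle$ is not even defined, and your computation of $d_3(\Delta\eta[s])$ collapses. The same obstruction recurs for $\kappa\Delta^4\eta\Delta$, where you again need to peel off a factor of $\eta$ from an indecomposable. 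A separate problem arises for $d_3(\nu\Delta^4\nu[s])$: your stated $a=\nu\Delta^4$ forces $y=\nu\notin\{\nu^2,\eta,2\}$, and since $\langle\nu,2,\eta\rangle\subseteq\pi_5(S^0)=0$, juggling only places $0$ in the bracket and says nothing about the claimed nontrivial target $\epsilon\Delta^4$.

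For the differentials whose source honestly factors as $a\cdot y$ with $a\in\pi_*(\mathit{tmf})$ and $y\in\{\nu^2,\eta,2\}$ --- e.g.\ $\nu^2$, $\{\nu\Delta^2\}\cdot\nu^2$, $\bar{\kappa}^2\cdot 2$, $\{\nu\Delta^5\}\cdot 2$ --- your argument is correct and is essentially what the DFHH chart encodes. But for the indecomposable sources you need a different input: either compute the corresponding Massey product in the Adams $E_2$-page (where $\Delta$ \emph{is} a class, so the shuffle is legitimate) and invoke Moss convergence, or read the Toda bracket off the hidden extensions displayed in the chart. That is precisely what the paper's one-line citation is appealing to.
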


\begin{proof}
These follow from inspection of \cite[Pages 190-191]{DFHH14}. 
\end{proof}

The $E_4$-page is depicted in Figures \ref{Fig:E40}-\ref{Fig:E43}. Before proceeding to the $E_5$-page, we note that a large portion of the elements in the $E_4$-page survive to the $E_\infty$-page. 

\begin{lem}\label{Lem:J}
Nontrivial elements in the $E_4$-page of the AHSS of the form $c_4^i \eta^j \Delta^k[-N]$ or $2c_6 c_4^i \Delta^k[-N]$ with $i \geq 1$, $1 \leq j \leq 3$, $k \geq 0$, and $N >0$ survive to nontrivial elements in the $E_\infty$-page of the AHSS. 
\end{lem}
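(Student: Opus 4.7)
The plan is to exploit the multiplicativity of the AHSS together with the ring structure of $\pi_*(\mathit{tmf})$ to show that no $d_r$ for $r \geq 4$ either originates from or targets the specified classes. The $d_r$-differential on a class $x[s]$ is given (modulo indeterminacy) by $(x \cdot h(\alpha_r))[s - r]$, where $\alpha_r \in \pi_{r-1}(S^0)$ is an attaching-map class detected in the appropriate filtration of $P^\infty_{-\infty}$ and $h \colon \pi_*(S^0) \to \pi_*(\mathit{tmf})$ is the Hurewicz map. The problem thus reduces to controlling multiplication of $c_4^i \eta^j \Delta^k$ and $2 c_6 c_4^i \Delta^k$ by the Hurewicz image of $\mathit{tmf}$.

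For the ``no outgoing differential'' direction, I would appeal to the known $2$-local Hurewicz image of $\mathit{tmf}$, spanned in positive stems by products of $\eta$, $\nu$, $\epsilon$, $\kappa$, $\bar{\kappa}$, and $2\nu$. The relations $c_4 \cdot \nu = c_4 \cdot \epsilon = c_4 \cdot \kappa = 0$ and $\eta^4 = 0$ in $\pi_*(\mathit{tmf})$ imply that $x \cdot h(\alpha) = 0$ for every such $\alpha$ of positive degree when $x = c_4^i \eta^j \Delta^k$ with $i \geq 1$ and $1 \leq j \leq 3$. For $x = 2 c_6 c_4^i \Delta^k$ the analogous annihilations ($2 \eta = 0$, together with $c_6 \cdot \nu = c_6 \cdot \epsilon = c_6 \cdot \kappa = 0$) suffice. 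The lower-$r$ Massey and Toda bracket contributions governing $d_2$ and $d_3$ have already been enumerated in the preceding analysis, so what remains of the higher differentials is controlled by strict multiplication.

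For the ``no incoming differential'' direction, inspection of the $E_4$-page in the relevant total degree (cf.\ Figures \ref{Fig:E40}--\ref{Fig:E43}) shows that the only candidate sources in Atiyah-Hirzebruch filtration $s + r$ for $r \geq 4$ are themselves of one of the two stated forms, and the previous paragraph forbids these from supporting a nontrivial differential. The main obstacle is the careful bookkeeping in this inspection, particularly keeping track of the $\Delta^k$-shifts and the James-periodicity of differentials with period $(2^{\nu_2(r)}, 2^{\nu_2(r)})$; the combinatorics of which $s$ values are affected by which $\alpha_r$ is the delicate part, while the multiplicative vanishings themselves are routine. A more robust alternative is a dimension count: by Bailey--Ricka, $\pi_*(\mathit{tmf}^{tC_2}) \cong \pi_*\big(\bigvee_{i \in \z} \Sigma^{8i} bo\big)$, so the rank of $E_\infty$ in each total degree is pinned down, and one verifies that the classes in the statement must account for the corresponding $bo$-contributions and hence survive nontrivially.
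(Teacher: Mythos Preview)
Your overall strategy is the same as the paper's: the paper's proof is a single sentence asserting that ``there are no multiplicative or higher multiplicative relations in $\mathit{tmf}_*$ involving these classes which could produce a $d_r$-differential killing them in the AHSS for $r \geq 4$.'' Both of you are appealing to the absence of the relevant products and brackets in $\pi_*(\mathit{tmf})$.

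There is, however, a genuine gap in your detailed argument. You assert that ``what remains of the higher differentials is controlled by strict multiplication,'' i.e.\ that $d_r(x[s]) = (x\cdot h(\alpha_r))[s-r]$ for $r\ge 4$. This is false: in the very same section the paper goes on to compute $d_6(x[s]) = \langle x,\eta,\nu\rangle[s-6]$ (or $\langle x,\nu,\eta\rangle$) and $d_7(x[s]) = \langle x,2,\eta,\nu\rangle[s-7]$, which are genuine Toda brackets, not products. Your listed annihilation relations $c_4\nu = c_4\epsilon = c_4\kappa = 0$ and $2\eta=0$ only guarantee that these Toda brackets are \emph{defined}; they do not by themselves show the brackets vanish. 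The paper's phrase ``higher multiplicative relations'' is precisely meant to cover these brackets, and the (unstated) content of the proof is that no such bracket involving $c_4^i\eta^j\Delta^k$ or $2c_6c_4^i\Delta^k$ is nontrivial in $\pi_*(\mathit{tmf})$. To repair your argument you would need to verify, for instance, that $\langle c_4^i\eta^j\Delta^k,\eta,\nu\rangle$ and $\langle c_4^i\eta^j\Delta^k,2,\eta,\nu\rangle$ contain zero, and similarly for the incoming direction.

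Your alternative route via Bailey--Ricka is sound and in fact more robust than either your primary argument or the paper's: once you know $\pi_*(\mathit{tmf}^{tC_2}) \cong \prod_i \pi_*(\Sigma^{8i}bo)$, a rank count in each total degree forces exactly the classes of the stated form to survive, since they are the only $E_4$-classes available to account for the $bo$-pattern. That argument avoids any delicate bracket analysis.
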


\begin{proof}
There are no multiplicative or higher multiplicative relations in $\mathit{tmf}_*$ involving these classes which could produce a $d_r$-differential killing them in the AHSS for $r \geq 4$. 
\end{proof}

Next we compute the $E_5$-page. The $d_4$-differentials are given by
$$d_4(x[s]) = \nu x [s-4], \quad s \equiv 1,2,3,4 \mod 8.$$ 
The $E_5$-page is depicted in Figures \ref{Fig:E50}-\ref{Fig:E57}. 

There are no possible $d_5$-differentials, since these would either correspond to attaching maps in $\pi_4(S^0) = 0$ or a Toda bracket $\langle x, \nu, 2 \rangle$ which is not defined since $2\nu \neq 0$, or a Toda bracket $\langle x, \eta, 2, \eta \rangle$ which is zero for all $x \in \mathit{tmf}_*$. Therefore we have $E_5 = E_6$.

We now calculate the $E_7$-page. Since $\pi_5(S^0) = 0$, the $d_6$-differentials are given by
$$d_6(x[s]) = \langle x, \eta,\nu \rangle[s-6], \quad s \equiv 5,6 \mod 8,$$
$$d_6(x[s]) = \langle x, \nu, \eta \rangle[s-6], \quad s \equiv 1,2 \mod 8.$$
To compute these differentials, we will use the Massey products from \cite[Table 1]{Bau08} and \cite[Table 19]{Isa14}.

\begin{lem}\cite[Table 1]{Bau08}\cite[Table 19]{Isa14}
The following Toda brackets hold in $\mathit{tmf}_*$:
$$\nu^2 \in \langle \eta, \nu, \eta \rangle, \quad \epsilon \in \langle \nu, \eta, \nu \rangle, \quad \epsilon \in \langle 2\nu, \nu, \eta \rangle, \quad 2 \bar{\kappa} \in \langle \kappa \eta, \eta, \nu \rangle.$$
\end{lem}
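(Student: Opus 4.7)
The proof will be essentially a reference-chasing argument: the claimed brackets are already established in the literature, and they transport to $\mathit{tmf}_*$ via the Hurewicz map $S^0 \to \mathit{tmf}$.

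First, I would record the elementary principle that the unit map $S^0 \to \mathit{tmf}$, being a map of $E_\infty$-ring spectra, sends every Toda bracket in $\pi_*(S^0)$ into the corresponding Toda bracket in $\pi_*(\mathit{tmf})$ (modulo the possibly larger indeterminacy in the target). All of $\eta$, $\nu$, $\epsilon$, $\kappa$, and $\bar{\kappa}$ lie in the Hurewicz image of $\mathit{tmf}$, so each of the three-fold brackets on the left-hand side is the image of the corresponding bracket computed in the sphere.

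Next, I would invoke \cite[Table 19]{Isa14} to obtain the inclusions $\nu^2 \in \langle \eta, \nu, \eta \rangle$, $\epsilon \in \langle \nu, \eta, \nu \rangle$, and $\epsilon \in \langle 2\nu, \nu, \eta \rangle$ in $\pi_*(S^0)$, then push forward along the unit. For the final bracket $2\bar{\kappa} \in \langle \kappa\eta, \eta, \nu \rangle$, I would read it off \cite[Table 1]{Bau08}, where it is listed directly among Bauer's Toda bracket computations in $\pi_*(\mathit{tmf})$; alternatively, one can lift it from $\pi_*(S^0)$ by the same naturality argument.

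The only point that requires verification by hand is that the named representatives are not absorbed by the indeterminacies in $\pi_*(\mathit{tmf})$, which are generally larger than in $\pi_*(S^0)$. This is a quick inspection using Bauer's calculation of $\pi_*(\mathit{tmf})$: for each bracket $\langle a,b,c\rangle$, I would compute the total degree and identify the indeterminacy subgroup $a \cdot \pi_{|b|+|c|+1}(\mathit{tmf}) + \pi_{|a|+|b|+1}(\mathit{tmf}) \cdot c$, then check that the named class is nonzero modulo this subgroup. I expect this indeterminacy check to be the main (and only) substantive step; the relevant low-degree homotopy groups of $\mathit{tmf}$ are known explicitly from \cite{Bau08}, so each case is immediate.
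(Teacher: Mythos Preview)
Your proposal is correct and matches the paper's approach: the paper gives no proof for this lemma at all, simply citing \cite[Table~1]{Bau08} and \cite[Table~19]{Isa14} and relying on the naturality principle (stated a few paragraphs earlier) that the unit $S^0 \to \mathit{tmf}$ preserves Toda brackets. Your added indeterminacy check is more careful than anything the paper does, but is not strictly necessary here since the cited tables already record these brackets (or their images) as holding in $\mathit{tmf}_*$.
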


\begin{lem}
We have the following $d_6$-differentials. For $s \equiv 6 \mod 8$, we have
$$d_6(\kappa \Delta^4 \eta[s]) = 2 \Delta^4 \bar{\kappa}[s-6].$$
For $s \equiv 1 \mod 8$, we have
$$d_6(\kappa \eta[s]) = \kappa \nu^2[s-6], \quad d_6(\nu \Delta^2 2\nu[s]) = \nu \Delta^2 \epsilon[s-6], \quad d_6(\kappa \Delta^4 \eta[s]) = \kappa \Delta^4 \nu^2.$$
\end{lem}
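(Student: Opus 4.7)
The plan is to verify each stated $d_6$-differential by using the general formula for $d_6$ in the AHSS (established at the beginning of the subsection), invoking multiplicativity of Toda brackets to extract the constant coefficient factors, and then recognizing the remaining three-fold Toda bracket as one of the brackets tabulated in the preceding lemma of Bauer and Isaksen.

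First, for $s \equiv 6 \pmod 8$, apply the formula $d_6(x[s]) = \langle x,\eta,\nu\rangle[s-6]$ with $x = \kappa \Delta^4 \eta$. Since $\Delta^4$ is a permanent cycle of even total degree detected by a unit in the relevant Hurewicz image, we can shuffle it out to obtain $\Delta^4 \langle \kappa \eta,\eta,\nu\rangle[s-6]$, and the tabulated relation $2\bar\kappa \in \langle \kappa\eta,\eta,\nu\rangle$ gives the claimed differential $d_6(\kappa\Delta^4\eta[s]) = 2\Delta^4\bar\kappa[s-6]$. For the case $s \equiv 1 \pmod 8$ we instead use $d_6(x[s]) = \langle x,\nu,\eta\rangle[s-6]$. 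Unwinding: $\langle \kappa\eta,\nu,\eta\rangle = \kappa\langle \eta,\nu,\eta\rangle \ni \kappa\nu^2$; $\langle \nu\Delta^2\cdot 2\nu,\nu,\eta\rangle = \nu\Delta^2\langle 2\nu,\nu,\eta\rangle \ni \nu\Delta^2\epsilon$; and $\langle \kappa\Delta^4\eta,\nu,\eta\rangle = \kappa\Delta^4\langle \eta,\nu,\eta\rangle \ni \kappa\Delta^4\nu^2$. Each right-hand side matches the stated target of the differential.

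The main technical issue is the bookkeeping for indeterminacy and for the shuffling operations. One needs to check, in each instance, that (a) the relevant Massey-product shuffle $\langle ab,c,d\rangle \supseteq a\langle b,c,d\rangle$ is well-defined (i.e.\ the inner three-fold bracket exists, with the needed pair-wise products vanishing) and (b) the coset on the right is nonzero modulo the indeterminacy $a\cdot \pi_*\cdot d + \pi_*\cdot\{a,c\}$. The Bauer/Isaksen tables confirm the brackets exist in the sphere or in $\mathit{tmf}_*$, and transport via the $E_\infty$-ring map $S^0 \to \mathit{tmf}$ preserves the relations needed for shuffling. Nontriviality of the AHSS target is then read off from Figures \ref{Fig:E50}--\ref{Fig:E57}, where the named classes sit in the relevant positions of the $E_6 = E_5$-page and have not previously been hit.

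I expect the hardest part to be verifying that the indeterminacies do not subsume the stated differential: for example, ensuring that $\kappa\nu^2$ genuinely represents a nonzero class on the $E_6$-page at the appropriate stem, rather than being absorbed into the indeterminacy of $\langle \kappa\eta,\nu,\eta\rangle$. This reduces, in each case, to a direct inspection of $\pi_*(\mathit{tmf})$ in the relevant degree using the chart in \cite{DFHH14}, together with the observation that the previously computed $d_r$ for $r < 6$ do not already kill the source class. Once these checks pass, the four differentials follow immediately from the bracket identifications above.
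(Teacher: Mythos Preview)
Your proposal is correct and follows the same approach as the paper: the paper does not include an explicit proof of this lemma, but the surrounding text makes clear that the differentials are meant to be read off directly from the $d_6$ formulas $d_6(x[s]) = \langle x,\eta,\nu\rangle[s-6]$ (for $s\equiv 5,6\bmod 8$) and $d_6(x[s]) = \langle x,\nu,\eta\rangle[s-6]$ (for $s\equiv 1,2\bmod 8$) together with the Toda brackets tabulated in the preceding lemma, exactly as you do. Your additional remarks on shuffling and indeterminacy are a reasonable elaboration of what the paper leaves implicit.
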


As the $E_7$-page is fairly similar to the $E_5$-page, we leave the construction of its charts as an exercise for the reader.

We conclude with the $E_8$-page. The possible $d_7$-differentials are given by
$$d_7(x[s]) = \langle x, 2,\eta,\nu \rangle[s-7], \quad s \equiv 7 \mod 8,$$
$$d_7(x[s]) = \langle x, \nu, \eta, 2 \rangle[s-7], \quad s \equiv 1 \mod 8.$$
These follow from the Toda bracket $\bar{\kappa} \in \langle \kappa, 2, \eta, \nu \rangle$ in $\mathit{tmf}_*$ \cite[Table 1]{Bau08}.

We then have the following list of $d_7$-differentials.

\begin{lem}
For $s \equiv 7 \mod 8$, there are nontrivial $d_7$-differentials
$$d_7(\kappa[s]) = \bar{\kappa}[s-7], \quad d_7(\eta \Delta \kappa[s]) = \eta \Delta \bar{\kappa}[s-7], \quad d_7(\kappa \Delta^4 \eta[s]) = \eta\Delta^4 \bar{\kappa}[s-7], \quad d_7(\kappa \Delta^4 \kappa[s]) = \kappa \Delta^4 \bar{\kappa}[s-7].$$
\end{lem}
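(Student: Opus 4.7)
The plan is to unpack each asserted $d_7$-differential as the four-fold Toda bracket $\langle x,2,\eta,\nu\rangle$ dictated by the attaching map formula already set up before the lemma, and then to identify that bracket with the indicated nonzero class in $\mathit{tmf}_*$ by reducing to the single input bracket $\bar\kappa\in\langle\kappa,2,\eta,\nu\rangle$ coming from Bauer's table.

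First, I would note that all four source classes carry a factor of $\kappa$, so the brackets in question are of the form $\langle y\kappa,2,\eta,\nu\rangle$ with $y\in\{1,\eta\Delta,\Delta^4\eta,\Delta^4\kappa\}$. Since $y$ consists of permanent cycles in $\mathit{tmf}_*$ with no relations to the $(2,\eta,\nu)$-tower, the juggling identity
\[
\langle y\kappa,2,\eta,\nu\rangle \;\supseteq\; y\cdot\langle\kappa,2,\eta,\nu\rangle
\]
applies and produces $y\bar\kappa$ as a representative. So the content of the lemma reduces, modulo the target-page verification, to the single Bauer bracket $\bar\kappa\in\langle\kappa,2,\eta,\nu\rangle$ already cited.

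Second, I would verify nontriviality in two directions: (a) the target $y\bar\kappa[s-7]$ is a nonzero class still alive on the $E_7$-page (it is not hit by any earlier differential because the only classes in the requisite bidegrees have been tracked through the page-by-page analysis of Sections on $E_2$ through $E_7$, and nothing new acts on a $\bar\kappa$-multiple on pages $\leq 6$ in these stems); and (b) the indeterminacy of $\langle\kappa,2,\eta,\nu\rangle$ does not wash out $\bar\kappa$. The indeterminacy has the form $\kappa\cdot\pi_{12}(\mathit{tmf})+\pi_{19}(\mathit{tmf})\cdot\nu$, and inspection of $\pi_{12}(\mathit{tmf})$ and $\pi_{19}(\mathit{tmf})$ from \cite{DFHH14} shows this indeterminacy is strictly contained in the subgroup where $\bar\kappa$ is nonzero. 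Multiplying through by $y$ preserves this nonvanishing.

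Third, the only step with any real content is justifying that the $d_7$-differential is computed by $\langle x,2,\eta,\nu\rangle$ when $s\equiv 7\mod 8$, which is exactly the attaching-map formula set up just before the lemma statement. Given that setup, the rest is pattern-matching: apply the formula to each of the four classes and simplify using Bauer's bracket. The main obstacle I anticipate is bookkeeping the indeterminacies of the Toda brackets for the multiplied cases $y\kappa$—in particular checking that $y$ is ``far enough'' in Adams filtration and sparse enough in its stem that $y\cdot\langle\kappa,2,\eta,\nu\rangle$ and $\langle y\kappa,2,\eta,\nu\rangle$ agree modulo the indeterminacy needed to conclude the differential value—but since each $y$ lies in a well-understood portion of $\pi_*(\mathit{tmf})$ described in \cite{DFHH14}, this is a finite check rather than a conceptual difficulty.
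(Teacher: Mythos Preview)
Your proposal is correct and follows essentially the same approach as the paper: the paper's entire argument is the single sentence ``These follow from the Toda bracket $\bar{\kappa} \in \langle \kappa, 2, \eta, \nu \rangle$ in $\mathit{tmf}_*$'' (citing Bauer), and you have simply unpacked this by making the juggling step $\langle y\kappa,2,\eta,\nu\rangle \supseteq y\cdot\langle\kappa,2,\eta,\nu\rangle$ explicit and adding the nontriviality and indeterminacy checks that the paper leaves implicit. One small imprecision: the indeterminacy of a four-fold bracket is larger than $\kappa\cdot\pi_{12}+\pi_{19}\cdot\nu$ (it also contains three-fold bracket contributions), but in these stems of $\mathit{tmf}_*$ this does not affect the conclusion.
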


The $E_8$-page is depicted in a range in Appendix \ref{App:AHSS}.

\begin{rem2}
We have only listed differentials on elements in $\pi_{\leq 191}\mathit{tmf}$. The remaining differentials are determined by the rule $d_r(\Delta^8 x) = \Delta^8 d_r(x)$ which follow from the fact that $\Delta^8 \in \pi_{192}(\mathit{tmf})$ is nonzero. 
\end{rem2}

\begin{landscape}

\begin{figure}
\centering
\includegraphics[scale=.75,trim={3cm 15cm 1cm 2cm},clip]{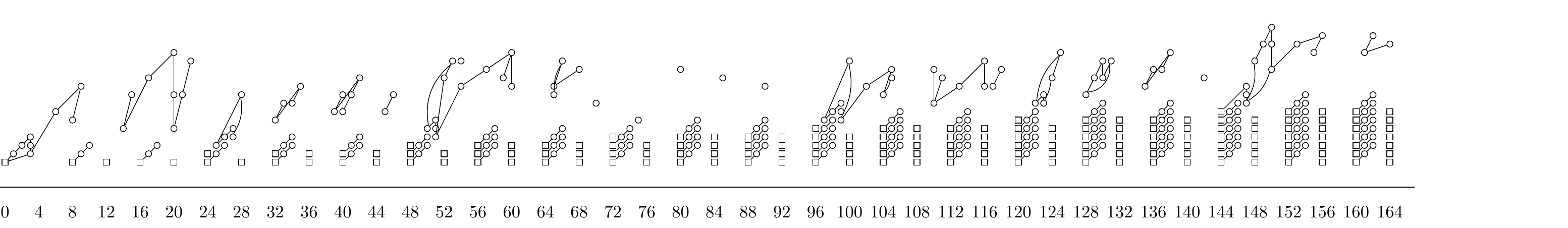}
\caption{The $E_1$-page of the AHSS for $\mathit{tmf}^{tC_2}$ in any filtration $s \in \z$}\label{Fig:E00}
\end{figure}

\begin{figure}
\centering
\includegraphics[scale=.75,trim={3cm 15cm 1cm 2cm},clip]{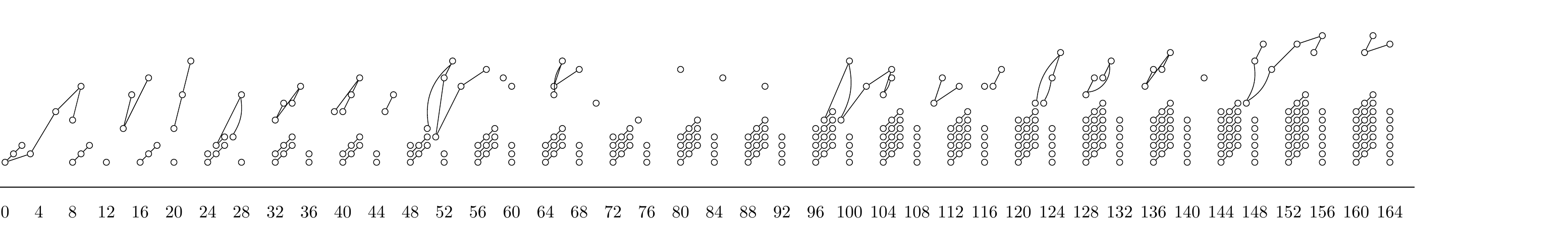}
\caption{The $E_2$-page of the AHSS for $\mathit{tmf}^{tC_2}$ in filtration $s \equiv 0 \mod 2$}\label{Fig:E20}
\end{figure}

\begin{figure}
\centering
\includegraphics[scale=.75,trim={3cm 15cm 1cm 2cm},clip]{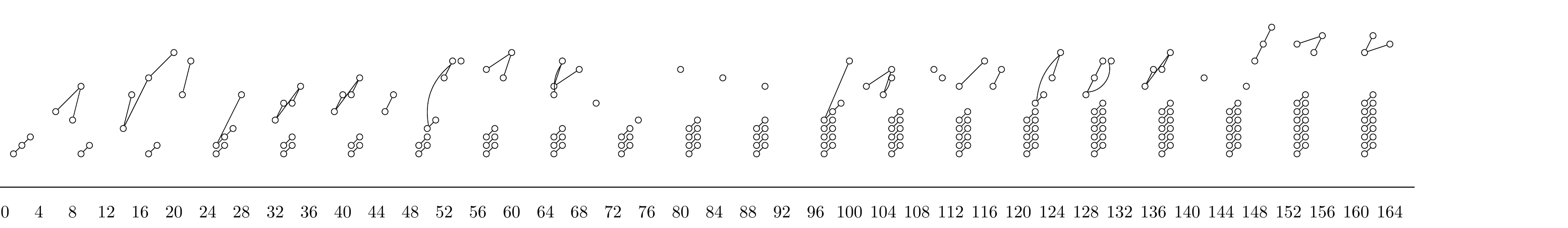}
\caption{The $E_2$-page of the AHSS for $\mathit{tmf}^{tC_2}$ in filtration $s \equiv 1 \mod 2$}\label{Fig:E21}
\end{figure}

\begin{figure}
\centering
\includegraphics[scale=.75,trim={3cm 15cm 1cm 2cm},clip]{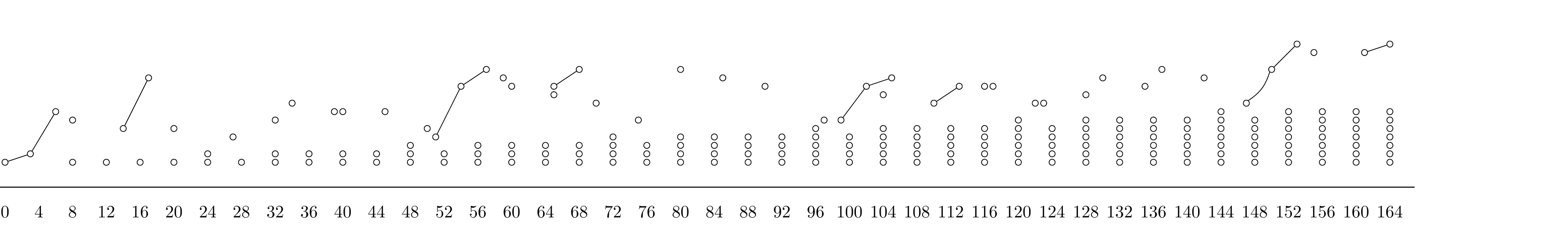}
\caption{The $E_3$-page of the AHSS for $\mathit{tmf}^{tC_2}$ in filtration $s \equiv 0 \mod 4$}\label{Fig:E30}
\end{figure}

\begin{figure}
\centering
\includegraphics[scale=.75,trim={3cm 15cm 1cm 2cm},clip]{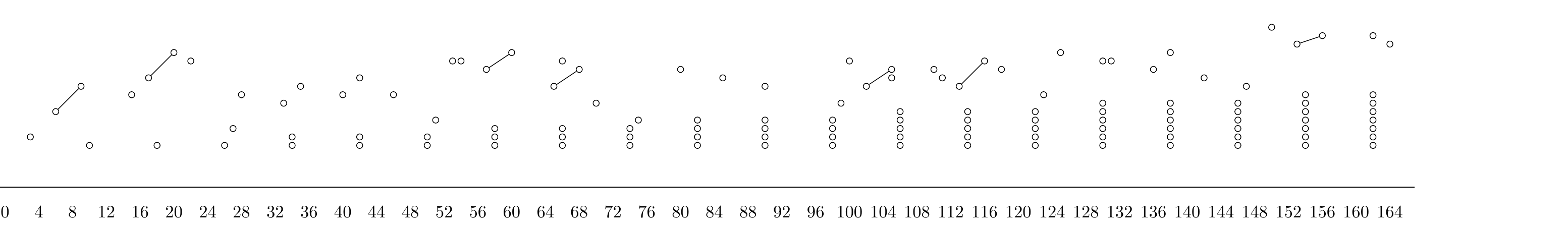}
\caption{The $E_3$-page of the AHSS for $\mathit{tmf}^{tC_2}$ in filtration $s \equiv 1 \mod 4$}\label{Fig:E31}
\end{figure}

\begin{figure}
\centering
\includegraphics[scale=.75,trim={3cm 15cm 1cm 2cm},clip]{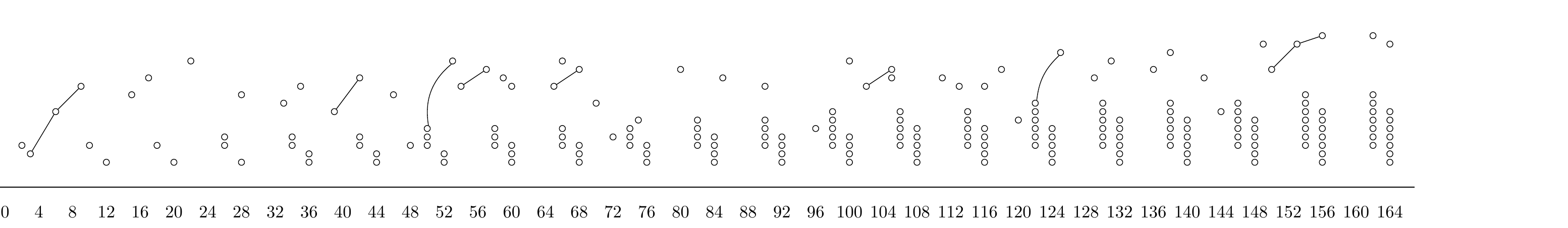}
\caption{The $E_3$-page of the AHSS for $\mathit{tmf}^{tC_2}$ in filtration $s \equiv 2 \mod 4$}\label{Fig:E32}
\end{figure}

\begin{figure}
\centering
\includegraphics[scale=.75,trim={3cm 15cm 1cm 2cm},clip]{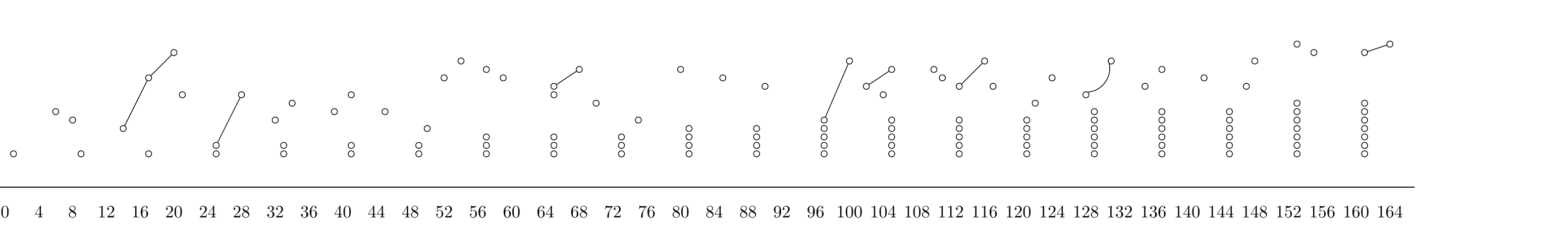}
\caption{The $E_3$-page of the AHSS for $\mathit{tmf}^{tC_2}$ in filtration $s \equiv 3 \mod 4$}\label{Fig:E33}
\end{figure}

\begin{figure}
\centering
\includegraphics[scale=.75,trim={3cm 15cm 1cm 2cm},clip]{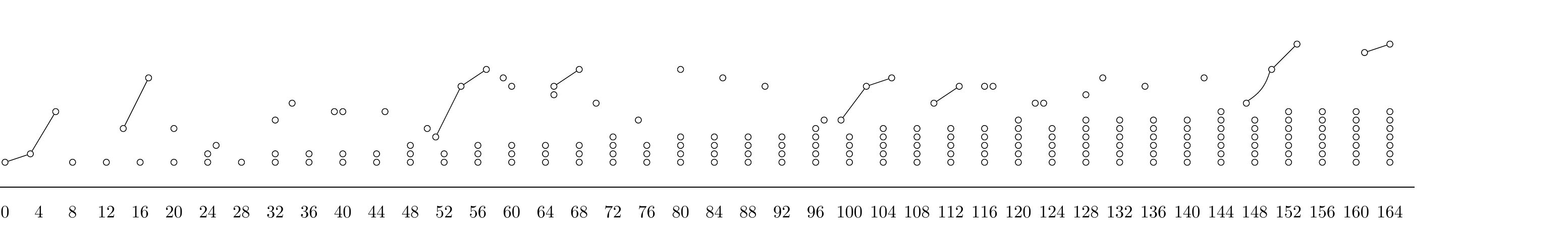}
\caption{The $E_4$-page of the AHSS for $\mathit{tmf}^{tC_2}$ in filtration $s \equiv 0 \mod 4$}\label{Fig:E40}
\end{figure}

\begin{figure}
\centering
\includegraphics[scale=.75,trim={3cm 15cm 1cm 2cm},clip]{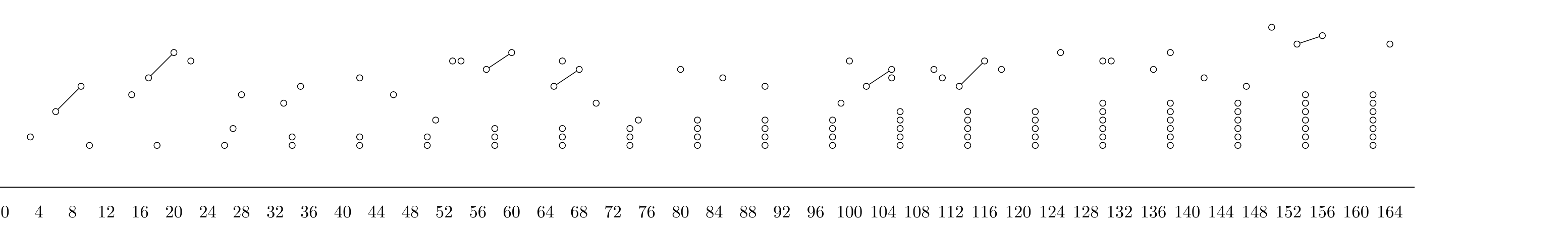}
\caption{The $E_4$-page of the AHSS for $\mathit{tmf}^{tC_2}$ in filtration $s \equiv 1 \mod 4$}\label{Fig:E41}
\end{figure}

\begin{figure}
\centering
\includegraphics[scale=.75,trim={3cm 15cm 1cm 2cm},clip]{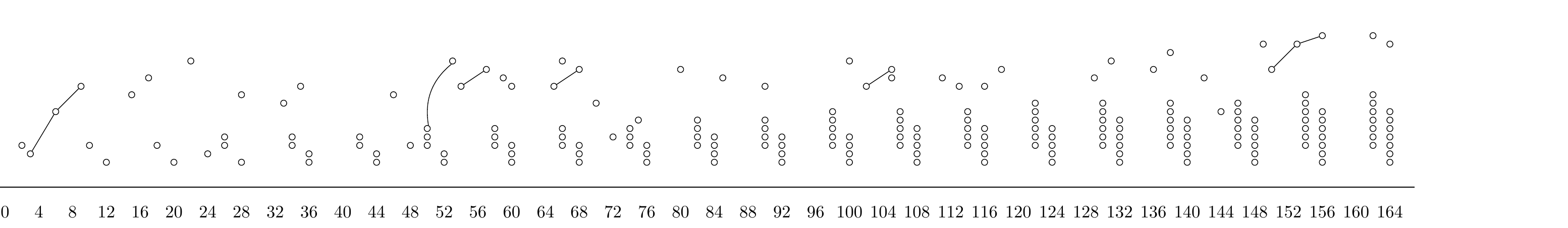}
\caption{The $E_4$-page of the AHSS for $\mathit{tmf}^{tC_2}$ in filtration $s \equiv 2 \mod 4$}\label{Fig:E42}
\end{figure}

\begin{figure}
\centering
\includegraphics[scale=.75,trim={3cm 15cm 1cm 2cm},clip]{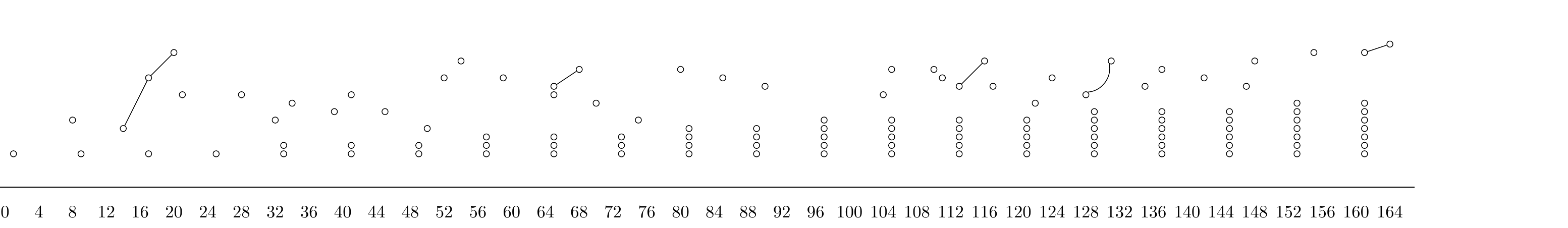}
\caption{The $E_4$-page of the AHSS for $\mathit{tmf}^{tC_2}$ in filtration $s \equiv 3 \mod 4$}\label{Fig:E43}
\end{figure}

\begin{figure}
\centering
\includegraphics[scale=.75,trim={3cm 15cm 1cm 2cm},clip]{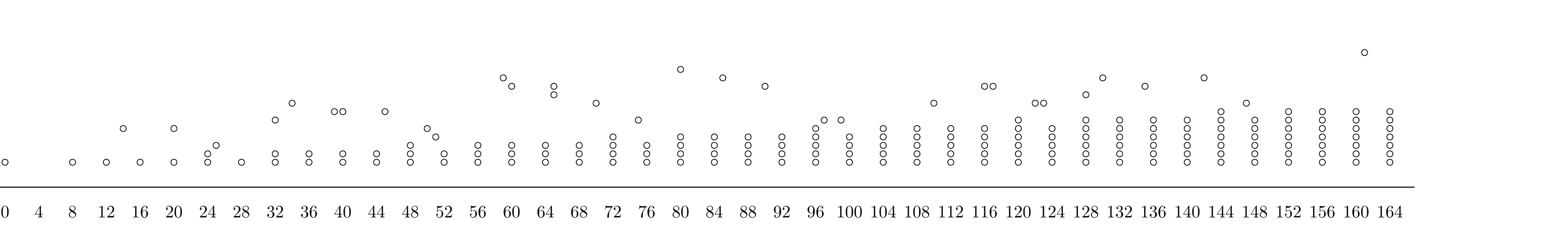}
\caption{The $E_5$-page of the AHSS for $\mathit{tmf}^{tC_2}$ in filtration $s \equiv 0 \mod 8$}\label{Fig:E50}
\end{figure}

\begin{figure}
\centering
\includegraphics[scale=.75,trim={3cm 15cm 1cm 2cm},clip]{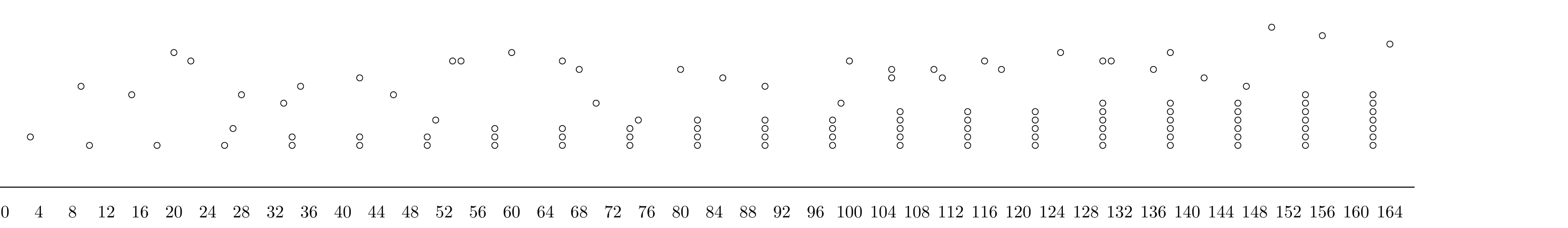}
\caption{The $E_5$-page of the AHSS for $\mathit{tmf}^{tC_2}$ in filtration $s \equiv 1 \mod 8$}\label{Fig:E51}
\end{figure}

\begin{figure}
\centering
\includegraphics[scale=.75,trim={3cm 15cm 1cm 2cm},clip]{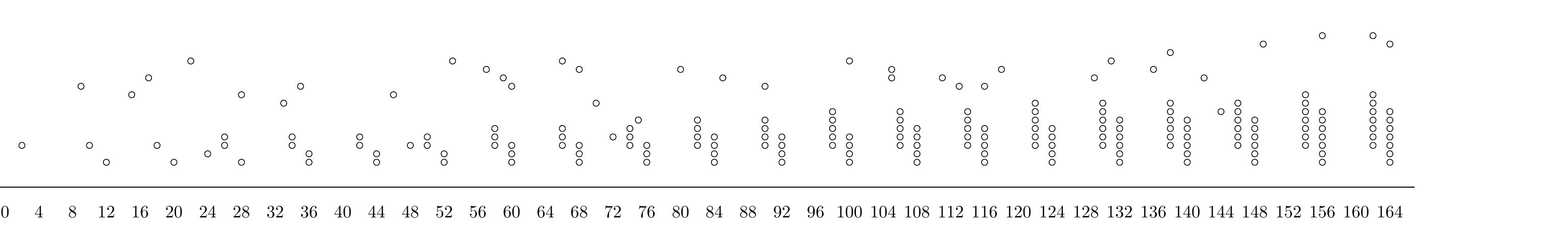}
\caption{The $E_5$-page of the AHSS for $\mathit{tmf}^{tC_2}$ in filtration $s \equiv 2\mod 8$}\label{Fig:E52}
\end{figure}

\begin{figure}
\centering
\includegraphics[scale=.75,trim={3cm 15cm 1cm 2cm},clip]{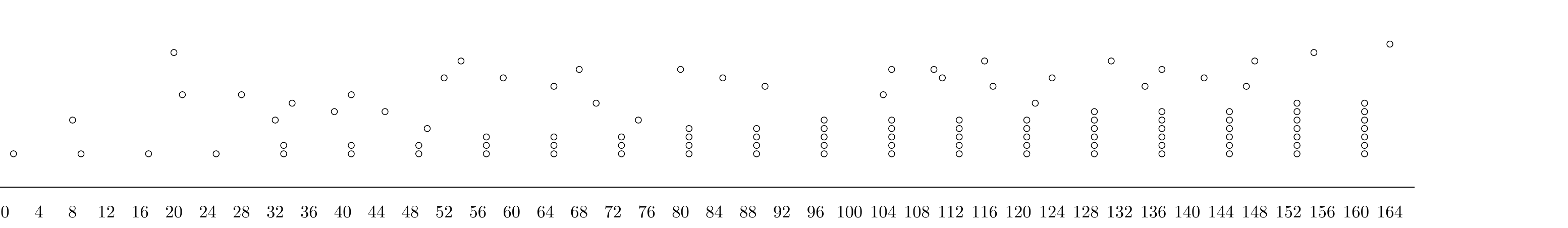}
\caption{The $E_5$-page of the AHSS for $\mathit{tmf}^{tC_2}$ in filtration $s \equiv 3 \mod 8$}\label{Fig:E53}
\end{figure}

\begin{figure}
\centering
\includegraphics[scale=.75,trim={3cm 15cm 1cm 2cm},clip]{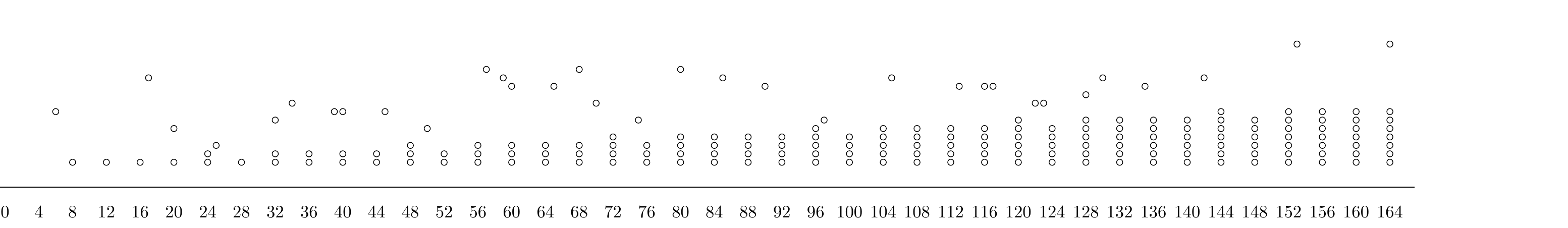}
\caption{The $E_5$-page of the AHSS for $\mathit{tmf}^{tC_2}$ in filtration $s \equiv 4 \mod 8$}\label{Fig:E54}
\end{figure}

\begin{figure}
\centering
\includegraphics[scale=.75,trim={3cm 15cm 1cm 2cm},clip]{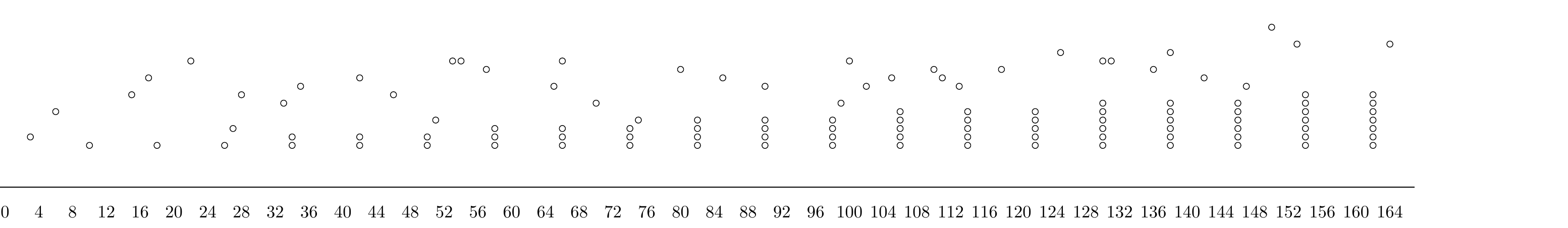}
\caption{The $E_5$-page of the AHSS for $\mathit{tmf}^{tC_2}$ in filtration $s \equiv 5 \mod 8$}\label{Fig:E55}
\end{figure}

\begin{figure}
\centering
\includegraphics[scale=.75,trim={3cm 15cm 1cm 2cm},clip]{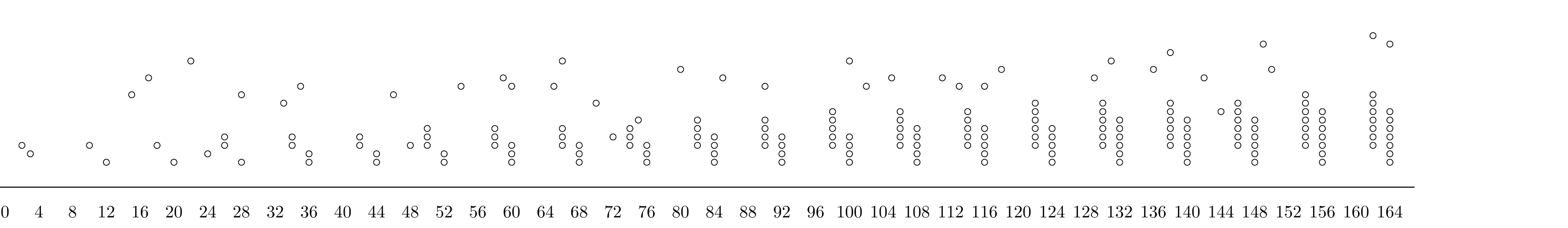}
\caption{The $E_5$-page of the AHSS for $\mathit{tmf}^{tC_2}$ in filtration $s \equiv 6 \mod 8$}\label{Fig:E56}
\end{figure}

\begin{figure}
\centering
\includegraphics[scale=.75,trim={3cm 15cm 1cm 2cm},clip]{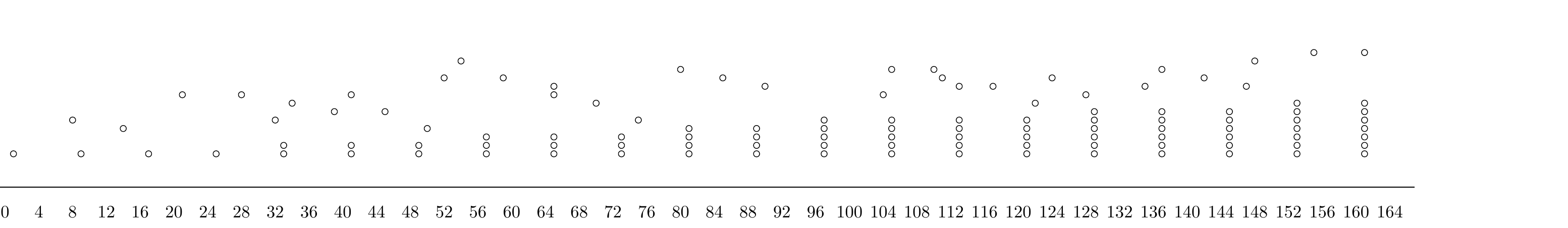}
\caption{The $E_5$-page of the AHSS for $\mathit{tmf}^{tC_2}$ in filtration $s \equiv 7 \mod 8$}\label{Fig:E57}
\end{figure}
\end{landscape}

\section{Algebraic $E$-based Mahowald invariants via the Koszul spectral sequence}\label{SectionAlg}
In this section, we recall Davis and Mahowald's computation of the algebraic $\mathit{tmf}$-based Mahowald invariants which will serve as our starting point for calculating $\mathit{tmf}$-based Mahowald invariants in Section \ref{SectionFiltered}. Let $M$ be an $A(n)$-module. Define $R_n := P^\otimes(\xi_1^{2^n}, \xi_2^{2^{n-1}},\ldots,\xi^2_n,\xi_{n+1})$ to be the tensor-algebra on the above generators, and let $R^\sigma_n$ denote the submodule consisting of monomials of length $n$. The \emph{Koszul spectral sequence} \cite[Theorem 2.8]{MS87} has the form
$$E^{\sigma,s,t}_1 = Ext^{s-\sigma,t}_{A(n-1)}(R^\sigma_n \otimes M) \Rightarrow Ext^{s,t}_{A(n)}(M).$$
This spectral sequence was used by Davis and Mahowald \cite{DM82} to compute $Ext_{A(2)}(H^*(RP^\infty_N))$ for all $N \in \z$ and by Mahowald and Shick \cite{MS87} to define the chromatic filtration of the $E_2$-page of the ASS. It also appears as the ``Davis-Mahowald spectral sequence" in recent work of Shick \cite{Shi19}. 

\subsection{The Koszul spectral sequence for $Ext_{A(1)}^{**}(H^*(\Sigma P^\infty_N))$}
In this subsection, we demonstrate the Koszul spectral sequence in a simple case. We include many details to give the reader an idea of how to understand the computations in \cite{DM82}, which we will cite but not reprove in the sequel. The first Koszul spectral sequence we consider has the form
$$E^{\sigma,s,t}_1 = Ext^{s-\sigma,t}_{A(0)}(P^\otimes(\xi^2_1,\xi_2)^\sigma) \Rightarrow Ext^{s,t}_{A(1)}(\f_2).$$

We begin by calculating the $E_1$-page. Let $R^\sigma_1 = P(\xi^2_1,\xi_2)^\sigma$. 

\begin{lem}\label{dm1}
When $\sigma$ is even, there is an isomorphism
$$Ext^{*-\sigma,*}_{A(0)}(R^\sigma_1) =  \left(\bigoplus_{i = 0}^{\sigma-1} \f_2[2i] \right) \oplus \z[\sigma]$$
where the functor $(-)[k]$ shifts elements in bidegree $(s,t)$ into bidegree $(s,t+k)$. When $\sigma$ is odd, there is an isomorphism 
$$Ext^{*-\sigma, *}_{A(0)}(R^\sigma_1) = \bigoplus_{i=0}^{\sigma}\f_2[2i].$$
\end{lem}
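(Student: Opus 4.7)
The approach is to pin down the $A(0)$-module structure on $R^\sigma_1 = P^\otimes(\xi_1^2,\xi_2)^\sigma$ and then compute $Ext^{*,*}_{A(0)}(R^\sigma_1,\f_2)$ by decomposing into $A(0)$-indecomposables. As an $\f_2$-vector space, $R^\sigma_1$ has a basis given by weight-$\sigma$ monomials in $\xi_1^2$ (internal degree $2$) and $\xi_2$ (internal degree $3$), whose internal degrees are easy to enumerate.

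The $A(0)$-action on $R^\sigma_1$ is inherited from $A(1)_*$ via its coproduct: one has $\Delta(\xi_1^2) = \xi_1^2 \otimes 1 + 1 \otimes \xi_1^2$ so $\xi_1^2$ is primitive, while $\Delta(\xi_2) = \xi_2 \otimes 1 + \xi_1^2 \otimes \xi_1 + 1 \otimes \xi_2$ has a cross-term $\xi_1^2 \otimes \xi_1$ which is what makes $\xi_2$ interact with the $A(0)$-action. Extending by the Leibniz rule gives $Sq^1$ on each tensor monomial explicitly. The idea is then to decompose $R^\sigma_1$ as a direct sum of shifted copies of the two indecomposable $A(0)$-modules: the trivial module $\f_2$ (contributing an $h_0$-tower to $Ext$, i.e., a $\f_2[k]$-piece) and the free module $A(0)$ (contributing a single class at bidegree $(0,k)$, i.e., a $\z[k]$-piece). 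The parity dichotomy in the statement reflects a combinatorial feature of how $Sq^1$ pairs basis monomials: for $\sigma$ even, exactly one free $A(0)$-summand survives (giving the $\z[\sigma]$), while for $\sigma$ odd the free pieces absorb one another and only trivial $\f_2$-summands remain.

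A natural implementation is induction on $\sigma$, splitting $R^\sigma_1 \cong (R^{\sigma-1}_1 \otimes \xi_1^2) \oplus (R^{\sigma-1}_1 \otimes \xi_2)$ and applying the Leibniz rule to transport the $A(0)$-decomposition from $R^{\sigma-1}_1$. Computing $Ext^{*,*}_{A(0)}$ of each indecomposable is then standard, using $Ext^{*,*}_{A(0)}(\f_2,\f_2) = \f_2[h_0]$ and $Ext^{*,*}_{A(0)}(A(0),\f_2) = \f_2$ concentrated in bidegree $(0,0)$, with all bidegree shifts read off from the internal degrees of the generators of the summands.

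\textbf{Main obstacle.} The principal difficulty is the explicit $A(0)$-module decomposition of $R^\sigma_1$, together with verifying the internal-degree shifts $2i$ and $\sigma$ claimed in the formula. The parity dichotomy has to emerge cleanly from the inductive step: the $Sq^1$-action on the tensor factor $\xi_2$ must produce exactly one ``extra'' free summand when the total weight becomes even and exactly one ``extra'' trivial summand when it becomes odd. This bookkeeping --- both of the pairing pattern under $Sq^1$ and of the accompanying bidegree shifts --- is the step I would expect to require the most care.
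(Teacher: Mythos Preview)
Your approach is essentially the paper's: compute $Sq^1$ on the generators from the coaction, extend by the Leibniz rule, decompose $R^\sigma_1$ into free and trivial $A(0)$-summands, and read off $Ext$ by induction on $\sigma$. The paper makes this concrete by indexing basis monomials as monotone strings in $\{1,2\}$ of length $\sigma$ (with $1 \leftrightarrow \xi_1^2$, $2 \leftrightarrow \xi_2$) and observing that $Sq^1$ pairs the monomial with $2k+1$ entries of $2$ to the one with $2k$ entries.

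One point to correct: you have the $\f_2$/$\z$ labels reversed. A trivial $A(0)$-summand $\f_2$ contributes an $h_0$-tower $Ext_{A(0)}(\f_2)=\f_2[h_0]$, and this is the $\z[k]$ piece in the lemma (it detects a copy of $\z$). A free summand $A(0)$ contributes a single class $\f_2$ in bidegree $(0,k)$, and these are the $\f_2[2i]$ pieces. Correspondingly, your description of the parity dichotomy is inverted: for $\sigma$ even the top monomial $\xi_2^{\otimes\sigma}$ is a $Sq^1$-cycle not in the image of $Sq^1$ (since $Sq^1(\xi_2^{\otimes 2k})=0$ in characteristic $2$), so it gives the lone \emph{trivial} $\f_2$ summand and hence the $\z[\sigma]$; all other monomials pair into free $A(0)$'s. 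For $\sigma$ odd every basis monomial participates in a $Sq^1$-pair, so $R^\sigma_1$ is $A(0)$-free and $Ext$ consists entirely of isolated $\f_2$'s. With this swap in place your plan matches the paper exactly.
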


\begin{proof}
The $A(0)$-module $R^1_1$ consists of classes $[\xi^2_1]$ and $[\xi_2]$. The $A(0)_*$-coaction on $\xi_2$ is restricted from the $A_*$-coaction, so we have
$$\psi(\xi_2) = \xi_1 \otimes \xi^2_1 + 1 \otimes \xi_2,$$
from which we conclude $Sq^1(\xi_2) = \xi_1^2.$ Therefore 
$$R^1_1 \cong \Sigma^2 A(0) \cong H^*(\Sigma^2V(0)),$$
and $Ext_{A(0)}(R^1_1)$ is just a shifted copy of $\f_2$. 

The $A(0)$-module $R^2_1$ consists of classes $[\xi^2_1 \otimes \xi^2_1]$, $[\xi^2_1 \otimes \xi_2]$, and $[\xi_2 \otimes \xi_2]$. By the same argument as above, we see that $Sq^1([\xi^2_1 \otimes \xi_2]) = [\xi^2_1 \otimes \xi^2_1]$, and $Sq^1[\xi_2 \otimes \xi_2]=0$. Therefore we conclude 
$$R^2_1 \cong \Sigma^4 A(0) \oplus \Sigma^6 \f_2 \cong H^*(\Sigma^4 V(0) \wedge S^6),$$
and $Ext_{A(0)}(R^2_1)$ is a shifted copy of $\f_2$ and a shifted copy of $\z$. 

More generally, the elements of $R_1^\sigma$ are in bijective correspondence with monotone increasing strings $(1, 1, \ldots, 2, 2)$ of length $\sigma$. Elements with $2i$ entries of `2' are related by $Sq^1$ to elements with $2i+1$ entries of `2'. The result then follows easily by induction on $\sigma$. 
\end{proof}

We now calculate differentials. The $d_1$-differentials in the Koszul spectral sequence are obtained by dualizing the differential in the Koszul resolution (see e.g. \cite[Sec. 2]{MS87}). Using this fact, or by comparing to other calculations of $Ext_{A(1)}^{**}(\f_2)$, we obtain the pattern of differentials depicted in Figure \ref{Fig:KoszulSSF2}. 

\begin{figure}
\centering
\includegraphics[trim={2in 7.5in 3in 1.5in},clip,scale=.8]{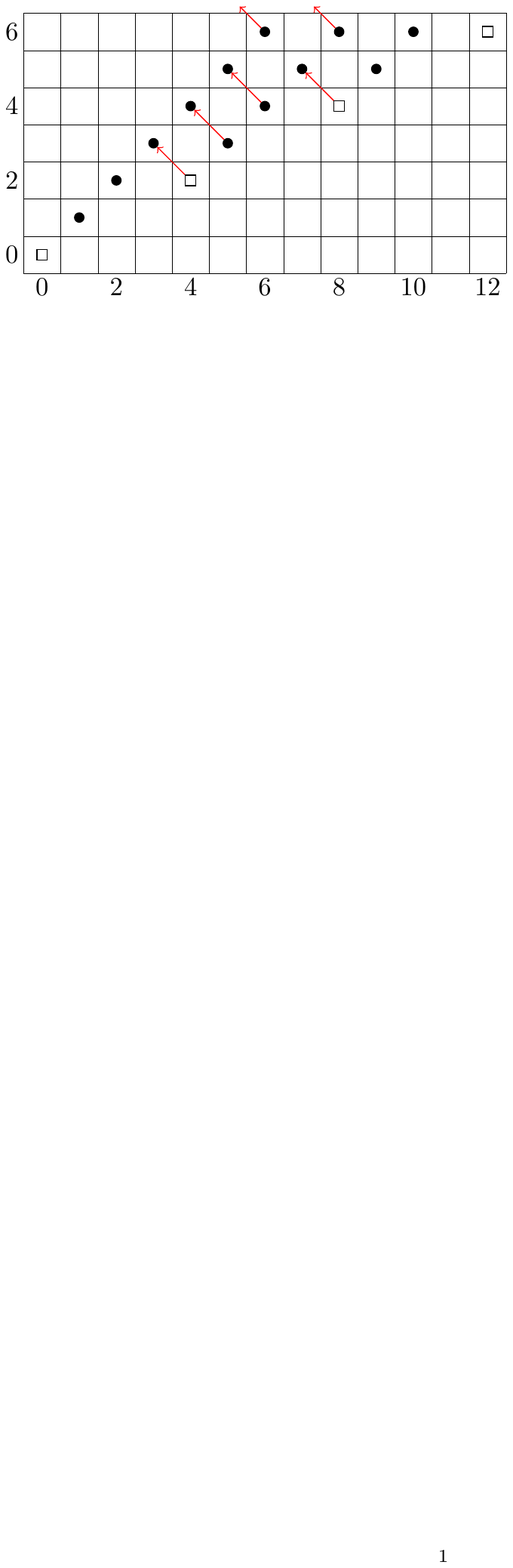}
\caption{The Koszul spectral sequence for $Ext_{A(1)}^{**}(\f_2)$. A $\square$ represents $\z$ and a $\bullet$ represents $\f_2$. }\label{Fig:KoszulSSF2}
\end{figure}

We can use the Koszul spectral sequence to compute $Ext_{A(1)}(H^*(\Sigma P^\infty_N))$ for all $N \in \z$. In this case, it takes the form
$$E_1^{\sigma,s,t} = Ext^{s,t}_{A(0)}(R_1^\sigma \otimes H^*(\Sigma P^\infty_N)) \Rightarrow Ext^{\sigma+s,t}_{A(1)}(H^*(\Sigma P^\infty_N)).$$ 
Recall that as an $A(0)$-module, $H^*(\Sigma P^\infty_N)$ is $2$-periodic with respect to $N$. In particular, we have isomorphisms of $A(0)$-modules
$$H^*(\Sigma P^\infty_{2N-1}) \cong \bigoplus_{i \geq N} \Sigma^{2i} H^*(V(0)),$$
$$H^*(\Sigma P^\infty_{2N}) \cong \Sigma^{2N+1} H^*(pt) \oplus \bigoplus_{i \geq N} \Sigma^{2i+2} H^*(V(0)).$$
The first splitting shows that in order to understand the Koszul spectral sequence for $H^*(\Sigma P^\infty_{2N-1})$, it suffices to analyze the Koszul spectral sequence for $V(0)$. The second splitting shows that it suffices to understand the spectral sequence for $H^*(V(0))$ and $H^*(S^0)$. 

For $H^*(S^0)$, the Koszul spectral sequence was already analyzed. For $H^*(V(0))$, we recall that $H^*(V(0)) \cong A(0)$ so we may apply change-of-rings to see that
$$Ext^{**}_{A(0)}(R_1^\sigma \otimes_{\f_2} H^*(V(0))) \cong Ext^{**}_{\f_2}(R_1^\sigma).$$
The differentials can again be calculated from the definition or from comparison with existing calculations of $Ext_{A(1)}^{**}(H^*(V(0)))$; we leave the details to the reader. 

Altogether, this gives us sufficient information to calculate $Ext^{**}_{A(1)}(H^*(\Sigma P^\infty_{N}))$ for all $N \in \z$. We include a picture of $Ext_{A(1)}^{**}(H^*(\Sigma P^\infty_{-9}))$ in Figure \ref{Fig:KoszulSSP9} as an example. 

\begin{figure}
\centering
\includegraphics[trim={1.5in 7in 3in 1.5in},clip,scale=.8]{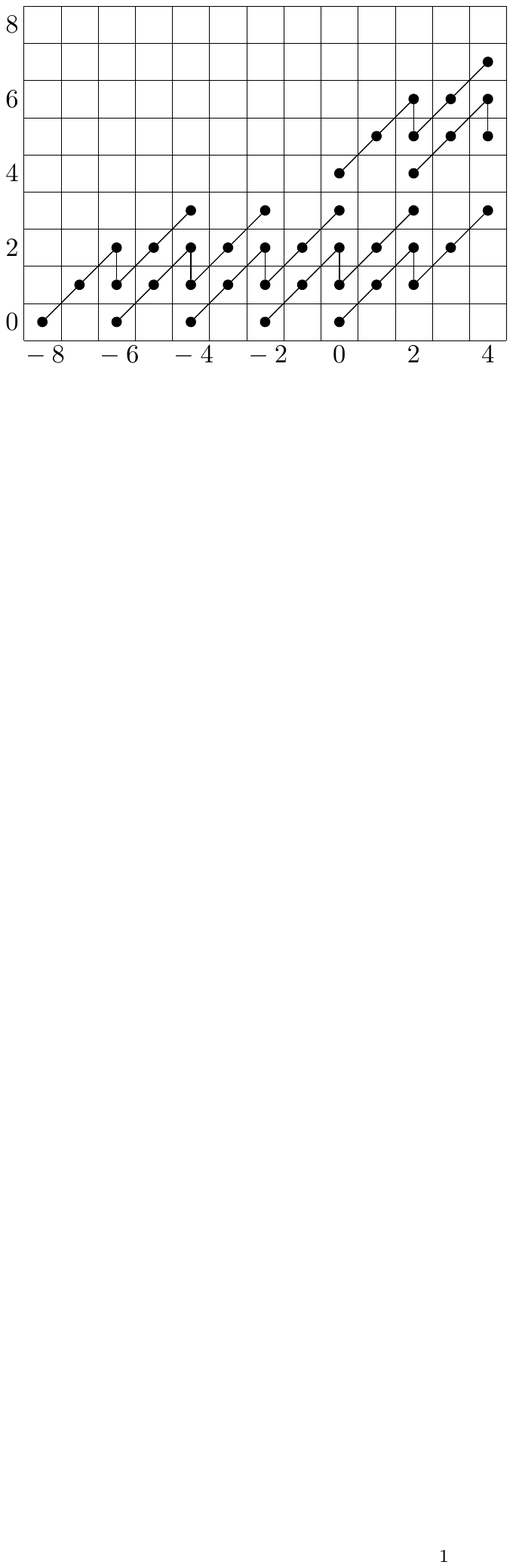}
\caption{The $E_\infty$-page of the Koszul spectral sequence for $Ext_{A(1)}^{**}(H^*(\Sigma P^\infty_{-9}))$. A $\bullet$ represents $\f_2$.}\label{Fig:KoszulSSP9}
\end{figure}

\subsection{Algebraic $E$-based Mahowald invariants}
Let $\alpha \in Ext^{s,t}_{A(n)}(H^*(\Sigma P^\infty_{-\infty}))$. Applying $Ext_{A(n)}^{**}(H^*(-))$ to Diagram \ref{Eqn:MI} produces a diagram
\[
\begin{tikzcd}
Ext^{s,t}_{A(n)}(\f_2) \arrow{d} \arrow[dashed,r,"M^{alg}_E(-)"] & Ext^{s,t+N-1}_{A(n)}(\f_2) \arrow{d} \\
Ext^{s,t}_{A(n)}(H^*(\Sigma P^\infty_{-\infty})) \arrow{r}{\nu_N} &Ext^{s,t}_{A(n)}(H^*(P^\infty_{-N})).
\end{tikzcd}
\]
where $N>0$ is minimal such that the left-hand composite is nontrivial (compare with \cite[Def. 3.3]{Beh06}). If $E$ is a spectrum with cohomology $A\modmod A(n)$, then the coset of lifts $\gamma \in Ext^{s,t-N-1}_{A(n)}(\f_2)$ of the element $\nu_N \circ f(\alpha)$ is the \emph{algebraic $E$-based Mahowald invariant of $\alpha$}. Recall that $H^*(bo) \cong A\modmod A(1)$ and $H^*(\mathit{tmf}) \cong A\modmod A(2)$, so we may define the algebraic $bo$-based and algebraic $\mathit{tmf}$-based Mahowald invariants. 

Let $n=1,2$. Elementary calculations with the Steenrod algebra and the $A$-module structure of $H^*(\Sigma P^\infty_{-\infty})$ show that as an $A(n)$-module, $H^*(\Sigma P^\infty_{-\infty})$ splits as
$$H^*(\Sigma P^\infty_{-\infty}) \cong \bigoplus_{i \in \z} \Sigma^{i 2^{n+1}} A(n)\modmod A(n-1).$$
By change-of-rings, we then have
$$Ext^{s,t}_{A(n)}(H^*(\Sigma P^\infty_{-\infty})) \cong \bigoplus_{i \in \z} \Sigma^{i 2^{n+1}} Ext_{A(n-1)}(\f_2)$$
so $M_{alg}^E(\alpha)$ is defined for any $\alpha \in Ext^{**}_{A(n-1)}(\f_2)$. 

The analysis in the previous section determined the bottom left term in the diagram for all $N\in \z$ when $n=1$. We can therefore compute the algebraic $bo$-based Mahowald invariants by inspection of the relevant $Ext$-groups and analysis of the order of $2$-torsion in $Ext^{s,s}_{A(1)}(H^*(P^\infty_{-2n-1}))$. Since these calculations are similar to the calculations of the homotopical $bo$-based Mahowald invariants of $2^i$ which were computed by Mahowald and Ravenel in \cite{MR93}, we leave the details to the reader. The result is given in the following theorem, where $h_1$ is the generator of $Ext^{1,2}_{A(1)}(\f_2) \cong \f_2$, $\alpha$ is the generator of $Ext^{3,7}_{A(1)}(\f_2) \cong \f_2$, and $\beta$ is the generator of $Ext^{4,12}_{A(1)}(\f_2) \cong \f_2$.  

\begin{thm}
The algebraic $bo$-based Mahowald invariant is determined by the relation $M^{alg}_{bo}(h^4_0 x) = \beta M^{alg}_{bo}(x)$ and the table 
\begin{center}
\begin{tabular}{ |c|c|} 
 \hline
$x$ & $M^{alg}_{bo}(x)$ \\
\hline
$1$ & $1$ \\
$h_0$ & $h_1$ \\
$h^2_0$ & $h^2_1$ \\
$h^3_0$ & $\alpha$ \\
 \hline
\end{tabular}
\end{center}
\end{thm}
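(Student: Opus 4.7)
The plan is to reduce everything to inspection of the Koszul spectral sequence computations already available in the previous subsection, then mimic the Mahowald--Ravenel analysis of the homotopical $bo$-based Mahowald invariants of $2^i$.

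First I would unpack the $A(1)$-module splitting
\[
H^*(\Sigma P^\infty_{-\infty}) \;\cong\; \bigoplus_{i\in\z}\Sigma^{4i}\,A(1)\modmod A(0),
\]
together with the change-of-rings isomorphism
$Ext^{**}_{A(1)}(A(1)\modmod A(0))\cong Ext^{**}_{A(0)}(\f_2)$, and observe that the $4$-fold shift on the left corresponds on the right to multiplication by the generator $\beta \in Ext^{4,12}_{A(1)}(\f_2)$ once one passes back to the sphere through the truncation maps $\nu_N$. This gives the periodicity relation
$M^{alg}_{bo}(h_0^4 x)=\beta\,M^{alg}_{bo}(x)$ for free, so the whole computation reduces to the four base cases $x=1,h_0,h_0^2,h_0^3$.

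Next I would work through the base cases one at a time. For $x=1$ the minimal $N$ is $1$: the unit in $Ext^{0,0}_{A(1)}(\f_2)$ hits the bottom cell of $\Sigma P^\infty_{-1}\simeq S^0$, and the lift is $1$. For $x=h_0, h_0^2, h_0^3$ the recipe is: push the class into $Ext^{**}_{A(1)}(H^*(\Sigma P^\infty_{-\infty}))$ through the unit, and then, using the description of $H^*(\Sigma P^\infty_{-N})$ and the Koszul calculations of $Ext^{**}_{A(1)}(H^*(\Sigma P^\infty_{-N}))$ summarized in the previous subsection (e.g.\ Figure~\ref{Fig:KoszulSSP9}), determine the smallest $N>0$ for which $\nu_N$ sends the image to a nonzero class. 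The $h_0$-tower in $Ext^{**}_{A(1)}(\f_2)$ has length exactly $3$ before it is truncated by $h_0h_1=0$, $h_0h_1^2=0$, and $h_0\alpha=0$; this is what forces $h_0^i$ to first be detected on the cells in Atiyah--Hirzebruch filtrations producing $h_1$, $h_1^2$, and $\alpha$ respectively, exactly as in the corresponding homotopical argument of \cite{MR93}.

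Finally I would read the lifts off the Koszul spectral sequence pictures: the class detecting $h_0^i$ in $Ext^{i,*}_{A(1)}(H^*(\Sigma P^\infty_{-N}))$ lies on a shifted copy of the Koszul--resolved $Ext_{A(0)}(\f_2)$, and inspection of bidegrees identifies the unique preimage in $Ext^{**}_{A(1)}(\f_2)$ as $h_1$, $h_1^2$, and $\alpha$ respectively. The main obstacle, and the only step that needs real care rather than bookkeeping, is identifying the correct $N$ for $x=h_0^3$: one must check that the map $\nu_N$ does not already hit $\alpha$ (or a class equivalent to it modulo higher filtration) for any smaller $N$, which amounts to verifying the vanishing of certain entries of $Ext^{3,10}_{A(1)}(H^*(\Sigma P^\infty_{-N}))$ for the range $N<8$. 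This is read off the Koszul spectral sequence calculations cited from \cite{DM82}, and once done the four base cases combined with the $\beta$-periodicity give the table.
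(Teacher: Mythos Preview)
Your proposal is essentially the same approach the paper takes: the paper also reduces to inspection of the Koszul spectral sequence calculations of $Ext_{A(1)}(H^*(\Sigma P^\infty_{-N}))$ from the previous subsection, phrases the key step as ``analysis of the order of $2$-torsion in $Ext^{s,s}_{A(1)}(H^*(P^\infty_{-2n-1}))$'' (i.e.\ finding the minimal $N$), and explicitly defers the details to the reader by analogy with Mahowald--Ravenel's homotopical computation in \cite{MR93}. One small expository slip: the $h_0$-tower on $1$ in $Ext_{A(1)}(\f_2)$ is infinite, not of length three---what you want is that $h_1$, $h_1^2$, and $\alpha$ are $h_0$-torsion, which is the relevant fact for lifting along $\nu_N$.
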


\begin{rem2}
These algebraic $bo$-based Mahowald invariants agree with the (non-algebraic) $bo$-based Mahowald invariants $M_{bo}(2^i)$ after replacing $x \in Ext_{A(1)}$ by the element which it detects in the ASS for $bo_*$. The $bo$-based Mahowald invariants can be lifted to spherical Mahowald invariants using the $bo$-filtered Mahowald invariant \cite[Section 7]{Beh06}. 
\end{rem2}

\subsection{Algebraic $\mathit{tmf}$-based Mahowald invariants}
We now summarize David and Mahowald's analysis of the Koszul spectral sequence for $Ext_{A(2)}^{**}(\f_2)$ and $Ext_{A(2)}^{**}(H^*(P^\infty_N))$. The first of these has the form
$$E^{\sigma,s,t}_1 = Ext_{A(1)}(P(\xi^4_1,\xi^2_2,\xi_3)^\sigma) \Rightarrow Ext_{A(2)}(\f_2).$$
In \cite[Proposition 5.4]{DM82}, they compute the $E_1$-term of this spectral sequence by computing $Ext_{A(1)}(R^\sigma_2)$ using the same ideas as in our proof of Lemma \ref{dm1}. They then show that $d_1$ is $h_2$- and $v^8_2$-linear, and arrive at a closed formula for the $d_1$-differential \cite[Theorem 5.6]{DM82}. Finally, they show that $E_2 = E_\infty$. The chart is depicted in \cite[Theorem 2.6]{DM82}. One may compare their calculations to those of Iwai-Shimada \cite{IS67} or the machine calculations depicted in \cite[Figure 6.1]{BHHM08}. 

After understanding $\f_2$, they turn to $H^*(\Sigma P^\infty_N)$. Recall that as an $A(1)$-module, $H^*(\Sigma P^\infty_N)$ decomposed into infinitely many suspensions of $A(1)\modmod A(0)$, and as an $A(2)$-module, $H^*(\Sigma P^\infty_N)$ decomposed into infinitely many suspensions of $A(2)\modmod A(1)$. In fact, $H^*(\Sigma P^\infty_{N})$ may be decomposed as an $A(2)$-module in several different ways. 

\begin{exm}
Bailey and Ricka define $L_0$ to be the spectrum $(S^1 \cup_2 e^2 \cup_\eta e^4 \cup_\nu e^8)_+$, and they construct a cofiber sequence of $\mathit{tmf}$-modules
$$\Sigma^{-8k} \mathit{tmf} \wedge P^\infty_{-1} \to \mathit{tmf} \wedge P^\infty_{-1-8k} \to \Sigma^{-8k} \mathit{tmf} \wedge L_0.$$
This cofiber sequence plays a fundamental role in their proof of the spectral splitting of $\mathit{tmf}^{tC_2}$ \cite[Theorem 1.1]{BR19}. 
\end{exm}

Davis and Mahowald use a different splitting than Baily and Ricka which we outline now. They begin by defining $M_i$ (for $i=0,1,3,4,7$) to be the smallest $A(2)$-module in which $Sq^ig_0 \neq 0$, where $g_0$ is a class in degree zero. For example, $H^*(L_0) \cong \Sigma M_7 \oplus \f_2$. They then use the Koszul spectral sequence to compute $Ext_{A(2)}(M_i)$. These $Ext$-groups are presented in \cite[Theorems 2.6-2.9]{DM82} and can also be computed using $Ext$-calculators of Bruner \cite{Bru93} or Perry \cite{Per13}. 

Given these $Ext$-groups, one can then compute $Ext_{A(2)}(H^*(P^\infty_N))$ for $N$ odd using exact sequences relating $P^\infty_N$ to $A(2)$-modules whose cohomology is known. For example, the exact sequence 
$$0 \to H^*(P^\infty_1) \to Q \to \Sigma^{-9} M_7 \oplus \Sigma^{-1} \f_2 \to 0$$
is used to compute $Ext_{A(2)}(H^*(P^\infty_1))$. Here, $Q$ is the quotient of $H^*(P^\infty_{-\infty})$ by the $A(2)$-module generated by classes in degree less than $-9$. We note that $Q$ appears in the calculation of $Ext_{A(2)}(H^*(P^\infty_N))$ for all $N$ odd. Once one has these calculations for $N$ odd, one can compute $Ext_{A(2)}(H^*(P^\infty_N))$ for $N$ even using the exact sequences
$$0 \to H^*(P^\infty_N) \to H^*(P^\infty_{N-1}) \to \Sigma^{N-1} \f_2 \to 0 \quad \text{and} \quad 0 \to H^*(P^\infty_{N+1}) \to H^*(P^\infty_N) \to \Sigma^{N} \f_2 \to 0.$$
By the above discussion, we have 
$$Ext_{A(2)}(Q) \cong \bigoplus_{i \geq -1} Ext_{A(1)}(\Sigma^{8i-1} \f_2).$$
Let $f : Ext_{A(1)}(\f_2) \to Ext_{A(2)}(Q)$ denote the obvious inclusion. In computing $Ext_{A(2)}(H^*(P^\infty_N))$, one completely determines the image of $Ext_{A(1)}(\f_2)$ inside of $Ext_{A(2)}(H^*(P^\infty_N))$, including the minimal $N<0$ where the image of a class $\alpha \in Ext_{A(1)}(\f_2)$ is first nontrivial under $f_*$. This leads to the following theorem.

\begin{thm}\cite[Theorem 3.5]{DM82}\label{atmf1}
The algebraic $\mathit{tmf}$-based Mahowald invariant is determined by the relations $M^{alg}_{\mathit{tmf}}(h^4_0 x) = v^4_1 M^{alg}_{\mathit{tmf}}(x)$ and $M^{alg}_{\mathit{tmf}}(\beta^2 x) = v^8_2 M^{alg}_{\mathit{tmf}}(x)$ and
\begin{center}
\begin{tabular}{ |c|c|} 
 \hline
$x$ & $M^{alg}_{\mathit{tmf}}(x)$ \\
\hline
$1$ & $1$ \\
$h_0$ & $h_1$ \\
$h_0^2$ & $h^2_1$ \\
$h^3_0$ & $h^3_1$ \\
$h_1$ & $h_2$ \\
\hline
\end{tabular}
\begin{tabular}{ |c|c|}
 \hline
$x$ & $M^{alg}_{\mathit{tmf}}(x)$ \\
\hline
$h_1^2$ & $h^2_2$ \\
$\alpha$ & $a$ \\
$\alpha h_0$ & $d$ \\
$\alpha h^2_0$ & $dh_1$ \\
$\alpha h^3_0$ & $eh^2_0$ \\
\hline
\end{tabular}
\begin{tabular}{ |c|c|}
 \hline
$x$ & $M^{alg}_{\mathit{tmf}}(x)$ \\
\hline 
$\beta$ & $g$ \\
$\beta h_0$ & $gh_1$ \\
$\beta h^2_0$ & $a^2$ \\
$\beta h^3_0$ & $ad$ \\
$\beta h_1$ & $v^4_2 h_1$ \\
 \hline
\end{tabular}
\begin{tabular}{ |c|c|}
 \hline
$x$ & $M^{alg}_{\mathit{tmf}}(x)$ \\
\hline
$\beta h^2_1$ & $v^4_2 h^2_2$ \\
$\beta \alpha$ & $v^4_2 c$\\
$\alpha \beta h_0$ & $dg$ \\
$\alpha \beta h^2_0$ & $a^3$ \\
$\alpha \beta h^3_0$ & $v^4_1 v^4_2 h^2_2$ \\
\hline
\end{tabular}
\end{center}
\end{thm}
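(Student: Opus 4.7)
The plan is to run the machinery already set up in this section: compute the relevant $Ext$-groups via the Koszul spectral sequence, splice them together with exact sequences of $A(2)$-modules built from $H^*(\Sigma P^\infty_N)$, and then read off the algebraic $\mathit{tmf}$-based Mahowald invariants by tracking the image of the map $f \colon Ext_{A(1)}(\f_2) \to Ext_{A(2)}(Q)$ inside $Ext_{A(2)}(H^*(P^\infty_N))$ as $N$ descends.

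First I would assemble the three families of input $Ext$-groups: (a) $Ext_{A(2)}(\f_2)$ itself, via the Koszul spectral sequence $E_1^{\sigma,s,t}=Ext_{A(1)}(R_2^\sigma)\Rightarrow Ext_{A(2)}(\f_2)$, using the $h_2$- and $v_2^8$-linearity of $d_1$ recorded in \cite[Thm.~5.6]{DM82} and the collapse at $E_2$ to fix the generators $h_1, h_2, a, c, d, e, g, v_2^4$ in the target; (b) $Ext_{A(2)}(M_i)$ for $i=0,1,3,4,7$ from \cite[Thms.~2.6--2.9]{DM82}, which provide the building blocks that appear in the $A(2)$-module filtration of $H^*(\Sigma P^\infty_N)$; and (c) $Ext_{A(2)}(Q)$, which by the change-of-rings isomorphism $Ext_{A(2)}(A(2)\modmod A(1)\otimes W) \cong Ext_{A(1)}(W)$ and the $A(2)$-splitting of $Q$ reduces to a direct sum of shifted copies of $Ext_{A(1)}(\f_2)$; this is what makes the map $f$ available and pins down which $Ext_{A(1)}$-classes we are transporting.

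Next, for each class $x$ in the table, I would identify the minimal $N<0$ such that the image of $x$ in $Ext_{A(2)}(H^*(P^\infty_N))$ is nontrivial. Concretely, start with $x=1$ and work outward, using the short exact sequences
\[
0 \to H^*(P^\infty_1) \to Q \to \Sigma^{-9} M_7 \oplus \Sigma^{-1} \f_2 \to 0,
\]
\[
0 \to H^*(P^\infty_N) \to H^*(P^\infty_{N-1}) \to \Sigma^{N-1}\f_2 \to 0, \qquad 0 \to H^*(P^\infty_{N+1}) \to H^*(P^\infty_N) \to \Sigma^N\f_2 \to 0,
\]
to descend from $N=1$ to the relevant negative $N$; the connecting homomorphisms tell us exactly when a class lifted from $Q$ ceases to be in the kernel of $\nu_N$, and by the defining diagram the lift in $Ext_{A(2)}(\f_2)$ is then the algebraic $\mathit{tmf}$-based Mahowald invariant. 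The two periodicity relations $M^{alg}_{\mathit{tmf}}(h_0^4 x)=v_1^4 M^{alg}_{\mathit{tmf}}(x)$ and $M^{alg}_{\mathit{tmf}}(\beta^2 x)=v_2^8 M^{alg}_{\mathit{tmf}}(x)$ reduce the infinite list to the finite table: the first reflects the fact that multiplication by $h_0^4$ on the source side corresponds, under $f$ and $\nu_N$, to a shift of $N$ by $8$ and produces $v_1^4$ on the target, while the second reflects the $A(2)$-level James-style periodicity encoded in the $v_2^8$-linearity of the Koszul $d_1$.

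The main obstacle is the identification step on the target side: the $Ext_{A(2)}(\f_2)$-class that the image of $x$ lifts to is constrained only up to the indeterminacy of the coset $\nu_N^{-1}(f_*(x))$, and distinguishing, say, $a$ from $d h_0$ or $g$ from $a^2 h_0^{-2}$ requires keeping track of internal degrees, $h_0$- and $h_1$-divisibilities, and the hidden relations $h_1 a = h_2 c$, $h_2 d = h_0 g$, etc.\ that govern $Ext_{A(2)}(\f_2)$. In practice one handles this by tabulating the bidegrees of the conjectured outputs against the bidegrees forced by the position of the lifted class, and then using $h_0$-naturality together with the Koszul spectral sequence computations of $Ext_{A(2)}(M_i)$ to rule out alternative names; this is where Davis and Mahowald's detailed chart-level analysis in \cite{DM82} is essential and where I would expect to spend the bulk of the effort.
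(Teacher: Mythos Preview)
Your proposal is correct and follows essentially the same approach the paper describes: the paper does not prove this theorem itself but cites it as \cite[Theorem 3.5]{DM82} and summarizes Davis and Mahowald's argument via the Koszul spectral sequence for $Ext_{A(2)}(\f_2)$, the modules $M_i$, and the exact sequences relating $H^*(P^\infty_N)$ to $Q$ and the $M_i$. Your outline matches that summary almost point for point, including the identification of the periodicity relations with $v_1^4$- and $v_2^8$-linearity and the recognition that the delicate step is naming the target classes in $Ext_{A(2)}(\f_2)$ up to the coset indeterminacy.
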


\section{$\mathit{tmf}$-based Mahowald invariants}\label{SectionFiltered}
We now apply our partial computation of the AHSS for $\pi_*(\mathit{tmf}^{tC_2})$ to compute the $\mathit{tmf}$-based Mahowald invariants in Theorem \ref{tmfbasedMI}. In Section \ref{Sec:Prelim}, we recall the definition of the $E$-based Mahowald invariant and the filtered Mahowald invariant. We then define the $H$-filtered $\mathit{tmf}$-based Mahowald invariant and record several results needed for our calculations. To help the reader navigate our calculations, we provide a pseudo-algorithm for calculating $\mathit{tmf}$-based Mahowald invariants in Section \ref{Sec:Pseudoalgorithm}. In Section \ref{Sec:v1perMI}, we calculate several $v_1$-periodic $\mathit{tmf}$-based Mahowald invariants which allow us to eliminate many classes in high Adams filtration from consideration in our $v_2$-periodic calculations. In Section \ref{Sec:v1i}, we calculate the  $\mathit{tmf}$-based Mahowald invariants $M_{\mathit{tmf}}(M_{bo}(2^i))$ for all $i \geq 1$. These results are assembled in Theorem \ref{tmfbasedMI}. 

\subsection{Preliminaries}\label{Sec:Prelim}

We start by recalling the $E$-based Mahowald invariant and the filtered Mahowald invariant. We then combine these to define the filtered $\mathit{tmf}$-based Mahowald invariant. 

\subsubsection{$E$-based Mahowald invariants}
We begin with $E$-based Mahowald invariants. 

\begin{defin}
Let $\alpha \in \pi_t(E^{tC_2})$. The \emph{$E$-based Mahowald invariant} of $\alpha$ is the coset of completions of the diagram
\[
\begin{tikzcd}
S^t \arrow{d}{\alpha} \arrow[dashed,rr,"M_{E}(\alpha)"] & & \Sigma^{-N+1} E \arrow{d} \\
E^{tC_2} \arrow{r}{\simeq} & \underset{\leftarrow}{\lim} \Sigma P^\infty_{-N} \wedge E \arrow{r} &  \Sigma P^\infty_{-N} \wedge E
\end{tikzcd}
\]
where $N>0$ is minimal so that the left-hand composite from $S^t$ to $\Sigma P^\infty_{-N} \wedge E$ is nontrivial. 
\end{defin}

\begin{rem2}
The $S^0$-based Mahowald invariant is the ordinary Mahowald invariant defined in the Introduction. We will sometimes refer to this as the \emph{spherical} Mahowald invariant in the sequel. 
\end{rem2}

\begin{rem2}
We can enlarge the diagram above to define $M_{\mathit{tmf}}(M_{bo}(\alpha))$ as the coset of dashed arrows:
\[
\begin{tikzcd}
bo^{tC_2} \arrow{d} & S^t \arrow{l}{\alpha} \arrow{d}{M_{bo}(\alpha)} \arrow[r,dashed,"M_{\mathit{tmf}}(M_{bo}(\alpha))"] & \Sigma^{-N-M+2} \mathit{tmf} \arrow{dd} \\
\Sigma P^\infty_{-N} & \Sigma^{-N+1} bo \arrow{l} \arrow{d} & \\
& \Sigma^{-N+1} \mathit{tmf}^{tC_2} \arrow{r} & \Sigma^{-N+2}P^\infty_{-M} \wedge \mathit{tmf}
\end{tikzcd}
\]
\end{rem2}

As we were unable to completely calculate the AHSS for $\mathit{tmf}^{tC_2}$, we cannot compute $\mathit{tmf}$-based Mahowald invariants immediately from the definition since we cannot determine the minimal $N$ for which any element $\alpha \in \pi_*(\mathit{tmf}^{tC_2})$ is detected. Instead, we will compute the $\mathit{tmf}$-based Mahowald invariant using the filtered Mahowald invariant machinery set up by Behrens in \cite{Beh06}. 

\subsubsection{Filtered Mahowald invariants}
The filtered Mahowald invariant is a sequence of approximations which arise from combining the Adams filtration of $\pi_*(S^0)$ and the Atiyah-Hirzebruch filtration of $\Sigma RP^\infty_{-\infty}$. We will not give a full review of this machinery here; we refer the reader to \cite[Sec. 5]{Beh07} for an introduction and rigorous definition.

\begin{defin}
Let $E$ be a spectrum for which the $E$-based ASS for $S^0$ converges and let $\alpha \in \pi_t(S^0)$. Then the $k$-th $E$-filtered Mahowald invariant of $\alpha$ is defined in \cite[Def. 5.1]{Beh07} and is denoted by 
$$M^{[k]}(\alpha) \in E_1^{k,*}(S^0),$$
where the right-hand side is the $E_1$-page of the $E$-based ASS for $S^0$. We will suppress $E$ from the notation since we will always take $E = H\f_2$ in this paper. 
\end{defin}

\begin{exm}
When $k=1$ and $E=H\f_p$, this recovers the \emph{algebraic Mahowald invariant} studied by Mahowald and Shick in \cite{MS87} and Shick in \cite{Shi87}. The algebraic Mahowald invariants of many low-dimensional classes were computed by Bruner in \cite{Bru98} and serve as a starting point for many of Behrens' computations.
\end{exm}

\begin{rem2}
There is some indeterminacy in the definition of the filtered Mahowald invariant since one can vary the amount of the Atiyah-Hirzebruch filtration considered at each Adams filtration; the situation is depicted graphically in \cite[Fig. 2]{Beh06}. This indeterminacy will be relevant in some of the calculations below. 
\end{rem2}

The filtered Mahowald invariant was originally defined to compute spherical Mahowald invariants, so we must modify it to compute $\mathit{tmf}$-based Mahowald invariants. 

\begin{defin}
Let $E$ be a spectrum for which the $E$-based ASS for $\mathit{tmf}$ converges and let $\alpha \in \pi_t(bo)$. Then the $k$-th $E$-filtered $\mathit{tmf}$-based Mahowald invariant of $\alpha$ is defined by replacing $\pi_*(-)$ by $\mathit{tmf}_*(-)$ in \cite[Def. 5.1]{Beh07}. We will denote it by 
$$M^{[k]}_{\mathit{tmf}}(\alpha) \in E_1^{k,*}(\mathit{tmf})$$
where the right-hand side is the $E_1$-page of the $E$-based ASS for $\mathit{tmf}$. Again, we will suppress $E$ from the notation since we will always take $E = H\f_2$. 
\end{defin}

In order to compute $M^{[k+1]}_{\mathit{tmf}}(\alpha)$ from $M^{[k]}_{\mathit{tmf}}(\alpha)$, we will need the following specializations of \cite[Thm. 6.4]{Beh07} and \cite[Thm. 6.5]{Beh07}. We refer the reader to \cite[Sec. 4]{Beh06} for the definition of the $K$-Toda bracket $\langle K \rangle(\beta)$ (where $K$ is a finite complex and $\beta \in \pi_*(S^0)$). 

\begin{lem}\label{Lem:Machinery}
\begin{enumerate}
\item There is a (possibly trivial) $H\f_2$-Adams differential (in the ASS for $\mathit{tmf}_*$)
$$d_r(M^{[k_i]}_{\mathit{tmf}}(\alpha)) = \langle P^{-N_{i+1}}_{-N_i} \rangle (M^{[k_{i+1}]}_{\mathit{tmf}}(\alpha)).$$
\item There is an equality of (possibly trivial) elements in $\mathit{tmf}_*$
$$\langle P^{-N_i}_{M} \rangle \overline{(M^{[k_i]}_{\mathit{tmf}}(\alpha))} = \overline{\langle P^{-N_{i+1}}_{M} \rangle (M^{[k_{i+1}]}_{\mathit{tmf}}(\alpha))}.$$
Here, $\overline{\alpha} \in \mathit{tmf}_*$ denotes an element which $\alpha$ detects in the $H\f_2$-based ASS. 
\end{enumerate}
\end{lem}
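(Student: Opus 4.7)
The plan is to follow Behrens' arguments in \cite[\S 6]{Beh07} essentially verbatim, with the key observation being that every step in his proof of Theorems 6.4 and 6.5 is formal and uses only the structure of a module spectrum, not any special property of the sphere. Since $\mathit{tmf}$ is a ring spectrum and Lin's Theorem combined with the Bailey--Ricka equivalence gives the requisite identification of $\mathit{tmf}^{tC_2}$ as $\lim_k (\mathit{tmf} \wedge \Sigma P^\infty_{-k})$, we may replace $S^0$ by $\mathit{tmf}$ throughout the bifiltration construction and all of Behrens' diagrams translate directly.

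First I would set up the bifiltration of $\mathit{tmf}_*(\mathit{tmf}^{tC_2})$ obtained by combining the $H\f_2$-Adams filtration of $\mathit{tmf}$ with the Atiyah-Hirzebruch filtration of $\Sigma P^\infty_{-\infty}$ coming from its cellular filtration. The filtered $\mathit{tmf}$-based Mahowald invariant $M^{[k]}_{\mathit{tmf}}(\alpha)$ is then, by construction, the bidegree $(k, -N_i)$ associated graded piece that first detects the composite $\alpha \colon S^t \to \mathit{tmf}^{tC_2}$. Following \cite[\S 5]{Beh06}, the successive detections $M^{[k_i]}_{\mathit{tmf}}(\alpha)$ for $i = 1, 2, \ldots$ correspond to lifts of $\alpha$ through the tower built from smashing the cellular filtration of $\Sigma P^\infty_{-\infty}$ with the $H\f_2$-Adams resolution of $\mathit{tmf}$.

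Part (1) will then follow by chasing the commutative diagram of exact couples obtained from this bifiltration. The $d_r$-differential on $M^{[k_i]}_{\mathit{tmf}}(\alpha)$ is computed by pushing a representing lift into the next stage of the Adams resolution and observing that its image in the relevant Atiyah-Hirzebruch graded piece factors through the subquotient $P^{-N_{i+1}}_{-N_i}$; the $K$-Toda bracket formalism from \cite[\S 4]{Beh06} encodes precisely this pushforward, which is why it appears on the right-hand side. For part (2), the equality of $K$-Toda brackets comes from comparing the two different orders in which one can evaluate the composite of the lift of $\alpha$ along the attaching data of $P^{-N_i}_M$ versus $P^{-N_{i+1}}_M$; both compute the same element of $\mathit{tmf}_*$ since they arise from the same underlying map in the category of $\mathit{tmf}$-modules.

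The main obstacle, such as it is, is simply verifying that none of Behrens' manipulations require that the underlying spectrum be a sphere or even a connective cover of the sphere. Each step uses only: (i) the fact that we are mapping into a module spectrum over a ring spectrum whose $H\f_2$-ASS converges, (ii) functoriality of the cellular filtration of $\Sigma P^\infty_{-\infty}$ under smashing with $\mathit{tmf}$, and (iii) the standard $K$-Toda bracket machinery, which is defined in any triangulated category with enough structure. All three hold in the category of $\mathit{tmf}$-modules, so the proofs of Theorems 6.4 and 6.5 of \cite{Beh07} carry over unchanged after replacing $\pi_*(-)$ with $\mathit{tmf}_*(-)$. I would therefore present the proof as a brief reduction to \cite[Thms.\ 6.4--6.5]{Beh07}, noting the straightforward modifications, rather than rewriting the full argument.
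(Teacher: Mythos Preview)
Your proposal is correct and matches the paper's approach exactly: the paper does not give an independent proof but simply presents the lemma as a direct specialization of \cite[Thms.~6.4--6.5]{Beh07}, obtained by replacing $\pi_*(-)$ with $\mathit{tmf}_*(-)$ in Behrens' bifiltration machinery. Your write-up is in fact more detailed than the paper's treatment, which just cites Behrens and moves on.
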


These provide a method for lifting our calculations into higher Adams filtration. To get started, we will be aided by the following $\mathit{tmf}$-based analog of \cite[Theorem 6.6]{Beh07}.

\begin{lem}
The algebraic $\mathit{tmf}$-based Mahowald invariant agrees with the first nontrivial $H\f_2$-filtered $\mathit{tmf}$-based Mahowald invariant.
\end{lem}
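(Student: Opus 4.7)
The plan is to mirror the proof of \cite[Theorem 6.6]{Beh07}, adapted from the sphere spectrum to $\mathit{tmf}$ and from $\Ext_A(\f_2)$ to $\Ext_{A(2)}(\f_2)$. The key observation is that both invariants are controlled by the same $Ext$-level data, namely the minimal Atiyah-Hirzebruch filtration $N>0$ for which $\alpha$ maps nontrivially to $\Ext^{**}_{A(2)}(H^*(\Sigma P^\infty_{-N}))$.

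First, I would unwind the definition of $M^{[k]}_{\mathit{tmf}}(\alpha)$ in the case where $k$ equals the minimal Adams filtration $s$ at which $\alpha$ is detected in $\Ext^{**}_{A(2)}(\f_2)$. The bifiltration of \cite[Sec. 5]{Beh07} combines the $H\f_2$-Adams resolution of $\mathit{tmf}$ with the cellular filtration of $\Sigma P^\infty_{-\infty}$; at the minimal Adams filtration there is no room to lift further along the Adams tower, so the filtered Mahowald invariant reduces to data on the $E_1$-page of the $H\f_2$-ASS for $\mathit{tmf}$. Concretely, $M^{[s]}_{\mathit{tmf}}(\alpha)$ is determined by applying $\Ext^{**}_{A(2)}(H^*(-))$ to the defining diagram for $M_{\mathit{tmf}}(\alpha)$ and reading off the resulting element of $\Ext^{s,*}_{A(2)}(\f_2)$.

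Next, I would verify that this $E_1$-page description matches the definition of $M^{alg}_{\mathit{tmf}}(\alpha)$ recalled in Section \ref{SectionAlg}. Both invariants single out the same minimal $N$, namely the one at which $\alpha$ first fails to vanish in $\Ext$ of $H^*(\Sigma P^\infty_{-N})$, and both then record the coset of lifts along the vertical comparison map $\Ext^{s,*}_{A(2)}(\f_2) \to \Ext^{s,*}_{A(2)}(H^*(\Sigma P^\infty_{-N}))$. Identifying the two diagrams term-by-term then yields the claimed equality of cosets.

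The main obstacle is technical rather than conceptual: one must check that Behrens' bifiltration machinery, originally stated for the sphere in \cite[Sec. 5]{Beh07}, adapts cleanly to the category of $\mathit{tmf}$-modules. The relevant ingredients are convergence of the $H\f_2$-based ASS for $\mathit{tmf}$, which holds because $H^*(\mathit{tmf}) \cong A \modmod A(2)$ is connective and of finite type, and the observation that the diagrams defining $M_{\mathit{tmf}}(\alpha)$ live entirely within $\mathit{tmf}$-modules, so the Adams tower, the cellular filtration of $\Sigma P^\infty_{-\infty} \wedge \mathit{tmf}$, and the comparison maps between them behave formally as in the spherical case treated by Behrens. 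Once these technicalities are verified, the identification $M^{alg}_{\mathit{tmf}}(\alpha) = M^{[s]}_{\mathit{tmf}}(\alpha)$ is formal.
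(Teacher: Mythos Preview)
Your proposal is correct and matches the paper's approach: the paper does not give a proof of this lemma but simply states it as the $\mathit{tmf}$-based analog of \cite[Theorem 6.6]{Beh07}, implicitly relying on the same formal argument you outline. Your sketch of how Behrens' bifiltration machinery transports to $\mathit{tmf}$-modules is exactly the verification the paper leaves to the reader.
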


\subsubsection{A simplification using the ASS for $\mathit{tmf}$}
In \cite{Beh07}, Behrens uses Bruner's computations \cite{Bru98} of the algebraic Mahowald invariant of various classes in $Ext^{**}_A(\f_2)$ as a starting point for computing the $H$-filtered (spherical) Mahowald invariant. The algebraic $\mathit{tmf}$-based Mahowald invariants we will need in order to start computing the $\mathit{tmf}$-based Mahowald invariants of the $bo$-based Mahowald invariants of $2^i$ were given in Theorem \ref{atmf1}. To lift these into higher Adams filtration, we need the following facts about the differentials in the ASS for $\mathit{tmf}$. 

\begin{lem}
The differentials in the ASS converging to $\mathit{tmf}_*$ are $v^{32}_2$- and $v^4_1$-linear. 
\end{lem}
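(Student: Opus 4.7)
The natural approach is to reduce $v_1^4$- and $v_2^{32}$-linearity of Adams differentials to the claim that $v_1^4$ and $v_2^{32}$ are permanent cycles in the $E_2$-page $\Ext^{**}_{A(2)}(\f_2,\f_2)$ converging to $\pi_*(\mathit{tmf})$. Once this is established, the Leibniz rule gives, for any class $x$ on the $E_r$-page,
\[
d_r(v_1^4 \cdot x) = d_r(v_1^4) \cdot x + v_1^4 \cdot d_r(x) = v_1^4 \cdot d_r(x),
\]
and analogously for $v_2^{32}$, which is precisely the asserted linearity. So the whole argument reduces to two permanent cycle claims.

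To see that $v_1^4$ is a permanent cycle, I would appeal to Bauer's computation of $\pi_*(\mathit{tmf})$ at the prime two \cite{Bau08}: the class $v_1^4 \in \Ext^{4,12}_{A(2)}$ sits at the bottom of an $h_0$-tower that detects a nonzero multiple of $c_4 \in \pi_8(\mathit{tmf})$, and any class detecting a nonzero homotopy element must be a permanent cycle. For $v_2^{32}$, one identifies the class in the appropriate bidegree $\Ext^{32,224}_{A(2)}$ with the permanent cycle detecting $\Delta^8 \in \pi_{192}(\mathit{tmf})$; this is precisely the class that generates the $192$-fold periodicity of $\pi_*(\mathit{tmf})$ recorded by Bauer (and redrawn in \cite{DFHH14}), so it too is a permanent cycle.

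The only nontrivial step is confirming that the algebraically-defined classes $v_1^4$ and $v_2^{32}$ in $\Ext_{A(2)}$ coincide with the specific permanent cycles detecting $c_4$ and $\Delta^8$ respectively. This can be verified either by comparing with the Adams--Novikov spectral sequence via the unit map $S^0 \to \mathit{tmf}$, where the $v_n$ have canonical meaning and the relevant $v_n^{2^k}$ are known to lift $c_4$ and $\Delta^8$, or by a direct inspection of the multiplicative structure in the $E_\infty$-page of the ASS for $\mathit{tmf}$. I expect this identification to be the only place where genuine care is required: everything else is an application of Leibniz once the permanent cycle input from \cite{Bau08} has been invoked.
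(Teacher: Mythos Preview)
Your proposal is correct and follows essentially the same approach as the paper: the paper's proof simply observes that the permanent cycles $\Delta^8$ and $w_1$ in the ASS detect $v_2^{32}$ and $v_1^4$ respectively, and concludes linearity from this (the Leibniz step you spell out is left implicit). Your identification of these classes with the elements detecting $\Delta^8 \in \pi_{192}(\mathit{tmf})$ and $c_4 \in \pi_8(\mathit{tmf})$ matches the paper's, though the paper uses the name $w_1$ rather than $v_1^4$ for the filtration-$4$ class in stem $8$.
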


\begin{proof}
Multiplication by the permanent cycle $\Delta^8$ detects $v_2^{32}$ in the ASS, so the differentials are $v_2^{32}$-linear. Similarly, multiplication by the permanent cycle $w_1$ detects $v^4_1$ in the ASS, so the differentials are $v_1^{4}$-linear. 
\end{proof}

In view of the $v^8_2$-periodicity of the algebraic $\mathit{tmf}$-based Mahowald invariants and the $v^{32}_2$- and $v^4_1$-periodicity of the differentials in the ASS, it suffices to compute the $\mathit{tmf}$-based Mahowald invariants $M_{\mathit{tmf}}(M_{bo}(2^i))$, $0 \leq i \leq 31$, in order to determine $M_{\mathit{tmf}}(M_{bo}(2^i))$ for all $i \geq 1$. 

\subsection{Psuedo-algorithm for computation}\label{Sec:Pseudoalgorithm}

Before we begin computing, we give a schematic for calculating $M_{\mathit{tmf}}(\alpha)$. Our standard approach is as follows:
\begin{enumerate}
\item Determine the algebraic approximation $M_{\mathit{tmf}}^{alg}(\alpha)$ to the $\mathit{tmf}$-based Mahowald invariant by consulting Theorem \ref{atmf1}. 
\item If this approximation survives in the ASS for $\mathit{tmf}$ \textbf{and} there are no elements of higher Adams filtration which survive in the AHSS for $\mathit{tmf}^{tC_2}$ which can detect $\alpha$, then this is $M_{\mathit{tmf}}(\alpha)$. Otherwise, continue to Step (3). 
\item Check if there is an obvious way of applying Lemma \ref{Lem:Machinery} to lift this approximation into higher Adams filtration.
\begin{enumerate}
\item If yes, lift the element and return to the beginning of Step (2). 
\item If no, and if $\alpha$ does not survive in the ASS for $\mathit{tmf}$, check the AHSS to see which elements lie in higher Adams filtration which can detect $\alpha$. 
\begin{enumerate}
\item If there is only one element, that element is $M_{\mathit{tmf}}(\alpha)$.
\item If there is more than one element, show that all but one of these are $\mathit{tmf}$-based Mahowald invariants of other elements. The remaining element is $M_{\mathit{tmf}}(\alpha)$. 
\end{enumerate}
\item If no, and if $\alpha$ does survive in the $ASS$ for $\mathit{tmf}$, and if there are elements of higher Adams filtration which survive in the AHSS for $\mathit{tmf}^{tC_2}$ which can detect $\alpha$, then try another element. In this case, the elements in higher Adams filtration (or the current approximation) must be ruled out from being $M_{\mathit{tmf}}(\alpha)$ by showing that they are the $\mathit{tmf}$-based Mahowald invariant of different elements $\alpha', \alpha'', \cdots \neq \alpha$. 
\end{enumerate}
\end{enumerate}

\begin{rem2}
This will not always work, and we may employ other \emph{ad hoc} arguments below. For example, we can read several low-dimensional $\mathit{tmf}$-based Mahowald invariants off of the $E_8$-page of the AHSS, so we do not need to employ more elaborate machinery. 
\end{rem2}

\subsection{$v_1$-periodic $\mathit{tmf}$-based Mahowald invariants}\label{Sec:v1perMI}
With the schematic from Section \ref{Sec:Pseudoalgorithm} in mind, our first goal is to show that classes in high Adams filtration are $\mathit{tmf}$-based Mahowald invariants of $v_0$-periodic classes. In particular, we will show that $M_{\mathit{tmf}}(2^{4i+j} \beta^k)$ contains $c_4^i \eta^j \Delta^k$ for $i \geq 1$, $0 \leq j \leq 2$, and $k \geq 0$, and when $j =3$, it contains $2c_6 c_4^{i-1} \Delta^k$. 

\begin{conv}
We will include the cell of $\Sigma P^\infty_{-N}$ where an element in $\pi_*(\mathit{tmf}^{tC_2})$ is detected in square brackets after its $\mathit{tmf}$-based Mahowald invariant. For example, the expression $M_{\mathit{tmf}}(1) = 1[0]$ means that $1 \in \pi_0(\mathit{tmf}^{tC_2})$ is detected by $1 \in \pi_0(\mathit{tmf})$ on the $0$-cell of $\Sigma P^\infty_{-\infty}$. 
\end{conv}

The proof of the following theorem occupies the remainder of this subsection.

\begin{thm}\label{Thm:2beta}
For $i \geq 1$, $j \geq 0$, and $k \in \{0,1\}$, the following $\mathit{tmf}$-based Mahowald invariants hold, possibly up to the addition of elements in higher Adams filtration.
\begin{enumerate}
\item $M_{\mathit{tmf}}(2^{4i}\beta^j\alpha^k) = \Delta^j c^i_4[-16j-8i-4k],$
\item $M_{\mathit{tmf}}(2^{4i+1}\beta^j\alpha^k) = \Delta^j c^i_4 \eta[-1-16j-8i-4k],$
\item $M_{\mathit{tmf}}(2^{4i+2}\beta^j\alpha^k) = \Delta^j c^i_4 \eta^2[-2-16j-8i-4k],$
\item $M_{\mathit{tmf}}(2^{4i+3}\beta^j\alpha^k) = \Delta^j c^{i-1}_4 2c_6[-4-16j-8i-4k].$
\end{enumerate}
\end{thm}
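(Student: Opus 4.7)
The plan is to follow the pseudo-algorithm from Section \ref{Sec:Pseudoalgorithm}, treating the four parts of the theorem uniformly. For each case $\ell \in \{0,1,2,3\}$ in $2^{4i+\ell}\beta^j\alpha^k$, I would first read off the algebraic $\mathit{tmf}$-based Mahowald invariant from Theorem \ref{atmf1}, then lift it to a homotopy class in $\pi_*(\mathit{tmf})$ via the ASS for $\mathit{tmf}$, and finally verify via the AHSS analysis of Section \ref{AHSS} that no competing class in higher Adams filtration changes the answer modulo higher filtration.

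For the first step, the base cases from Theorem \ref{atmf1} together with the periodicities $M^{alg}_{\mathit{tmf}}(h_0^4 x) = v_1^4 M^{alg}_{\mathit{tmf}}(x)$ and $M^{alg}_{\mathit{tmf}}(\beta^2 x) = v_2^8 M^{alg}_{\mathit{tmf}}(x)$ yield algebraic Mahowald invariants which all carry a factor of $v_1^{4i}$. Since $v_1^4$ is detected by $c_4 \in \pi_8(\mathit{tmf})$, the factor $c_4^i$ in the answer arises directly from iterating the $v_1^4$-periodicity; similarly, the factor $\Delta^j$ arises from iterating the $v_2^8$-periodicity, since $v_2^{32}$ is detected by $\Delta^8$.

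For the second step, the $v_1^4$- and $v_2^{32}$-linearity of the ASS differentials for $\mathit{tmf}$ (which hold because $c_4$ and $\Delta^8$ are permanent cycles) ensure that once enough factors of $v_1$ and $v_2$ are present, the algebraic invariants are themselves permanent cycles detecting the claimed homotopy classes. Lemma \ref{Lem:J} then guarantees that classes of the form $c_4^i \eta^\ell \Delta^j$ and $2c_6 c_4^{i-1} \Delta^j$ survive to the $E_\infty$-page of the AHSS for $\mathit{tmf}^{tC_2}$ in the relevant filtrations, and Lemma \ref{Lem:Machinery} lets me read off the Mahowald invariant modulo higher Adams filtration.

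The main obstacle is the $\ell = 3$ case. Here the naive algebraic answer $v_1^{4i} h_1^3$ corresponds to $c_4^i \eta^3$, but this is not the final Mahowald invariant because of a Toda bracket correction: the identity $c_4^{i-1} \cdot 2c_6 \in \langle c_4^i \eta^2, 2, \eta \rangle$ (cited in Section \ref{AHSS} for computing $d_3$-differentials) shifts the answer to $c_4^{i-1} \cdot 2c_6$ in one higher Adams filtration. Verifying this correction requires combining the $d_3$-differential analysis for the AHSS with the corresponding ASS differential analysis for $\mathit{tmf}$, and is where the proof's most delicate bookkeeping resides.
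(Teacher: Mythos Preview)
Your proposal has the right ingredients but misses the paper's main structural device and contains two concrete errors. The paper does \emph{not} treat the four residues $\ell\in\{0,1,2,3\}$ uniformly via the algebraic invariant. Instead it anchors on one residue where the algebraic approximation is already a permanent cycle surviving in the AHSS (e.g.\ $\ell=0$ in Lemma~\ref{Lem:2i} for $j=k=0$, or $\ell=3$ in Lemma~\ref{Lem:2i3beta} for general even $j$), and then determines the remaining three residues by a \emph{squeezing} argument: monotonicity of $|M_{\mathit{tmf}}(2^{4i+\ell}\beta^j\alpha^k)|$ in $\ell$ pins the unknown values between two computed anchors, and an AHSS inspection shows the claimed classes are the only candidates in that range with high enough Adams filtration. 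The same squeezing, now between even $j-1$ and $j+1$, is how the paper handles all odd $j$. Without this device your uniform approach would need far more ASS/AHSS input than you supply.

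The two errors: first, your claim that ``the factor $\Delta^j$ arises from iterating the $v_2^8$-periodicity'' is valid only for even $j$. For odd $j$ the base cases of Theorem~\ref{atmf1} give $M^{alg}_{\mathit{tmf}}(\beta)=g$, $M^{alg}_{\mathit{tmf}}(\beta h_0)=gh_1$, $M^{alg}_{\mathit{tmf}}(\beta h_0^2)=a^2$, etc., so the algebraic invariant of $2^{4i+\ell}\beta^j$ detects something like $c_4^i\Delta^{j-1}\bar\kappa\cdot(-)$ rather than $c_4^i\Delta^j\cdot(-)$; you never close this gap, and the paper closes it only via squeezing. Second, your assertion that ``once enough factors of $v_1$ and $v_2$ are present, the algebraic invariants are themselves permanent cycles'' fails for $k=1$: from Theorem~\ref{atmf1} one has $M^{alg}_{\mathit{tmf}}(\alpha)=a$, and $c_4^i a$ supports an Adams $d_2$ for every $i\ge 1$, so a genuine filtered lift (via the bracket $\langle \Sigma P^{-5-8i}_{-9-8i}\rangle c_4^i = h_2 c_4^i$) is needed to reach $c_4^i[-4-8i]$. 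Finally, your Toda-bracket correction for $\ell=3$ is not the paper's route: in Lemma~\ref{Lem:2i3beta} the $i\ge 1$ case is handled directly (``by the same argument'') as a permanent cycle surviving via Lemma~\ref{Lem:J}, with no separate correction step.
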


We begin with the cases where $j=k=0$.

\begin{lem}\label{Lem:2i}
For $i \geq 1$, we have the following $\mathit{tmf}$-based Mahowald invariants.
\begin{enumerate}
\item $M_{\mathit{tmf}}(2^{4i}) = c^i_4[-8i],$
\item $M_{\mathit{tmf}}(2^{4i+1}) = c^i_4 \eta[-1-8i],$
\item $M_{\mathit{tmf}}(2^{4i+2}) = c^i_4 \eta^2[-2-8i],$
\item $M_{\mathit{tmf}}(2^{4i+3}) = c^{i-1}_4 2c_6[-4-8i].$
\end{enumerate}
\end{lem}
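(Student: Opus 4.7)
The plan is to execute the pseudo-algorithm of Section~\ref{Sec:Pseudoalgorithm}, using the algebraic $\mathit{tmf}$-based Mahowald invariants from Theorem~\ref{atmf1} as the seed. Combining the entries $h_0^j \mapsto h_1^j$ for $j = 0, 1, 2, 3$ with the multiplicative relation $M^{alg}_{\mathit{tmf}}(h_0^4 x) = v_1^4 M^{alg}_{\mathit{tmf}}(x)$ produces first nontrivial filtered approximations detecting $c_4^i \eta^j \in \mathit{tmf}_*$ at Adams filtration $4i+j$, for each $j \in \{0,1,2,3\}$.

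For parts (1)--(3), the candidates $c_4^i$, $c_4^i \eta$, and $c_4^i \eta^2$ are nonzero permanent cycles in the $H\f_2$-based ASS for $\mathit{tmf}$. Lemma~\ref{Lem:J} shows that each of these survives to the $E_\infty$-page of the AHSS on the cells $[-8i]$, $[-1-8i]$, and $[-2-8i]$ respectively. Because the algebraic approximation already sits in Adams filtration $4i+j$ and lands in the correct degree to detect the class $2^{4i+j}$, no lower-filtration candidate is possible. Hence these are the $\mathit{tmf}$-based Mahowald invariants modulo higher Adams filtration, as claimed.

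For part (4), the approximation $c_4^i \eta^3$ vanishes in $\mathit{tmf}_*$ for $i \geq 1$, so the Mahowald invariant must be lifted into strictly higher Adams filtration. The relevant lifting is produced by Lemma~\ref{Lem:Machinery}(2) applied to the Toda bracket $c_4^{i-1}(2c_6) \in \langle c_4^i \eta^2, 2, \eta \rangle$ recorded in Section~\ref{AHSS}: the attaching map from cell $[-4-8i]$ up to cell $[-2-8i]$ encodes precisely this bracket, so the lifted detecting class is $c_4^{i-1}(2c_6)$ on cell $[-4-8i]$. By Lemma~\ref{Lem:J}, this is again a permanent cycle surviving in the AHSS, yielding (4).

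The main difficulty is controlling contributions in yet higher Adams filtration given that our analysis of the AHSS in Section~\ref{AHSS} only reaches the $E_8$-page. I expect to overcome this by exploiting the $v_1^4$- and $v_2^{32}$-linearity of the ASS differentials to reduce to finitely many congruence classes of $i$, by using Lemma~\ref{Lem:J} to pin down the surviving candidates on the relevant cells, and by consulting the explicit $E_8$-page depicted in Appendix~\ref{App:AHSS} for the low values of $i$ where the answer can be read off directly. Any residual correction in higher Adams filtration is absorbed into the ``up to higher Adams filtration'' caveat of the statement.
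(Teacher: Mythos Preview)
Your approach for parts (1)--(3) matches the paper's: seed with the algebraic $\mathit{tmf}$-based Mahowald invariant from Theorem~\ref{atmf1}, observe that $c_4^i\eta^j$ is a permanent cycle, and invoke Lemma~\ref{Lem:J} for AHSS survival. The paper additionally argues directly (by AHSS inspection) that no class in \emph{higher} Adams and Atiyah--Hirzebruch filtration can detect $2^{4i}$, rather than sweeping this into a ``modulo higher filtration'' caveat; this is worth making explicit, since the lemma as stated gives exact answers, not answers modulo indeterminacy.

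For part (4) the two arguments genuinely diverge. You lift the vanishing approximation $c_4^i\eta^3$ using the Toda bracket $c_4^{i-1}(2c_6)\in\langle c_4^i\eta^2,2,\eta\rangle$ and Lemma~\ref{Lem:Machinery}. The paper instead uses a \emph{squeeze argument}: having established (1) for both $i$ and $i+1$, one has
\[
|M_{\mathit{tmf}}(2^{4i})|\leq|M_{\mathit{tmf}}(2^{4i+1})|\leq|M_{\mathit{tmf}}(2^{4i+2})|\leq|M_{\mathit{tmf}}(2^{4i+3})|\leq|M_{\mathit{tmf}}(2^{4(i+1)})|,
\]
together with a lower bound on Adams filtration coming from the factorization $2^{4i+j}=2^j\cdot 2^{4i}$. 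These constraints force $M_{\mathit{tmf}}(2^{4i+j})$ to lie in a narrow window of the AHSS, and inspection shows the claimed elements are the \emph{only} candidates there. This simultaneously handles (2)--(4) and automatically excludes higher-filtration corrections, which your argument treats separately. Your bracket-based lift is more constructive and makes the mechanism for (4) transparent, but the paper's squeeze is shorter and avoids the cell-by-cell bookkeeping (your claim that the relevant attaching map runs from cell $[-4-8i]$ to $[-2-8i]$ needs care, since the bracket $\langle -,2,\eta\rangle$ corresponds to a length-three attaching).
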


\begin{proof}
Observe that $M^{alg}_{\mathit{tmf}}(2^{4i}) = M^{[4i]}_{\mathit{tmf}}(2^{4i}) = c^i_4[-8i]$ for all $i \geq 1$. In the ASS for $\mathit{tmf}$, $c_4^i$ is a permanent cycle. Moreover, examination of the AHSS along with Lemma \ref{Lem:J} shows that $c_4^i[-8i]$ survives to the $E_8$-page. There are no classes in higher Adams filtration and higher Atiyah-Hirzebruch filtration which could detect $c_4^i[-8i]$, so by the $\mathit{tmf}$-analog of \cite[Cor. 6.2]{Beh07}, we see that $c_4^i[-8i]$ is the $\mathit{tmf}$-based Mahowald invariant of $2^{4i}$. 

The remaining cases follow from dimensions considerations and examination of the AHSS. More precisely, we have
$$|M_{\mathit{tmf}}(2^{4i})| \leq |M_{\mathit{tmf}}(2^{4i+1})| \leq |M_{\mathit{tmf}}(2^{4i+2})| \leq |M_{\mathit{tmf}}(2^{4i+3})| \leq |M_{\mathit{tmf}}(2^{4i+4})|$$
for all $i \geq 1$. In the AHSS, this implies that the collection of elements $\{M_{\mathit{tmf}}(2^{4i+j})\}_{j=1}^3$ is situated between $M_{\mathit{tmf}}(2^{4i})$ and $M_{\mathit{tmf}}(2^{4i+4})$ for all $i \geq 1$. Since $2^{4i+j} = 2^j \cdot 2^{4i}$, the Adams filtration of $M_{\mathit{tmf}}(2^{4i+j})$ must be at least the Adams filtration of $M_{tmf}(2^{4i})$. The result then follows by checking that $c^i_4 \eta[-1-8i]$, $c^i_4 \eta^2[-2-8i]$, and $c^{i-1}_42c_6[-4-8i]$ are the only elements which could satisfy these conditions on stem, Adams filtration, and Atiyah-Hirzebruch filtration.
\end{proof}

The cases where $j=0$ and $k=1$ require a slight modification:

\begin{lem}
For $i \geq 1$, we have the following $\mathit{tmf}$-based Mahowald invariants.
\begin{enumerate}
\item $M_{\mathit{tmf}}(2^{4i}\alpha) = c^i_4[-4-8i],$
\item $M_{\mathit{tmf}}(2^{4i+1}\alpha) = c^i_4 \eta[-5-8i],$
\item $M_{\mathit{tmf}}(2^{4i+2}\alpha) = c^i_4 \eta^2[-6-8i],$
\item $M_{\mathit{tmf}}(2^{4i+3}\alpha) = c^{i-1}_42c_6[-8-8i].$
\end{enumerate}
\end{lem}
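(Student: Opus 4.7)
The proof parallels that of Lemma \ref{Lem:2i}, with the added wrinkle that $\alpha \in \pi_4(bo)$ is not in the Hurewicz image of $\mathit{tmf}$ (since $\pi_4(\mathit{tmf}) = 0$), so the algebraic Mahowald invariant must be lifted into higher Adams filtration. The base case $M_{\mathit{tmf}}(\alpha) = (c_4 + \epsilon)[-4]$ from Theorem \ref{tmfbasedMI} already exhibits this: the algebraic approximation $a$ is killed, and the surviving detector lies in strictly higher Adams filtration.

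For part (1), I would begin with the algebraic approximation $M^{alg}_{\mathit{tmf}}(h_0^{4i}\alpha) = v_1^{4i}\,a$ obtained from Theorem \ref{atmf1} together with the $v_1$-periodicity relation $M^{alg}_{\mathit{tmf}}(h_0^4 x) = v_1^4\, M^{alg}_{\mathit{tmf}}(x)$. Because $a$ detects $0 \in \pi_*(\mathit{tmf})$, it is killed by an Adams differential; however, applying Lemma \ref{Lem:Machinery} and tracking the relevant $K$-Toda brackets in the AHSS produced in Section \ref{AHSS} shows that the class $v_1^{4i}\,a$ is detected in higher Adams filtration by a permanent cycle corresponding to $c_4^i$, since multiplication by the permanent cycle $w_1 = v_1^4$ commutes with the Adams differentials. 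Lemma \ref{Lem:J} then guarantees that the listed class is the unique candidate surviving to the $E_\infty$-page of the AHSS in the claimed Atiyah-Hirzebruch filtration.

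For parts (2)--(4), I would invoke the same dimension-counting argument as in Lemma \ref{Lem:2i}. The chain of inequalities
$$|M_{\mathit{tmf}}(2^{4i}\alpha)| \leq |M_{\mathit{tmf}}(2^{4i+1}\alpha)| \leq |M_{\mathit{tmf}}(2^{4i+2}\alpha)| \leq |M_{\mathit{tmf}}(2^{4i+3}\alpha)| \leq |M_{\mathit{tmf}}(2^{4i+4}\alpha)|$$
together with the monotonicity of the Adams filtration under multiplication by $2$ on the source confines each $M_{\mathit{tmf}}(2^{4i+j}\alpha)$ to a narrow region of the AHSS. Inspecting the $E_8$-page then pinpoints the listed elements $c_4^i \eta$, $c_4^i \eta^2$, and $c_4^{i-1} 2c_6$ (with their specified Atiyah-Hirzebruch filtrations) as the only classes simultaneously satisfying the stem, Adams filtration, and Atiyah-Hirzebruch filtration constraints.

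The main obstacle is executing the lift in part (1) carefully: since $a$ is not a permanent cycle, one must trace through Behrens' filtered Mahowald invariant machinery (Lemma \ref{Lem:Machinery}) at low $i$ to identify the higher-filtration class detecting the true Mahowald invariant, and then use the $v_1^4$-linearity of the ASS differentials for $\mathit{tmf}$ (together with the $v_1^4$-periodicity of the algebraic Mahowald invariant) to propagate the result uniformly to all $i \geq 1$. This is directly analogous to the lifts carried out in \cite{Beh06, Beh07} for $v_1$-periodic classes.
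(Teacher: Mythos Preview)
Your approach is essentially the paper's: start from the algebraic approximation $M^{alg}_{\mathit{tmf}}(2^{4i}\alpha) = c_4^i a[-8-8i]$, lift through the Adams differential on $a$ using Lemma~\ref{Lem:Machinery}, land on the permanent cycle $c_4^i[-4-8i]$, and then pin down (2)--(4) by the dimension and filtration squeeze of Lemma~\ref{Lem:2i}. The only substantive thing you leave abstract is the actual content of the lift in (1): the paper names the specific differential $d_2(c_4^i a) = h_2 c_4^i$ and identifies the matching $K$-Toda bracket $\langle \Sigma P^{-5-8i}_{-9-8i} \rangle(c_4^i) = h_2 c_4^i$, which is precisely what Lemma~\ref{Lem:Machinery}(1) requires to conclude $M^{[4i+5]}_{\mathit{tmf}}(2^{4i}\alpha) = c_4^i[-4-8i]$; once you supply that, your outline becomes the paper's proof.
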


\begin{proof}
We begin with $M^{alg}_{\mathit{tmf}}(2^{4i}\alpha) = M^{[4i+3]}_{\mathit{tmf}}(2^{4i}\alpha) = c_4^i a[-8-8i]$ for all $i \geq 1$. There is an Adams differential $d_2(c_4^i a) = c_4 h_2$ for all $i \geq 0$, from which we conclude that $M^{[4i+5]}_{\mathit{tmf}}(2^{4i}\alpha) = c_4^i[-4-8i]$. Indeed, we have
$$\langle \Sigma P^{-5-8i}_{-9-8i} \rangle c_4^i = h_2 c_4^i.$$
The class $c_4^i$ is a permanent cycle in the ASS for $\mathit{tmf}$ and $c_4^i[-4-8i]$ survives to the $E_\infty$-page of the AHSS. It follows that $c_4^i[-4-8i]$ is the $\mathit{tmf}$-based Mahowald invariant of $2^{4i}\alpha$. 

The remaining cases follow from dimension reasons and examination of the AHSS (compare with the proof of Lemma \ref{Lem:2i}). 
\end{proof}

We now move to the general case. We include a proof for $k=0$; the case $k=1$ is identical up to filtration shifts. To start, we calculate $M_{\mathit{tmf}}(2^{4i+3}\beta^j)$ with $j$ even. 

\begin{lem}\label{Lem:2i3beta}
For all $j \geq 0$ even, we have
$$M_{\mathit{tmf}}(2^3\beta^j) = \Delta^j\eta^3[-3-16j]$$
up to the addition of elements in higher Adams filtration. For all $j \geq 0$ even and $i \geq 1$, we have
$$M_{\mathit{tmf}}(2^{4i+3}\beta^j) = \Delta^j c^{i-1}_4 2c_6[-4-16j-8i]$$
up to the addition of elements in higher Adams filtration.
\end{lem}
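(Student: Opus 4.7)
The plan is to derive the algebraic $\mathit{tmf}$-based Mahowald invariant using Theorem~\ref{atmf1} and the $v_1^4$- and $v_2^8$-periodicity relations, and then lift it through Adams filtration via Lemma~\ref{Lem:Machinery} when the algebraic answer detects zero in $\pi_*(\mathit{tmf})$. Writing $j=2m$ and combining $M^{alg}_{\mathit{tmf}}(h_0^4 x)=v_1^4 M^{alg}_{\mathit{tmf}}(x)$ and $M^{alg}_{\mathit{tmf}}(\beta^2 x)=v_2^8 M^{alg}_{\mathit{tmf}}(x)$ with the base case $M^{alg}_{\mathit{tmf}}(h_0^3)=h_1^3$ immediately produces the algebraic approximation $c_4^i\Delta^j h_1^3$ to $M_{\mathit{tmf}}(2^{4i+3}\beta^j)$ on the $E_2$-page of the ASS for $\mathit{tmf}$. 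By the lemma equating algebraic $\mathit{tmf}$-based Mahowald invariants with the first nontrivial $H\f_2$-filtered $\mathit{tmf}$-based Mahowald invariant, this is the starting datum for the pseudo-algorithm of Section~\ref{Sec:Pseudoalgorithm}.

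For $i=0$, I would observe that $\Delta^j h_1^3$ is a permanent cycle in the ASS detecting $\Delta^j\eta^3\ne 0$, and then verify from the $E_8$-page analysis of Section~\ref{AHSS} together with a Lemma~\ref{Lem:J}-style survival argument that $\Delta^j\eta^3[-3-16j]$ survives in the AHSS for $\mathit{tmf}^{tC_2}$. Since no class of strictly higher Adams filtration in the relevant stem can detect $2^3\beta^j$ (the neighboring filtered Mahowald invariants of $2^2\beta^j$ and $2^4\beta^j$, which sandwich ours dimensionally, leave no room), this establishes the first formula up to the permitted higher-Adams-filtration indeterminacy.

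For $i\ge 1$, the algebraic approximation $c_4^i\Delta^j h_1^3$ detects zero in $\pi_*(\mathit{tmf})$: the Toda bracket relation $c_4^{i-1}2c_6\in\langle c_4^i\eta^2,2,\eta\rangle$ recorded earlier in Section~\ref{AHSS} is the avatar of a hidden extension pushing $c_4^i\eta^3$ into Adams filtration at least $4i+4$, detected along the $c_4^{i-1}2c_6$ tower. I would then apply Lemma~\ref{Lem:Machinery}(1), identifying the attaching-map bracket $\langle P^{-4-16j-8i}_{-3-16j-8i}\rangle$ with the Toda bracket $\langle-,2,\eta\rangle$ (after James periodicity the two attaching maps joining the cells at positions $-3-16j-8i$ and $-4-16j-8i$ are multiplication by $2$ and $\eta$). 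The conclusion is that the next nontrivial filtered $\mathit{tmf}$-based Mahowald invariant is $\Delta^j c_4^{i-1}2c_6$ detected in cell $-4-16j-8i$, and survival of this class to $E_\infty$ in the AHSS is precisely the content of Lemma~\ref{Lem:J}.

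The main obstacle is that last identification: matching the attaching bracket supplied by Lemma~\ref{Lem:Machinery} with the algebraic Toda bracket in $\mathit{tmf}_*$, and simultaneously ruling out competitors in still higher Adams filtration. Both points are controlled by $v_1^4$- and $v_2^{32}$-linearity of ASS differentials together with the $\beta^2$-periodicity of the algebraic answer, which cuts the problem down to a finite verification already visible on the $E_8$-page of the AHSS. The essential computational input is the hidden $\eta$-extension in $\mathit{tmf}_*$ encoded by $\langle-,2,\eta\rangle$, which is what produces the shift in Atiyah--Hirzebruch filtration by $-1$ and replaces the $\eta^3$-tower by the $2c_6$-tower.
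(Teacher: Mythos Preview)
Your $i=0$ argument is essentially the paper's: compute the algebraic invariant from Theorem~\ref{atmf1}, observe that $\Delta^j h_1^3$ is a permanent cycle detecting $\Delta^j\eta^3$, check survival in the AHSS, and conclude up to higher Adams filtration.

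For $i\geq 1$ you diverge from the paper. The paper simply says ``by the same argument'': the point is that the genuine algebraic $\mathit{tmf}$-based Mahowald invariant (as actually computed in \cite{DM82}) already lands on the $h_0$-tower at stem $24j+8i+4$ on cell $[-4-16j-8i]$, and that class is a permanent cycle surviving in the AHSS by Lemma~\ref{Lem:J}. No homotopy-level lift is needed; the passage from cell $-3-16j-8i$ to cell $-4-16j-8i$ is an algebraic phenomenon, visible already in $\Ext_{A(2)}(H^*(P^\infty_{-N}))$.

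Your attempt to supply that passage via Lemma~\ref{Lem:Machinery}(1) and a Toda bracket $\langle -,2,\eta\rangle$ has two problems. First, Lemma~\ref{Lem:Machinery}(1) governs the case where the filtered invariant \emph{supports} an Adams differential; here $c_4^i\Delta^j h_1^3$ is instead \emph{hit} by $d_2$ (from $\Delta^j c_4^{i-1}h_0^2 a$), so that part does not apply as written. Second, the bracket $\langle P^{-4-16j-8i}_{-3-16j-8i}\rangle$ involves a two-cell complex with a single attaching map, not the three-cell pattern encoding $\langle -,2,\eta\rangle$; your claim that ``the two attaching maps joining the cells at positions $-3-16j-8i$ and $-4-16j-8i$ are multiplication by $2$ and $\eta$'' cannot be right, since adjacent cells are joined by one attaching map. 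Relatedly, your ``hidden extension'' language is off: there is no hidden $\eta$-extension making $c_4^i\eta^3$ nonzero (indeed $c_4\eta^3 = 4c_4\nu = 0$ in $\pi_{11}(\mathit{tmf})=0$); the Ext class is simply a $d_2$-boundary.

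In short, your instinct that something must be said for $i\geq 1$ is reasonable, but the mechanism you invoke is the wrong one. The cleaner route the paper takes is to recognize that the algebraic Mahowald invariant already sits on the $\Delta^j c_4^{i-1}2c_6$ tower (on the correct cell), and then argue exactly as in the $i=0$ case.
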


\begin{proof}
For all $j$ even, we have $M^{alg}_{\mathit{tmf}}(2^3 \beta^j) = M^{[4j+3]}_{\mathit{tmf}}(2^3 \beta^j) = \Delta^j h_1^3[-3-16j]$. This is a permanent cycle and it survives in the AHSS by Lemma \ref{Lem:J}, so by the $\mathit{tmf}$-based analog of \cite[Thm. 6.1]{Beh07}, we conclude that $M_{\mathit{tmf}}(2^3\beta^j) = \Delta^j \eta^3[-3-16j]$ up to the addition of elements in higher Adams filtration. By the same argument, for all $i \geq 1$ and $j$ even, we have $M_{\mathit{tmf}}(2^{4i+3} \beta^j) = \Delta^j c^{i-1}_4 2c_6[-4-16j-8i]$ up to the addition of elements in higher Adams filtration. 
\end{proof}

Keeping $j$ even but varying the power of $2$, we have the following.

\begin{lem}
Theorem \ref{Thm:2beta} holds for $i \geq 1$ and $j \geq 0$ even.
\end{lem}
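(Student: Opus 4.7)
The strategy is to mirror the proofs of Lemma \ref{Lem:2i}, the unlabeled lemma following it, and Lemma \ref{Lem:2i3beta}, inserting a factor of $\beta^j$ for $j$ even throughout. The multiplicative relation $M^{alg}_{\mathit{tmf}}(\beta^2 x) = v_2^8 M^{alg}_{\mathit{tmf}}(x)$ of Theorem \ref{atmf1}, combined with $M^{alg}_{\mathit{tmf}}(h_0^4 x) = v_1^4 M^{alg}_{\mathit{tmf}}(x)$, provides algebraic $\mathit{tmf}$-based Mahowald invariants for the entire family $2^{4i+r}\beta^j\alpha^k$, and the periodicity classes $v_2^8$, $v_1^4$ detect the permanent cycles $\Delta^2$, $c_4$ respectively.

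First I would treat case (1) with $k=0$ directly. The algebraic Mahowald invariant is $\Delta^j c_4^i[-16j-8i]$ in Adams filtration $4i+4j$; it is a permanent cycle in the ASS for $\mathit{tmf}$, and it survives on the $E_\infty$-page of the AHSS by Lemma \ref{Lem:J}. The $\mathit{tmf}$-analog of \cite[Thm.~6.1]{Beh07} then identifies it as $M_{\mathit{tmf}}(2^{4i}\beta^j)$ up to higher Adams filtration. Case (1) with $k=1$ follows from an additional Adams $d_2$-differential as in the unlabeled lemma after Lemma \ref{Lem:2i}: starting from the algebraic invariant $\Delta^j c_4^i a[-8-8i-16j]$ in filtration $4i+4j+3$, the differential $d_2(\Delta^j c_4^i a) = \Delta^j c_4^{i+1} h_2$, together with the $K$-Toda bracket identity
$$\langle \Sigma P^{-5-8i-16j}_{-9-8i-16j} \rangle (\Delta^j c_4^i) = \Delta^j c_4^{i+1} h_2$$
from Lemma \ref{Lem:Machinery}, lifts the filtered Mahowald invariant to $\Delta^j c_4^i[-4-8i-16j]$ in filtration $4i+4j+5$, where it is again a permanent cycle surviving in the AHSS.

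Cases (2), (3), and case (4) with $k=1$ I would handle by dimensional arguments modeled on the second half of Lemma \ref{Lem:2i}. The chain
$$|M_{\mathit{tmf}}(2^{4i}\beta^j\alpha^k)| \leq |M_{\mathit{tmf}}(2^{4i+1}\beta^j\alpha^k)| \leq \cdots \leq |M_{\mathit{tmf}}(2^{4i+4}\beta^j\alpha^k)|,$$
together with the fact that multiplication by $2$ weakly raises Adams filtration, pins each intermediate invariant to a narrow window of the AHSS in which $\Delta^j c_4^i \eta$, $\Delta^j c_4^i \eta^2$, and $\Delta^j c_4^{i-1} 2c_6$ are the unique candidates up to higher Adams filtration. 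Case (4) with $k=0$ is already Lemma \ref{Lem:2i3beta}, and case (4) with $k=1$ follows by the same argument after a $-4$ shift in cell.

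The main obstacle is verifying uniformity in $j$ of the $d_2$-differential and $K$-Toda bracket used in the $k=1$ lift; this should follow from the $\Delta^8$- and $v_1^4$-linearity of the ASS differentials for $\mathit{tmf}$ recorded in Section \ref{Sec:Prelim}, which reduces the infinite family in $j$ to a finite check across periodicity classes, readable off the AHSS charts in Appendix \ref{App:AHSS}. A secondary issue is ruling out the addition of higher-Adams-filtration survivors of the AHSS; since Lemma \ref{Lem:J} classifies such survivors, this amounts to a finite inspection within each $v_2^8$-periodicity block.
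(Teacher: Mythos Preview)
Your proposal is correct and follows essentially the same dimension--filtration squeeze strategy as the paper, with one organizational difference worth noting. The paper anchors entirely on case~(4): having established $M_{\mathit{tmf}}(2^{4i+3}\beta^j)$ in Lemma~\ref{Lem:2i3beta}, it traps $M_{\mathit{tmf}}(2^{4i+\ell}\beta^j)$ for $\ell=0,1,2$ between consecutive $\ell=3$ values and reads off the unique AHSS candidates. You instead anchor on case~(1) via a direct appeal to the algebraic invariant $\Delta^j c_4^i$, then squeeze cases~(2)--(4) between consecutive case~(1) values. Both orderings work; the paper's version is marginally cleaner because it never needs to argue separately that $\Delta^j c_4^i$ is a permanent cycle in the ASS for $j$ even (though this is true, since the target of $d_2(\Delta^2)$ is $c_4$-torsion). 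Your explicit treatment of $k=1$ via the $d_2(\Delta^j c_4^i a)$ differential is more detailed than the paper, which simply declares $k=1$ ``identical up to filtration shifts.''
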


\begin{proof}
The argument is similar to the proof of Lemma \ref{Lem:2i}. We obtain dimension bounds on $M_{\mathit{tmf}}(2^{4i+\ell}\beta^j)$, $\ell \neq 3$, by comparing to the dimension of $M_{\mathit{tmf}}(2^{4i+3}\beta^j)$ calculated in Lemma \ref{Lem:2i3beta}. These bounds determine analogous bounds on the Atiyah-Hirzebruch filtration of $M_{\mathit{tmf}}(2^{4i+\ell}\beta^j)$. Examination of the AHSS reveals that the only elements with sufficiently high Adams filtration, i.e. the only elements which could detect $M_{\mathit{tmf}}(2^{4i+\ell}\beta^j)$, are in $\{\Delta^j c_4^i \eta^\ell[-\ell-16j-8i]\}_{\ell=0}^2$. Indeed, every other element satisfying these restrictions on Atiyah-Hirzebruch filtration and Adams filtration is already a $\mathit{tmf}$-based Mahowald invariant. The lemma then follows by matching the $\ell$ in $2^{4i+\ell}\beta^j$ to the $\ell$ in $\Delta^j c_4^i \eta^\ell[-\ell-16j-8i]$. 
\end{proof}


\begin{lem}
Theorem \ref{Thm:2beta} holds for $i \geq 1$ and $j\geq0$ (including $j$ odd).
\end{lem}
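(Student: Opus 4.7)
The plan is to mirror the $j$ even proof, replacing the algebraic starting approximation by the appropriate odd-$j$ version from Theorem \ref{atmf1} and then using Behrens' filtered Mahowald invariant machinery (Lemma \ref{Lem:Machinery}) to lift through one or more Adams differentials. Two features distinguish the odd-$j$ case from the even case: for odd $j$ the algebraic $\mathit{tmf}$-based Mahowald invariant is carried by classes of the form $g, gh_1, a^2, ad$ (detecting $\bar\kappa$-type elements in $\mathit{tmf}_*$) rather than by powers of $h_0, h_1$ (detecting $\eta$-type elements), and the asserted answer $\Delta^j c_4^i[\cdots]$ (or $\Delta^j c_4^{i-1} 2c_6[\cdots]$) lies in strictly higher Adams filtration than this starting approximation.

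First, by the periodicity relations $M^{alg}_{\mathit{tmf}}(h_0^4 x) = v_1^4 M^{alg}_{\mathit{tmf}}(x)$ and $M^{alg}_{\mathit{tmf}}(\beta^2 x) = v_2^8 M^{alg}_{\mathit{tmf}}(x)$ of Theorem \ref{atmf1}, together with the $v_1^4$- and $v_2^{32}$-linearity of ASS differentials for $\mathit{tmf}$, I would reduce the verification to a finite check: it suffices to handle odd $j$ in the range $1 \leq j \leq 31$ and finitely many $i$, with the seed case $j=1$ propagating to the other odd values of $j$ in range by multiplication by $v_2^8$.

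Second, I would establish the anchor case $M_{\mathit{tmf}}(2^{4i+3}\beta^j)$ for $j$ odd by iteratively applying Lemma \ref{Lem:Machinery} to the algebraic approximation (for $j=1$, this is $v_1^{4i}\cdot ad$, with Adams filtration and cell identified from Theorem \ref{atmf1} and Bauer \cite{Bau08}). At each stage the next lift is forced by the next Adams differential together with the attaching maps in $\Sigma P^\infty_{-N}$ analyzed in Section \ref{AHSS}. The process terminates at the permanent cycle $\Delta^j c_4^{i-1} 2c_6[-4-16j-8i]$, whose survival in the AHSS is guaranteed by Lemma \ref{Lem:J}. The remaining cases $\ell \in \{0,1,2\}$ then follow by the sandwich argument from the proof of Lemma \ref{Lem:2i}: the internal degree of $M_{\mathit{tmf}}(2^{4i+\ell}\beta^j)$ is pinched between those of $M_{\mathit{tmf}}(2^{4i}\beta^j)$ and $M_{\mathit{tmf}}(2^{4(i+1)}\beta^j)$, divisibility of $2^{4i+\ell}$ by $2^{4i}$ gives a lower bound on Adams filtration, and inspection of the AHSS charts leaves only $\Delta^j c_4^i \eta^\ell[-\ell-16j-8i]$ as a candidate of sufficient Adams filtration. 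The cases $k=1$ follow identically with an additional shift of $-4$ in Atiyah-Hirzebruch filtration coming from multiplication by $\alpha$.

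The principal obstacle is the second step: each application of Lemma \ref{Lem:Machinery} requires identifying the correct Toda bracket $\langle P^{-N'}_{-N}\rangle$ acting on the current filtered Mahowald invariant, and because odd $j$ introduces classes such as $\bar\kappa$ and $2c_6$ which support several nontrivial ASS differentials, the lifting chain may have multiple stages. Ruling out competing lifts in the same internal degree is handled \emph{ad hoc} by direct inspection of the ASS chart for $\mathit{tmf}_*$ together with the AHSS of Section \ref{AHSS}; periodicity guarantees that only finitely many such verifications are needed.
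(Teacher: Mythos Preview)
Your approach differs substantially from the paper's, and in a way that makes your proof considerably harder than necessary.

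The paper does \emph{not} restart the filtered Mahowald invariant machinery for odd $j$. Instead, it sandwiches on the variable $j$: since the even-$j$ cases are already established by the preceding lemma, one observes that $M_{\mathit{tmf}}(2^{4i+m}\beta^{2\ell+1})$ must lie, in both Atiyah--Hirzebruch and Adams filtration, between $M_{\mathit{tmf}}(2^{4i+m}\beta^{2\ell})$ and $M_{\mathit{tmf}}(2^{4i+m}\beta^{2\ell+2})$. Inspection of the AHSS in that range shows that the only surviving classes not already accounted for as $\mathit{tmf}$-based Mahowald invariants of something else are precisely the $\Delta^{2\ell+1} c_4^i \eta^m$ (or $\Delta^{2\ell+1} c_4^{i-1} 2c_6$) asserted in Theorem~\ref{Thm:2beta}. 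That is the entire argument.

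Your proposal instead sandwiches on $i$ within a fixed odd $j$, which forces you to establish the anchor $M_{\mathit{tmf}}(2^{4i+3}\beta^j)$ for odd $j$ from scratch via Lemma~\ref{Lem:Machinery}. This is where your sketch becomes incomplete: for $j$ odd the algebraic approximation (e.g.\ $v_1^{4i}\cdot ad$ for $j=1$) is not a permanent cycle and is quite far, in both Adams filtration and name, from the asserted answer $\Delta^j c_4^{i-1} 2c_6$. You correctly flag this as ``the principal obstacle'' and say the multi-stage lifting would be ``handled \emph{ad hoc},'' but you do not actually carry out any of those lifts or identify the relevant Adams differentials and Toda brackets. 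That is not a minor omission: it is the entire content of the step, and nothing in the paper guarantees that such a lifting chain terminates where you claim without explicit verification.

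The paper's $j$-sandwich avoids all of this. Since both neighboring even-$j$ answers are already known and are permanent cycles surviving in the AHSS by Lemma~\ref{Lem:J}, the odd-$j$ case is pinned down by pure filtration bookkeeping, with no need to touch the algebraic approximation or Lemma~\ref{Lem:Machinery} at all.
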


\begin{proof}
We argue using dimension and filtration restrictions as above. First observe that $M_{\mathit{tmf}}(2^{4i+m}\beta^{2\ell+1})$ must lie between $M_{\mathit{tmf}}(2^{4i+m}\beta^{2\ell})$ and $M_{\mathit{tmf}}(2^{4i+m}\beta^{2\ell+2})$ in the Atiyah-Hirzebruch spectral sequence, and further, its Adams filtration is restricted by the Adams filtration of these elements. Examination of the AHSS reveals that the only elements in these restricted Atiyah-Hirzebruch and Adams filtrations which are not already $\mathit{tmf}$-based Mahowald invariants are the elements claimed in Theorem \ref{Thm:2beta}. The result follows by matching $2^{4i+m}\beta^{2\ell+1}$ to the $(4i+m)$-th element in this list of elements. 
\end{proof}

\subsection{$\mathit{tmf}$-based Mahowald invariants approximating $\beta^h_i$}\label{Sec:v1i}

We now compute the $\mathit{tmf}$-based Mahowald invariants of classes of the form $M_{bo}(2^{i})$, $i \geq 1$. We begin with the classes of the form $\beta^i = M_{bo}(2^{4i})$. 

\begin{prop}
We have the following $\mathit{tmf}$-based Mahowald invariants:
\begin{enumerate}
\item $M_{\mathit{tmf}}(1) = 1[0],$
\item $M_{\mathit{tmf}}(\beta) = \bar{\kappa}[-12],$
\item $M_{\mathit{tmf}}(\beta^2) = \eta \Delta \bar{\kappa}[-29]$
\item $M_{\mathit{tmf}}(\beta^3) = \eta \Delta \bar{\kappa}^2[-41],$
\item $M_{\mathit{tmf}}(\beta^4) = \eta^2 \Delta^2 \bar{\kappa}^2[-58],$
\item $M_{\mathit{tmf}}(\beta^5) = 2\Delta^4 \bar{\kappa}[-76],$ 
\item $M_{\mathit{tmf}}(\beta^6) = \Delta^4 \kappa^3[-90],$
\item $M_{\mathit{tmf}}(\beta^7) = \nu \Delta^6 \kappa[-105].$
\end{enumerate}
\end{prop}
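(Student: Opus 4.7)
The proof proceeds case-by-case following the pseudo-algorithm of Section~\ref{Sec:Pseudoalgorithm}. For each $i$, the starting data is the algebraic $\mathit{tmf}$-based Mahowald invariant: by Theorem~\ref{atmf1} combined with the periodicity relation $M^{alg}_{\mathit{tmf}}(\beta^2 x)=v_2^8 M^{alg}_{\mathit{tmf}}(x)$, one reads off $M^{alg}_{\mathit{tmf}}(\beta^{2k})=v_2^{8k}$ and $M^{alg}_{\mathit{tmf}}(\beta^{2k+1})=v_2^{8k}g$. The cell of $\Sigma P^\infty_{-\infty}$ supporting each algebraic class is determined by internal degree and by the Koszul computations of Davis--Mahowald summarized in Section~\ref{SectionAlg}.

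The cases $i=0,1$ are essentially immediate: the unit map $S^0\to\mathit{tmf}$ gives $M_{\mathit{tmf}}(1)=1[0]$, and for $i=1$ the class $g$ is a permanent cycle in the ASS for $\mathit{tmf}$ detecting $\bar{\kappa}$, so $g[-12]$ converges to $\bar{\kappa}[-12]$ provided no class of strictly higher Adams filtration in the relevant column of the AHSS can detect $\beta$. This last check is handled by invoking Theorem~\ref{Thm:2beta}: every surviving class in the appropriate Atiyah--Hirzebruch filtration with Adams filtration exceeding $4$ is already identified there as $M_{\mathit{tmf}}(2^a\beta^b\alpha^c)$ for some $a,b,c$, and is therefore excluded.

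For $2\le i\le 7$ the algebraic approximation supports an Adams differential in the ASS for $\mathit{tmf}$, and Lemma~\ref{Lem:Machinery} must be iterated to lift it into higher Adams filtration. At each step one computes the attaching-map Toda bracket $\langle P^{-N_{j+1}}_{-N_j}\rangle(M^{[k_{j+1}]}_{\mathit{tmf}}(\beta^i))$ and matches it against a known Adams differential from the chart of $\pi_*(\mathit{tmf})$ in \cite{Bau08,DFHH14}, using the brackets recorded in \cite{Bau08,Isa14} and the $v_2^{32}$- and $v_1^4$-linearity of the differentials. Once a permanent-cycle representative is reached, any remaining higher-filtration candidates in the $E_8$-page of Section~\ref{AHSS} are pruned via Theorem~\ref{Thm:2beta}, leaving a unique answer (for instance $\eta\Delta\bar{\kappa}[-29]$ for $i=2$, $\eta\Delta\bar{\kappa}^2[-41]$ for $i=3$, and so on through $\nu\Delta^6\kappa[-105]$ for $i=7$).

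The main obstacle is the middle range $i\in\{3,4,5,6\}$, where the Adams filtration of the target substantially exceeds that of the starting algebraic approximation, so several successive lifts via Lemma~\ref{Lem:Machinery} are required. Each lift involves identifying precisely which attaching map in $\Sigma P^\infty_{-N}$ realizes the Toda bracket on the current representative, and the correctness of the final class rests on two complementary inputs: the Adams differentials and multiplicative extensions for $\pi_*(\mathit{tmf})$ (from \cite{Bau08,DFHH14}), and the exhaustive list of $v_0$-periodic Mahowald invariants supplied by Theorem~\ref{Thm:2beta}, which discards every spurious candidate in higher Adams filtration. Together these two inputs force each iteration to stabilize on the class asserted in the proposition.
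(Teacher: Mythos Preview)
Your overall strategy matches the paper's, and cases $i\le 4$ and $i=6$ go through essentially as you describe. However, two cases are not handled by the procedure you outline.

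For $i=5$, the standard lift via Lemma~\ref{Lem:Machinery} produces $x_{35}h_{21}^{15}[-70]$ (detecting $\Delta^2\eta^2\bar\kappa^3=2\Delta^4\kappa$), which \emph{is} a permanent cycle in the ASS but does \emph{not} survive in the AHSS in the required filtration. So your stopping criterion ``once a permanent-cycle representative is reached'' fires prematurely and gives the wrong class. The paper's fix is to vary the bifiltration of $\Sigma P^\infty_{-\infty}$ so that Adams filtration $23$ is reached before leaving Atiyah--Hirzebruch filtration $-76$; this exploits the fact that $2\cdot\Delta^4\bar\kappa\neq 0$ in $\mathit{tmf}_*$ and yields $M^{[23]}_{\mathit{tmf}}(\beta^5)=2\Delta^4\bar\kappa[-76]$. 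This bifiltration maneuver is an essential additional idea that your sketch omits.

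For $i=7$, after the algebraic approximation $\Delta^6\bar\kappa[-108]$ dies in the ASS, the AHSS leaves three candidates: $\nu\Delta^6\kappa$, $\nu\Delta^6\kappa\eta$, and $\nu\Delta^6\kappa\nu$. None of these is of the form $\Delta^j c_4^i\eta^\ell$ or $\Delta^j c_4^{i-1}2c_6$, so Theorem~\ref{Thm:2beta} cannot prune any of them. The paper instead eliminates the last two by forward reference to the subsequent propositions, where they are shown to equal $M_{\mathit{tmf}}(\beta^7\eta)$ and $M_{\mathit{tmf}}(\beta^7\eta^2)$ respectively. Your pruning mechanism as stated is therefore insufficient here; you need to invoke those later computations (or compute them simultaneously) rather than Theorem~\ref{Thm:2beta}.
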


\begin{proof}
\begin{enumerate}
\item This is clear from the definition.
\item We begin with $M^{alg}_{\mathit{tmf}}(\beta) = M^{[4]}_{\mathit{tmf}}(\beta) = \bar{\kappa}[-12]$. Recall that $\bar{\kappa}$ is detected by $g$ in the ASS. Inspection of the AHSS shows that there are no classes in higher Adams filtration which can detect $\beta $, so we conclude that $M_{\mathit{tmf}}(\beta) = \bar{\kappa}[-12]$. 
\item We begin with $M^{alg}_{\mathit{tmf}}(\beta^2) = M^{[8]}_{\mathit{tmf}}(\beta^2) = \Delta^2[-32]$. There is an Adams differential $d_2(\Delta^2) = h^2_{21} v_1 x_{35}$ from which we conclude $M^{[10]}_{\mathit{tmf}}(\beta^2) = x_{35} h_{21}^2[-29]$. Indeed, we have
$$\langle \Sigma P^{-30}_{-33} \rangle x_{35} h_{21}^2 = v_1 x_{35} h^2_{21}.$$
The class $x_{35} h^2_{21}$ detects $\eta \Delta \bar{\kappa}$. Inspection of the AHSS shows that there are no classes in higher Adams filtration which can detect $\beta^2$, so we conclude that $M_{\mathit{tmf}}(\beta^2) = \eta \Delta \bar{\kappa}[-29]$. 
\item We begin with $M^{alg}_{\mathit{tmf}}(\beta^3) = M^{[12]}_{\mathit{tmf}}(\beta^3) = \Delta^2 \bar{\kappa}[-44]$. There is an Adams differential $d_2(\Delta^2 \bar{\kappa}) = v_1 x_{35} h_{21}^6$ from which we conclude $M^{[14]}(\beta^3) = x_{35} h_{21}^6[-41]$. Indeed, we have
$$\langle \Sigma P^{-42}_{-45} \rangle x_{35} h_{21}^6 = v_1 x_{35} h_{21}^6.$$
The class $x_{35} h_{21}^6$ detects $\eta \Delta \bar{\kappa}^2$. Inspection of the AHSS shows that there are no classes in higher Adams filtration which can detect $\beta^3$, so we conclude that $M_{\mathit{tmf}}(\beta^3) = \eta \Delta \bar{\kappa}^2[-41]$. 
\item We begin with $M^{alg}_{\mathit{tmf}}(\beta^4) = M^{[16]}_{\mathit{tmf}}(\beta^4) = \Delta^4[-64]$. There is an Adams differential $d_3(\Delta^4) = x_{35} h^{12}_{21}$ from which we conclude $M^{[19]}_{\mathit{tmf}}(\beta^4) = x_{35} h^{11}_{21}[-58]$. Indeed, we have
$$\langle \Sigma P^{-59}_{-65} \rangle x_{35} h_{21}^{11} = x_{35} h_{21}^{12}.$$
The class $x_{35} h_{21}^{11}$ detects $\eta^2 \Delta^2 \bar{\kappa}^2$. Inspection of the AHSS shows that there are no classes in higher Adams filtration which can detect $\beta^4$, so we conclude that $M_{\mathit{tmf}}(\beta^4) = \eta^2 \Delta^2 \bar{\kappa}^2[-58]$. 
\item We begin with $M^{alg}_{\mathit{tmf}}(\beta^5) = M^{[20]}_{\mathit{tmf}}(\beta^5) = \Delta^4\bar{\kappa}[-76]$. There is an Adams differential $d_3(\Delta^4 \bar{\kappa}) = x_{35}h^{16}_{21}$ from which we would like to conclude $M^{[23]}_{\mathit{tmf}}(\beta^5) = x_{35}h^{15}_{21}[-70]$. Indeed, we have
$$\langle \Sigma P^{-71}_{-77} \rangle x_{35} h_{21}^{15} = x_{35} h_{21}^{16}.$$
However, the class $x_{35}h_{21}^{15}$ detects $\Delta^4 2 \kappa = \Delta^2 \eta^2 \bar{\kappa}^3$ which does not survive in the AHSS. 

In order to calculate $M_{\mathit{tmf}}(\beta^5)$, we must vary the bifiltration of $\Sigma P^\infty_{-\infty}$. Note that $2 \cdot \Delta^4 \bar{\kappa}$ is nonzero in $\mathit{tmf}_*$ and that $2 \Delta^4 \bar{\kappa}[-76]$ survives in the AHSS. Varying the bifiltration of $\Sigma P^\infty_{-\infty}$ so that Adams filtration $23$ occurs before Atiyah-Hirzebruch filtration $-76$ gives $M_{\mathit{tmf}}^{[23]}(\beta^5) = 2\Delta^4 \bar{\kappa}[-76]$. There are no classes in higher Adams filtration which can detect $\beta^5$, so we conclude that $M_{\mathit{tmf}}(\beta^5) = 2 \Delta^4 \bar{\kappa}[-76]$. 

\item We begin with $M^{alg}_{\mathit{tmf}}(\beta^6) = M^{[24]}_{\mathit{tmf}}(\beta^6) = \Delta^6[-96]$. There is an Adams differential $d_2(\Delta^6) = \Delta^4 h^2_{21} v_1 x_{35}$ from which we conclude $M^{[26]}_{\mathit{tmf}}(\beta^6) = \Delta^4 h^2_{21} x_{35}[-93]$. Indeed, we have
$$\langle \Sigma P_{-97}^{-94} \rangle \Delta^4 x_{35} h^2_{21} = \Delta^4 h^2_{21} v_1 x_{35}.$$
There is a differential $d_3(\Delta^4 x_{35} h_{21}^2) = x_{35}h_{21}^{21}$ from which we conclude that $M^{[27]}_{\mathit{tmf}}(\beta^6) \neq \Delta^4 h^2_{21}x_{35}[-93]$. Inspection of the AHSS shows that the only class in Adams filtration greater than 26 which can detect $\beta^6$ is $\Delta^4 \kappa^3$, so we must have $M_{\mathit{tmf}}(\beta^6) = \Delta^4 \kappa^3[-90]$. 
\item We begin with $M_{\mathit{tmf}}^{alg}(v_1^{28}) = M^{[28]}_{\mathit{tmf}}(\beta^7) = \Delta^6 \bar{\kappa}[-108]$. This class does not survive in the ASS, and inspection of the AHSS shows that the only classes in higher Adams filtration which can detect $\beta^7$ are $\nu \Delta^6 \kappa$, $\nu \Delta^6 \kappa \eta$, and $\nu \Delta^6 \nu$. We will see below that $M_{\mathit{tmf}}(v_1^{28} \eta) = \nu \Delta^6 \kappa \eta$ and $M_{\mathit{tmf}}(\beta^7 \eta^2)  = \nu \Delta^6 \kappa \nu$, so we conclude that $M_{\mathit{tmf}}(v_1^{28}) = \nu \Delta^6 \kappa[-105]$. 
\end{enumerate}
\end{proof}

We now compute the $\mathit{tmf}$-based Mahowald invariants of classes of the form $\beta^i \eta = M_{bo}(2^{4i+1})$.

\begin{prop}
We have the following $\mathit{tmf}$-based Mahowald invariants:
\begin{enumerate}
\item $M_{\mathit{tmf}}(\eta) = \nu[-2],$
\item $M_{\mathit{tmf}}(\beta \eta) = \eta \Delta[-16],$
\item $M_{\mathit{tmf}}(\beta^2 \eta ) = \eta^2 \Delta^2[-33],$
\item $M_{\mathit{tmf}}(\beta^3 \eta) = \eta^2 \Delta^2 \bar{\kappa}[-45],$
\item $M_{\mathit{tmf}}(\beta^4 \eta) = \nu \Delta^4[-66],$
\item $M_{\mathit{tmf}}(\beta^5 \eta) = \eta^2 \Delta^5[-81],$
\item $M_{\mathit{tmf}}(\beta^6 \eta) = \eta^2 \Delta^5 \bar{\kappa}[-93],$
\item $M_{\mathit{tmf}}(\beta^7 \eta) = \nu \Delta^6 \kappa \eta[-105].$
\end{enumerate}
\end{prop}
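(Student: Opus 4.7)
The plan is to apply the pseudo-algorithm of Section \ref{Sec:Pseudoalgorithm} case by case, in exactly the same spirit as the proof of the preceding proposition. For each $i$, I would first read off $M^{alg}_{\mathit{tmf}}(\beta^i h_1)$ from Theorem \ref{atmf1}, using the multiplicative relations $M^{alg}_{\mathit{tmf}}(\beta^2 x) = v_2^8 M^{alg}_{\mathit{tmf}}(x)$ and the base values $M^{alg}_{\mathit{tmf}}(h_1) = h_2$, $M^{alg}_{\mathit{tmf}}(\beta h_1) = v_2^4 h_1$. By the last lemma of Section \ref{Sec:Prelim}, these coincide with the first nontrivial $H\f_2$-filtered $\mathit{tmf}$-based Mahowald invariants, and so provide the starting inputs for the lifting procedure.

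Next, I would push each starting class into higher Adams filtration via Lemma \ref{Lem:Machinery}: whenever the current representative supports an Adams $d_r$-differential in the ASS for $\mathit{tmf}_*$, part (1) identifies the $d_r$-target with a $\langle P \rangle$-bracket of the next filtered Mahowald invariant, which together with part (2) and the known attaching structure of $\Sigma P^\infty_{-\infty}$ determines the lift. I expect the easy cases (for instance $\beta^2\eta$, $\beta^3\eta$, $\beta^5\eta$, $\beta^6\eta$) to follow this pattern directly from the Adams $d_2$- and $d_3$-differentials already used in the previous proposition, just shifted by multiplication by $h_1$ or $h_2$; the resulting classes $\eta^2\Delta^2$, $\eta^2\Delta^2\bar\kappa$, $\eta^2\Delta^5$ and $\eta^2 \Delta^5 \bar\kappa$ are permanent cycles and should survive in the AHSS by Lemma \ref{Lem:J} or by direct inspection of the $E_8$-page.

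After each lift I would consult the $E_8$-page of the AHSS from Section \ref{AHSS} to decide whether further lifting is needed: the current representative is $M_{\mathit{tmf}}(\beta^i \eta)$ provided no admissible class of strictly higher Adams filtration can also detect $\beta^i\eta$. Competitors will be ruled out in one of two ways: either they are absorbed by Theorem \ref{Thm:2beta} as $\mathit{tmf}$-based Mahowald invariants of $v_1$-periodic classes, or they are eliminated using $\eta$-linearity together with the values $M_{\mathit{tmf}}(\beta^i)$ from the preceding proposition, since the product $\eta \cdot M_{\mathit{tmf}}(\beta^i)$ imposes a strong a priori constraint on $M_{\mathit{tmf}}(\beta^i \eta)$.

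The main obstacles will be the rows $\beta^4\eta$ and $\beta^7\eta$. In those cases the naive algebraic representative is either hit by an Adams differential or fails to survive in the AHSS, and the straightforward application of Lemma \ref{Lem:Machinery} stalls; here I plan to mimic the bifiltration-variation argument used for $\beta^5$ and $\beta^7$ in the preceding proposition, trading Atiyah-Hirzebruch filtration against Adams filtration until a surviving representative (namely $\nu\Delta^4$ and $\nu\Delta^6\kappa\eta$ respectively) is exposed. A consistency check with the already-asserted value $M_{\mathit{tmf}}(\beta^7) = \nu\Delta^6\kappa[-105]$ via $\eta$-multiplicativity will also be needed, and that mutual compatibility is the most delicate bookkeeping step of the proof.
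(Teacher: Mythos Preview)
Your overall plan follows the paper's approach: start from Theorem \ref{atmf1}, lift via Lemma \ref{Lem:Machinery}, and eliminate competitors using the AHSS and Theorem \ref{Thm:2beta}. However, your case-by-case expectations are misaligned with what actually happens, and a few of them would not go through as you describe.

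First, $\beta^4\eta$ is not an obstacle at all. The algebraic starting point $M^{alg}_{\mathit{tmf}}(\beta^4 h_1)=\Delta^4 h_2$ detects $\nu\Delta^4[-66]$, which is already a permanent cycle in the ASS and survives in the AHSS; no lifting or bifiltration argument is needed. Conversely, $\beta^5\eta$ is \emph{not} one of the easy cases: here $\Delta^5\eta[-80]$ supports a $d_3$, and the paper resolves it by the bifiltration-variation trick (together with the dimension bound $|M_{\mathit{tmf}}(\beta^5\eta)|\geq |M_{\mathit{tmf}}(\beta^5)|$), exactly the device you reserved for $\beta^4\eta$ and $\beta^7\eta$.

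Second, your plan for $\beta^2\eta$ and $\beta^6\eta$ via Adams differentials will not work. The algebraic representatives $\Delta^2 h_2$ and $\Delta^6 h_2$ are permanent cycles in the ASS for $\mathit{tmf}$, so Lemma \ref{Lem:Machinery}(1) gives nothing to lift against. The actual obstruction is that $\Delta^2\nu[-34]$ and $\Delta^6\nu[-98]$ fail to survive in the AHSS (in the correct filtration), and the paper finishes these cases by pure elimination: listing the remaining candidates in higher Adams filtration and ruling out all but one using already-computed values ($M_{\mathit{tmf}}(\beta^2)=\eta\Delta\bar\kappa$ eliminates one competitor for $\beta^2\eta$, etc.). Also note Lemma \ref{Lem:J} does not apply to $\eta^2\Delta^2$ and the like, since it requires $c_4^i$ with $i\geq 1$.

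Finally, for $\beta^7\eta$ the paper does not use bifiltration variation; it uses elimination with a forward reference to $M_{\mathit{tmf}}(\beta^7\eta^2)=\nu\Delta^6\kappa\nu$ from the next proposition, paralleling the forward reference already used for $M_{\mathit{tmf}}(\beta^7)$. Your $\eta$-multiplicativity heuristic is in the right spirit but is not what carries the argument.
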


\begin{proof}
\begin{enumerate}
\item This follows from inspection of the AHSS. 
\item We begin with $M^{alg}_{\mathit{tmf}}(\beta  \eta) = M^{[5]}_{\mathit{tmf}}(\beta  \eta) =  \eta \Delta[-16]$. Inspection of the AHSS shows that there are no classes in higher Adams filtration which can detect $\beta  \eta$, so we conclude that $M_{\mathit{tmf}}(\beta  \eta) = \eta \Delta[-16]$. 
\item We begin with $M^{alg}_{\mathit{tmf}}(\beta^2 \eta) = M^{[9]}_{\mathit{tmf}}(\beta^2 \eta) = \Delta^2 \nu[-34]$. This class does not survive in the correct Atiyah-Hirzebruch filtration to detect $\beta^2 \eta$ in the AHSS, so we know that $M_{\mathit{tmf}}(\beta^2 \eta) \neq \Delta^2 \nu[-34]$. The next two classes which could detect $\beta^2 \eta$ in the AHSS are $\eta \Delta \bar{\kappa}$ and $\eta^2 \Delta^2$. We have shown above that $\eta \Delta \bar{\kappa}$ detects $\beta^2$, so it cannot detect $\beta^2 \eta$. Therefore we conclude that $\eta^2 \Delta^2[-33]$ detects $\beta^2 \eta$.
\item We begin with $M^{alg}_{\mathit{tmf}}(\beta^3 \eta) = M^{[13]}_{\mathit{tmf}}(\beta^3 \eta) = \Delta^3 \eta[-48]$. There is an Adams differential $d_2(\Delta^3 \eta) = v_1 x_{35} h^7_{21}$ from which we conclude $M^{[15]}_{\mathit{tmf}}(\beta^3 \eta) = x_{35} h^7_{21}[-45]$. Indeed, we have
$$\langle\Sigma  P^{-46}_{-49} \rangle x_{35} h^7_{21} = v_1 x_{35} h^7_{21}.$$
The class $x_{35} h^7_{21}$ detects $\eta^2 \Delta^2 \bar{\kappa}$. Inspection of the AHSS shows that there are no classes in higher Adams filtration which can detect $\beta^3 \eta$, so we conclude that $M_{\mathit{tmf}}(\beta^3 \eta) = \eta^2 \Delta^2 \bar{\kappa} [-45]$. 
\item We begin with $M^{alg}_{\mathit{tmf}}(\beta^4 \eta) = M_{\mathit{tmf}}^{[17]}(\beta^4\eta) = \Delta^4 \nu[-66]$. This class survives in the AHSS and ASS. Inspection of the AHSS shows that there are no classes in higher Adams filtration which can detect $\beta^4 \eta$, so we conclude that $M_{\mathit{tmf}}(\beta^4\eta) = \nu \Delta^4[-66]$. 
\item We begin with $M^{alg}_{\mathit{tmf}}(\beta^5 \eta) = M_{\mathit{tmf}}^{[21]}(\beta^5 \eta) = \Delta^5 \eta[-80]$. This class supports an Adams differential $d_3(\Delta^5 \eta) = x_{35} h_{21}^{17}$. Since $|M_{\mathit{tmf}}(\beta^5 \eta)| \geq |M_{\mathit{tmf}}(\beta^5)| = |2\Delta^4 \bar{\kappa}| = 116$, we see that the only classes in higher Adams filtration and high enough stem which could detect $\beta^5 \eta$ in the AHSS are $\eta \Delta^4 \bar{\kappa}[-76]$ and $\eta^2 \Delta^5 [-81]$. Since $\eta^2 \Delta^5 = \eta  \Delta^5 \cdot \eta$, we find that varying the bifiltration of $\Sigma P^\infty_{-\infty}$ so that Adams filtration $24$ occurs before Atiyah-Hirzebruch filtration $-81$ gives $M^{[24]}_{\mathit{tmf}}(\beta^5 \eta) = \eta^2 \Delta^5 [-81]$. There are no classes in higher Adams filtration which can detect $\beta^5 \eta$, so we conclude that $M_{\mathit{tmf}}(\beta^5 \eta) = \eta^2 \Delta^5 [-81]$. 

\item We begin with $M^{alg}_{\mathit{tmf}}(\beta^6 \eta) = M_{\mathit{tmf}}^{[25]}(\beta^6 \eta) = \Delta^6 \nu[-98]$. This class does not survive in the AHSS. The only class in higher Adams filtration which can detect $\beta^6 \eta$ is $\eta^2 \Delta^5 \bar{\kappa}$, so we conclude that $M_{\mathit{tmf}}(\beta^6\eta) = \eta^2 \Delta^5 \bar{\kappa}[-93]$. 
\item We begin with $M^{alg}_{\mathit{tmf}}(\beta^7 \eta) = M_{\mathit{tmf}}^{[29]}(\beta^7 \eta) =\Delta^7 \eta[-112]$. This class does not survive in the ASS. The only classes in higher Adams filtration which can detect $\beta^7 \eta$ are $\nu\Delta^6 \kappa \eta$ and $\nu \Delta^6 \kappa \nu$. We will see below that $M_{\mathit{tmf}}(\beta^7 \eta^2) = \nu \Delta^6 \kappa \nu$, so we conclude that $M_{\mathit{tmf}}(\beta^7 \eta) = \nu \Delta^6 \kappa \eta[-105]$. 
\end{enumerate}
\end{proof}

We now compute the $\mathit{tmf}$-based Mahowald invariants of classes of the form $\beta^i \eta^2 = M_{bo}(2^{4i+2})$.

\begin{prop}
We have the following $\mathit{tmf}$-based Mahowald invariants:
\begin{enumerate}
\item $M_{\mathit{tmf}}(\eta^2) = \nu^2[-4],$
\item $M_{\mathit{tmf}}(\beta \eta^2) = \bar{\kappa} \epsilon[-18],$
\item $M_{\mathit{tmf}}(\beta^2 \eta^2 ) = \nu \Delta^2 \eta^2[-35],$
\item $M_{\mathit{tmf}}(\beta^3 \eta^2) = \eta^3 \Delta^3[-49],$
\item $M_{\mathit{tmf}}(\beta^4 \eta^2) = \Delta^4 \nu^2[-68],$
\item $M_{\mathit{tmf}}(\beta^5 \eta^2) =  \eta \Delta \bar{\kappa}^5[-83],$
\item $M_{\mathit{tmf}}(\beta^6 \eta^2) =  \Delta^6 \eta^3[-97],$
\item $M_{\mathit{tmf}}(\beta^7 \eta^2) = \nu \Delta^6 \kappa \nu[-107].$
\end{enumerate}
\end{prop}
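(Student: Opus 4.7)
My plan is to follow the same strategy used in the two preceding propositions for $M_{\mathit{tmf}}(\beta^i)$ and $M_{\mathit{tmf}}(\beta^i \eta)$, treating each of the eight cases in turn using the pseudo-algorithm of Section \ref{Sec:Pseudoalgorithm}. In each case I would start from the algebraic $\mathit{tmf}$-based Mahowald invariant furnished by Theorem \ref{atmf1}, compute $M^{alg}_{\mathit{tmf}}(\beta^i \eta^2)$ as the first nontrivial filtered $\mathit{tmf}$-based Mahowald invariant, and then attempt to lift it to higher Adams filtration via Lemma \ref{Lem:Machinery}. For the case $i=0$, inspection of the low-dimensional portion of the $E_8$-page of the AHSS (as in the $\eta$-case) should suffice to read off $\nu^2[-4]$ directly.

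For the remaining seven cases, the skeleton would be: take the algebraic starting point (which by the pattern of $v_2^8$- and $h_0^4$-periodicity of $M^{alg}_{\mathit{tmf}}$ will involve powers of $\Delta$, $h_{21}$, $x_{35}$, and $v_1$), and hunt for an Adams $d_r$-differential supported by or hitting this representative. When the representative supports a differential $d_r(\alpha) = \beta$, I would use the identity $\langle \Sigma P^{-N'}_{-N}\rangle \alpha' = \beta$ (where $\alpha'$ is a Toda-bracket predecessor of $\alpha$) to push into the next Adams filtration, then translate the resulting $Ext$-class into the corresponding $\mathit{tmf}$-homotopy class via Bauer's chart \cite{Bau08}. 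Finally I would verify via inspection of the AHSS (Appendix \ref{App:AHSS}) that no class of still higher Adams filtration lies in the correct stem and Atiyah-Hirzebruch filtration to detect $\beta^i \eta^2$, or if such classes exist, rule them out by exhibiting them as $\mathit{tmf}$-based Mahowald invariants of other elements already computed in Theorem \ref{Thm:2beta} or in the two preceding propositions.

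The main obstacle, as in the $\beta^5$ and $\beta^6$ calculations, is that a naive algebraic-to-topological lift may fail to survive in the AHSS or ASS. Two specific trouble spots I anticipate: first, for $\beta^5 \eta^2$ the expected pattern suggests a class in the vicinity of $\Delta^5 \nu$ or $\Delta^4 \bar\kappa\eta^2$, neither of which need survive, so I will likely need to vary the bifiltration of $\Sigma P^\infty_{-\infty}$ (swapping the order of Atiyah–Hirzebruch and Adams filtration jumps, as in the proof for $\beta^5$) to land on the surviving class $\eta \Delta \bar\kappa^5[-83]$; second, for $\beta^7 \eta^2$ the algebraic representative $\Delta^7 \nu$ does not survive, and the conclusion $\nu\Delta^6\kappa\nu[-107]$ must be forced by elimination against $M_{\mathit{tmf}}(\beta^7)$ and $M_{\mathit{tmf}}(\beta^7\eta)$, both of which are already pinned down. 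The interim cases $\beta^2 \eta^2, \beta^3 \eta^2, \beta^4 \eta^2, \beta^6 \eta^2$ should follow the cleaner pattern of the corresponding $\beta^i\eta$ arguments: find a $d_2$ or $d_3$ supported on the algebraic class, invoke Lemma \ref{Lem:Machinery}(1), and identify the resulting $Ext$-class with $\nu \Delta^2 \eta^2$, $\eta^3 \Delta^3$, $\Delta^4 \nu^2$, and $\Delta^6 \eta^3$ respectively.

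The hardest step, and where I would spend the most care, is the bookkeeping in the AHSS: because we only know the spectral sequence through the $E_8$-page, I must check in each case that no yet-undetermined $d_{\geq 8}$-differential can alter the permanent-cycle status of the target class, and that no Adams-filtration-higher class in the same stem and appropriate Atiyah-Hirzebruch filtration range is available. The saving grace is that Lemma \ref{Lem:J} together with Theorem \ref{Thm:2beta} already accounts for most of the high-Adams-filtration classes as $v_1$-periodic Mahowald invariants, leaving at most a handful of candidates to inspect in each stem.
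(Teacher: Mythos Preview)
Your plan matches the paper's approach: start from $M^{alg}_{\mathit{tmf}}(\beta^i \eta^2)$, lift via Adams differentials and Lemma \ref{Lem:Machinery} where possible, and otherwise argue by elimination in the AHSS using Theorem \ref{Thm:2beta} and the preceding propositions to rule out competing candidates.

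A few of your anticipated details differ from what actually occurs. By Theorem \ref{atmf1} the algebraic starting point is uniformly $\Delta^i h_2^2$ (detecting $\Delta^i \nu^2$), not a mixture involving $h_{21}$, $x_{35}$, or $v_1$; those classes appear only \emph{after} applying differentials. For $\beta^4 \eta^2$ the algebraic class $\Delta^4 \nu^2[-68]$ already survives and no lift is needed. For $\beta^2 \eta^2$, $\beta^5 \eta^2$, $\beta^6 \eta^2$, and $\beta^7 \eta^2$ the paper does not carry out an explicit differential lift via Lemma \ref{Lem:Machinery}; it simply observes that $\Delta^i \nu^2$ fails to survive (in the AHSS or ASS) and then identifies the unique remaining candidate in higher Adams filtration. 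In particular, no bifiltration variation is required for $\beta^5 \eta^2$, contrary to your expectation. Finally, for $\beta^7 \eta^2$ the algebraic class is $\Delta^7 \nu^2$ (not $\Delta^7 \nu$), and the paper finds $\nu \Delta^6 \kappa \nu$ is the \emph{only} candidate, so no elimination against $M_{\mathit{tmf}}(\beta^7)$ or $M_{\mathit{tmf}}(\beta^7 \eta)$ is needed---in fact the logical dependence runs the other way, as those two cases in the earlier propositions forward-reference this computation.
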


\begin{proof}
\begin{enumerate}
\item This follows from inspection of the AHSS. 
\item We begin with $M^{alg}_{\mathit{tmf}}(\beta  \eta^2) = M^{[6]}_{\mathit{tmf}}(\beta  \eta^2) = \Delta \nu^2[-20]$. There is an Adams differential $d_3(\Delta \nu^2) = (c_4+\epsilon) \bar{\kappa} \eta$ from which we conclude $M^{[8]}(\beta  \eta^2) = (c_4+\epsilon)  \bar{\kappa}[-18]$. Indeed, we have
$$\langle \Sigma P^{-19}_{-21} \rangle (c_4+\epsilon)  \bar{\kappa} = \eta (c_4+\epsilon)  \bar{\kappa}.$$
The class $(c_4+\epsilon)  \bar{\kappa}$ detects $\bar{\kappa} \epsilon.$ Inspection of the AHSS shows that there are no classes in higher Adams filtration which can detect $\beta  \eta^2$, so we conclude $M_{\mathit{tmf}}(\beta  \eta^2) = \bar{\kappa} \epsilon[-18]$. 
\item We begin with $M^{alg}_{\mathit{tmf}}(\beta^2 \eta^2) = M^{[10]}_{\mathit{tmf}}(\beta^2 \eta^2) = \Delta^2 \nu^2[-36]$. This class does not survive in the correct column to detect $\beta^2 \eta^2$ in the AHSS, so $M_{\mathit{tmf}}(\beta^2 \eta^2) \neq \Delta^2 \nu^2$. The only other classes which can detect $\beta^2 \eta^2$ are $\eta^2 \Delta^2$ and $\nu \Delta^2 \eta^2$. The class $\eta^2 \Delta^2$ already detected $\beta^2 \eta$, so we conclude that $\nu \Delta^2 \eta^2[-35]$ detects $\beta^2 \eta^2$. 
\item We begin with $M^{alg}_{\mathit{tmf}}(\beta^3 \eta^2) = M^{[14]}_{\mathit{tmf}}(\beta^3 \eta^2) = \Delta^3\nu^2[-52]$. There is an Adams differential $d_2(\Delta^3 \nu^2) = v_1 x_{35} h^8_{21}$ from which we conclude $M^{[16]}_{\mathit{tmf}}(\beta^3 \eta^2) = x_{35} h^8_{21} + \eta^3\Delta^3$. Indeed, we have 
$$\langle \Sigma P^{-50}_{-53} \rangle (x_{35} h^8_{21} + \eta^3\Delta^3) = v_1 (x_{35} h^8_{21} + \eta^3\Delta^3).$$
The class $x_{35} h^8_{21} + \eta^3\Delta^3$ detects $\eta^3 \Delta^3$. Inspection of the AHSS shows that there are no classes in higher Adams filtration which can detect $\beta^3 \eta^2$, so we conclude that $M_{\mathit{tmf}}(\beta^3 \eta^2) = \eta^3 \Delta^3[-49]$. 
\item We begin with $M_{\mathit{tmf}}^{alg}(\beta^4 \eta^2) = M^{[18]}_{\mathit{tmf}}(\beta^4 \eta^2) = \Delta^4 \nu^2[-68]$. Inspection of the AHSS shows that there are no classes in higher Adams filtration which can detect $\beta^4 \eta^2$, so we conclude that $M_{\mathit{tmf}}(v_1^{16}\eta^2) = \Delta^4 \nu^2[-68]$. 
\item We begin with $M^{alg}_{\mathit{tmf}}(\beta^5 \eta^2) = M^{[22]}_{\mathit{tmf}}(\beta^5 \eta^2) = \Delta^5 \nu^2[-84]$. This class does not survive in the AHSS. The only class in higher Adams filtration which can detect $\beta^5 \eta^2$ is $\eta \Delta \bar{\kappa}^5$, so we conclude that $M_{\mathit{tmf}}(\beta^5 \eta^2) = \eta \Delta \bar{\kappa}^5[-83]$. 
\item We begin with $M^{alg}_{\mathit{tmf}}(\beta^6 \eta^2) = M^{[26]}_{\mathit{tmf}}(\beta^6 \eta^2) = \Delta^6 \nu^2[-100]$. This class does not survive in the AHSS. The only class in higher Adams filtration which can detect $\beta^6 \eta^2$ is $\Delta^6 \eta^3$, so we conclude that $M_{\mathit{tmf}}(\beta^6 \eta^2) = \Delta^6 \eta^3[-97]$. 
\item We begin with $M^{alg}_{\mathit{tmf}}(\beta^7 \eta^2) = M^{[30]}_{\mathit{tmf}}(\beta^7 \eta^2) = \Delta^7 \nu^2[-116]$. This class does not survive in the ASS. The only class in higher Adams filtration which can detect $\beta^7 \eta^2$ is $\nu \Delta^6 \kappa \nu$, so we conclude that $M_{\mathit{tmf}}(\beta^7 \eta^2) = \nu \Delta^6 \kappa \nu[-107]$. 
\end{enumerate}
\end{proof}

We now compute the $\mathit{tmf}$-based Mahowald invariants of classes of the form $\beta^{i-1}\alpha = M_{bo}(2^{4i+3})$. 

\begin{prop}
We have the following $\mathit{tmf}$-based Mahowald invariants:
\begin{enumerate}
\item $M_{\mathit{tmf}}(\alpha) = (c_4 + \epsilon)[-4],$
\item $M_{\mathit{tmf}}(\beta \alpha ) = q[-20],$
\item $M_{\mathit{tmf}}(\beta^2 \alpha) = (c_4 + \epsilon)\Delta^2[-36], $
\item $M_{\mathit{tmf}}(\beta^3 \alpha) = \Delta^3 c_4[-52],$
\item $M_{\mathit{tmf}}(\beta^4 \alpha) = \Delta^4 (c_4 + \epsilon)[-68],$ 
\item $M_{\mathit{tmf}}(\beta^5 \alpha) = \Delta^4 q[-84],$
\item $M_{\mathit{tmf}}(\beta^6 \alpha) =  \Delta^6(c_4 + \epsilon)[-100],$
\item $M_{\mathit{tmf}}(\beta^7 \alpha) = \Delta^7 c_4[-116].$
\end{enumerate}
\end{prop}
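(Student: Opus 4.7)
The plan for this proposition is to apply the pseudo-algorithm from Section~\ref{Sec:Pseudoalgorithm} case by case, using the algebraic $\mathit{tmf}$-based Mahowald invariants of Theorem~\ref{atmf1} as the starting point and then lifting into higher Adams filtration via Lemma~\ref{Lem:Machinery} when necessary. By the $v^8_2$-linearity recorded in Theorem~\ref{atmf1} we have $M^{alg}_{\mathit{tmf}}(\beta^{2k}\alpha) = v^{8k}_2 \cdot a$ and $M^{alg}_{\mathit{tmf}}(\beta^{2k+1}\alpha) = v^{4+8k}_2 \cdot c$, placed on the cell of $\Sigma P^\infty_{-\infty}$ forced by the $\mathit{tmf}$-Adams filtration of the algebraic generator.

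For each $i$, the plan is then to consult the AHSS charts of Section~\ref{AHSS} together with the ASS for $\mathit{tmf}_*$. When the algebraic approximation is a permanent cycle that survives in the correct Atiyah-Hirzebruch column (as in $\beta^3\alpha$ and $\beta^7\alpha$, where the target is a multiple of $\Delta^{2k+1}c_4$), Lemma~\ref{Lem:J} immediately rules out any surviving class of strictly higher Adams filtration and we are finished. Otherwise one locates an Adams differential of the form $d_r(v_2^{8k}\cdot a)$ or $d_r(v_2^{8k+4}\cdot c)$ whose target matches the appropriate $\langle \Sigma P^{-N'}_{-N}\rangle$-bracket, so that Lemma~\ref{Lem:Machinery} promotes the answer to the next claimed representative $(c_4+\epsilon)\Delta^{2k}$ at the cell $[-4-16k]$; one then verifies that this new candidate survives in both spectral sequences and that no class of strictly higher Adams filtration in the AHSS can detect $\beta^i\alpha$.

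The main obstacle is the two cases $\beta\alpha \mapsto q[-20]$ and $\beta^5\alpha \mapsto \Delta^4 q[-84]$, where the target $q$ sits in unusually high Adams filtration and is not forced by a single Adams differential on the algebraic representative $v_2^{4+8k}c$. To pin these down one has to rule out all competing survivors of comparable total degree in the AHSS: some are eliminated because Section~\ref{Sec:v1perMI} has already identified them as $\mathit{tmf}$-based Mahowald invariants of $v_1$-periodic classes of the form $2^{4i+3}\beta^j\alpha^k$, while others are eliminated by the explicit Toda brackets needed to match the preceding filtered invariant, using the $v_2^{32}$- and $v_1^4$-linearity of the Adams differentials to reduce to cases already computed. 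Once these two delicate cases are settled, the remaining $(c_4+\epsilon)\Delta^{2k}$ entries follow along the same pattern as in Lemma~\ref{Lem:2i}, with the shift in Atiyah-Hirzebruch filtration produced by the extra factor of $\alpha$ matching the filtration shift established in the proof that $M_{\mathit{tmf}}(2^{4i}\alpha)=c_4^i[-4-8i]$.
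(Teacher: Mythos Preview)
Your overall scheme (start from Theorem~\ref{atmf1}, lift via Lemma~\ref{Lem:Machinery}, then inspect the AHSS) matches the paper, but you have the difficulty assessment essentially backwards on the odd-power cases, and you miss the genuine subtlety in two of the even-power cases.

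The cases $\beta\alpha$ and $\beta^5\alpha$ are \emph{not} the main obstacle; they are among the easiest. The algebraic representatives $\Delta c$ and $\Delta^5 c$ are permanent cycles in the ASS for $\mathit{tmf}$ and detect $q$ and $\Delta^4 q$ directly. There is no Adams differential on $v_2^{4+8k}c$ to locate, and no elaborate elimination argument is needed: one simply observes that $\Delta c$ detects $q$ and that inspection of the AHSS shows no competing class in higher Adams filtration. So your third paragraph describes work that does not need to be done, and your claim that $q$ ``is not forced by a single Adams differential on the algebraic representative'' misreads the situation.

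The cases that do require extra care are $\beta^2\alpha$ and $\beta^6\alpha$. Here the Adams differential $d_2(\Delta^{2k}a)$ hits $\nu\Delta^{2k}(c_4+\epsilon)$ \emph{plus an extra term} ($x_{59}$ for $k=1$, $x_{165}$ for $k=3$), so the straightforward bracket-matching of Lemma~\ref{Lem:Machinery}(1) does not immediately promote the answer to $(c_4+\epsilon)\Delta^{2k}$ as it does for $k=0,2$. The paper instead uses a counting argument: since $M_{\mathit{tmf}}(2^4\beta^{2k}\alpha)=c_4^2\Delta^{2k}[-44-32k]$ is already known from Theorem~\ref{Thm:2beta}, the four elements $\beta^{2k}\alpha, 2\beta^{2k}\alpha, 4\beta^{2k}\alpha, 8\beta^{2k}\alpha$ must land among the four candidates $\Delta^{2k}(c_4+\epsilon), \Delta^{2k}(c_4+\epsilon)\eta, \Delta^{2k}c_4\eta^2, \Delta^{2k}2c_6$, and dimension comparison fixes the match. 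Your proposal does not flag this step. (Also, Lemma~\ref{Lem:J} as stated requires $j\geq 1$, so it does not literally apply to $\Delta^{2k+1}c_4$; the paper handles those cases by direct AHSS inspection, and for $\beta^7\alpha$ it does need to note that the higher-filtration survivors are already Mahowald invariants of earlier classes.)
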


\begin{proof}
\begin{enumerate}
\item We begin with $M^{alg}_{\mathit{tmf}}(\alpha) = M^{[3]}_{\mathit{tmf}}(\alpha) = a[-8]$. There is a differential $d_2(a) = \nu( c_4+\epsilon)$ from which we conclude $M^{[5]}_{\mathit{tmf}}(\alpha) = (c_4 + \epsilon)[-4]$. Indeed, we have
$$\langle \Sigma P^{-5}_{-9} \rangle (c_4 + \epsilon) = \nu (c_4 + \epsilon).$$
Inspection of the AHSS shows that there are no classes in higher Adams filtration which can detect $\alpha$, so we conclude that $M_{\mathit{tmf}}(\alpha) = (c_4 + \epsilon)[-4]$. 
\item We begin with $M^{alg}_{\mathit{tmf}}(\beta \alpha) = M^{[7]}_{\mathit{tmf}}(\beta \alpha) = \Delta c[-20]$. The class $\Delta c$ detects $q$. Inspection of the AHSS shows that there are no classes in higher Adams filtration which can detect $\beta \alpha$, so we conclude that $M_{\mathit{tmf}}(\beta \alpha) = q[-20]$. 
\item We begin with $M^{alg}_{\mathit{tmf}}(\beta^2 \alpha) = M^{[11]}_{\mathit{tmf}}(\beta^2 \alpha) = \Delta^2 a[-40]$. There is an Adams differential $d_2(\Delta^2 a) = \nu \Delta^2( c_4  + \epsilon)+ x_{59}$  from which we conclude $M_{\mathit{tmf}}(\beta^2 \alpha) \neq \Delta^2 a[-40]$. 
Inspection of the AHSS shows that the only classes in higher Adams filtration which could detect $\beta^2 \alpha$ are $\Delta^2(c_4+\epsilon)$, $\Delta^2(c_4+\epsilon)\eta$, $\Delta^2 c_4 \eta^2$, and $\Delta^2 2c_6$. Note that $M_{\mathit{tmf}}(2^4\beta^2\alpha) = c_4^2 \Delta^2[-44]$, so the elements $\beta^2\alpha$, $2\beta^2\alpha$, $2^2 \beta^2 \alpha$, and $2^3 \beta^2\alpha$ must be detected by the above elements. By comparing dimensions, we conclude that $M_{\mathit{tmf}}(\beta^2 \alpha) = \Delta^2(c_4 + \epsilon)[-36]$. 
\item We begin with $M^{alg}_{\mathit{tmf}}(\beta^3 \alpha) = M^{[15]}_{\mathit{tmf}}(\beta^3 \alpha) = \Delta^3 c[-52]$. Inspection of the AHSS shows that there are no classes in higher Adams filtration which can detect $\beta^3 \alpha$, so we conclude that $M_{\mathit{tmf}}(\beta^3 \alpha) = \Delta^3 c_4[-52]$. 
\item We begin with $M^{alg}_{\mathit{tmf}}(\beta^4 \alpha) = M^{[19]}_{\mathit{tmf}}(\beta^4 \alpha) = \Delta^4 a[-72]$. There is an Adams differential $d_2(\Delta^4 a) = \Delta^4 \nu (c_4 + \epsilon)$ from which we conclude $M^{[21]}_{\mathit{tmf}}(\beta^4 \alpha) = \Delta^4 (c_4 + \epsilon)[-68]$. Indeed, we have
$$\langle \Sigma  P^{-69}_{-73} \rangle \Delta^4 (c_4 + \epsilon) = \nu \Delta^4 (c_4 + \epsilon).$$
Inspection of the AHSS shows that there are no classes in higher Adams filtration which can detect $\beta^4 \alpha$, so we conclude that $M_{\mathit{tmf}}(\beta^4 \alpha) = \Delta^4 (c_4 + \epsilon)[-68]$. 
\item We begin with $M^{alg}_{\mathit{tmf}}(\beta^5 \alpha) = M^{[23]}_{\mathit{tmf}}(\beta^5 \alpha) = \Delta^5 c[-84]$. The class $\Delta^5 c$ detects $\Delta^4 q$. Inspection of the AHSS shows that there are no classes in higher Adams filtration which can detect $\beta^5 \alpha$, so we conclude that $M_{\mathit{tmf}}(\beta^5 \alpha) = \Delta^4 q[-84]$. 
\item We begin with $M^{alg}_{\mathit{tmf}}(\beta^6 \alpha) = M^{[27]}_{\mathit{tmf}}(\beta^6 \alpha) = \Delta^6 a[-104]$. There is an Adams differential $d_2(\Delta^6 a) = \nu \Delta^6 (c_4 +\epsilon) + x_{165}$ from which we conclude that $M_{\mathit{tmf}}^{[29]}(\beta^6 \alpha) \neq  \Delta^6 a[-104]$. The argument for computing $M_{\mathit{tmf}}(\beta^2a)$ carries over by multiplying everything in sight by $\Delta^4$ to show that  $M_{\mathit{tmf}}(\beta^6 \alpha) = \Delta^6(c_4 + \epsilon)[-100]$. 
\item We begin with $M^{alg}_{\mathit{tmf}}(\beta^7 \alpha) = M^{[31]}_{\mathit{tmf}}(\beta^7 \alpha) = \Delta^7 c[-116]$. Inspection of the AHSS shows that there are no classes in higher Adams filtration which can detect $\beta^7 \alpha$, so we conclude that $M_{\mathit{tmf}}(v_1^{30}) = \Delta^7 c_4[-116]$. We note that there are classes in higher Adams filtration, but they are already the $\mathit{tmf}$-based Mahowald invariants of classes above. 
\end{enumerate}
\end{proof}

\bibliographystyle{plain}
\bibliography{master}

\addresseshere

\newpage

\appendix

\section{The $E_8$-page of the AHSS for $\mathit{tmf}^{tC_2}$}\label{App:AHSS}
In this appendix, we present part the $E_8$-page of the AHSS converging to $\pi_*(\mathit{tmf}^{tC_2})$. This $E_8$-page is $(8,8)$-periodic by the periodicity of the differentials in the AHSS. We show the spectral sequence in the range $0 \leq t \leq 7$ and $-142 \leq s \leq 0$. The $s$-range of each piece is listed below it, with the lower $s$-value closer to the bottom of the page. A more complete chart may be viewed at \url{https://e.math.cornell.edu/people/jdq27/tmfAHSSE8.pdf}. 

\begin{figure}
\centering
\includegraphics[scale=0.9]{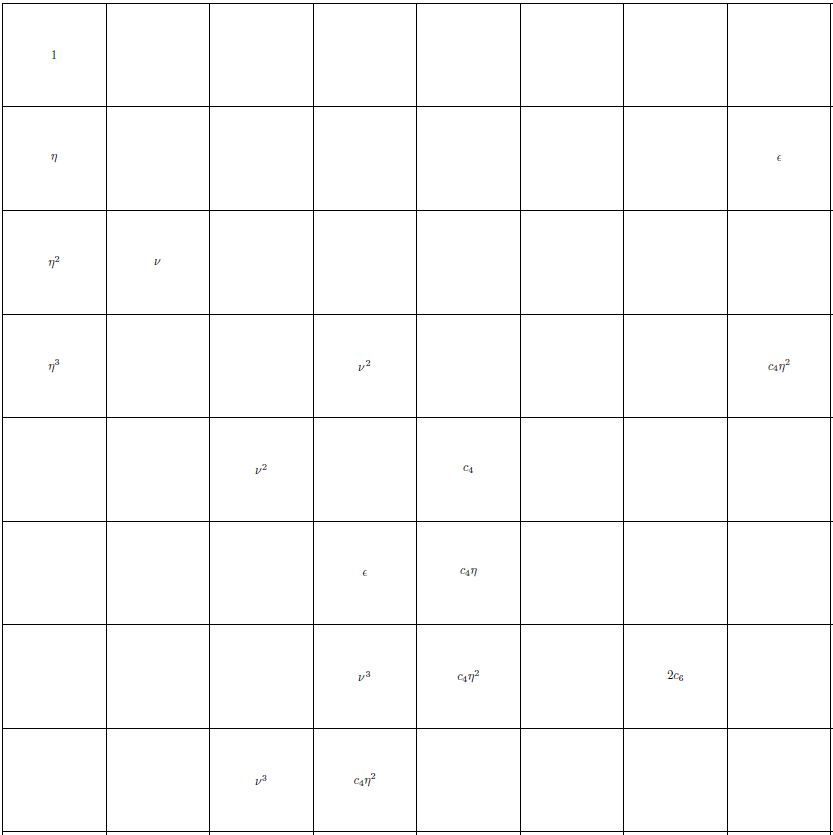} 
\caption{The $E_8$-page of the AHSS for $\mathit{tmf}^{tC_2}$, $0 \leq t \leq 7$, $-7 \leq s \leq 0$. One can read off $M_{\mathit{tmf}}(\alpha)$ for $\alpha \in \{2,2^2,2^3,\eta,\eta^2\}$ from this calculation. Although the $E_8$-page suggests that one could have $M_{\mathit{tmf}}(\eta^3) = \epsilon$, we claim that one can calculate $M_{\mathit{tmf}}(\eta^3) = \nu^3$. Indeed, $dim(M_{\mathit{tmf}}(\eta^3)) \geq dim(M(\eta^3))$ by \cite[Thm. 2.15]{MR93}, and $M(\eta^3) = \nu^3$. }
\end{figure}

\begin{figure}
\centering
\includegraphics[scale=0.9]{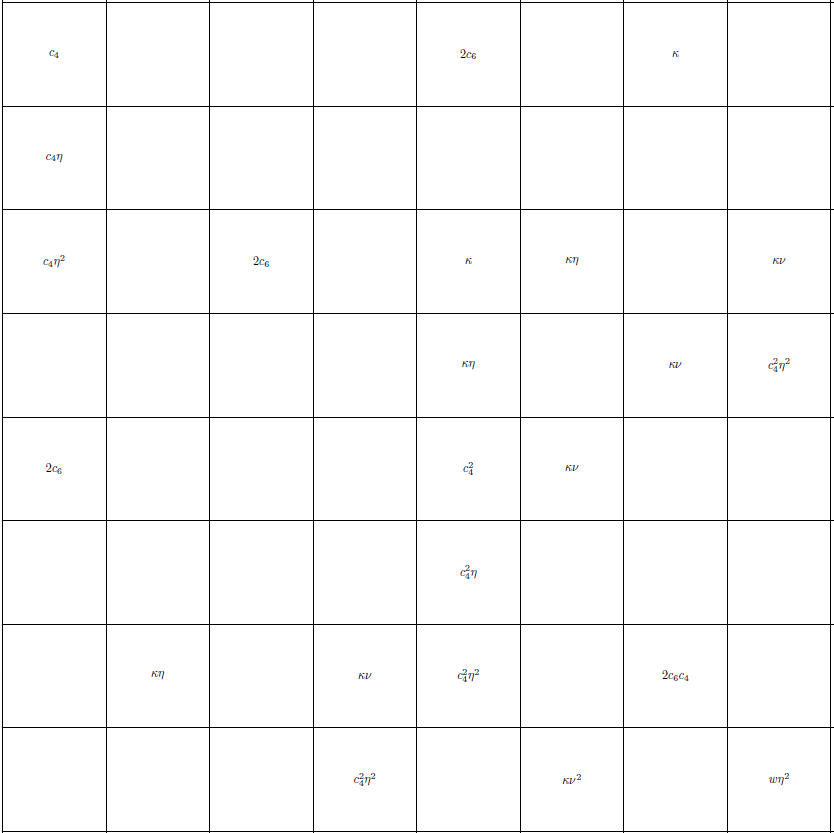} 
\caption{The $E_8$-page of the AHSS for $\mathit{tmf}^{tC_2}$, $0 \leq t \leq 7$, $-15 \leq s \leq -8$. The classes $c_4^i x$ along the column $t=0$ detect $v_1^{4i} \bar{x}$ in $\mathit{tmf}_*$. The class $w \eta^2$ detects $\bar{\kappa} \eta^2$ in $\mathit{tmf}_*$. More generally, the class $w^i x$ detects $\bar{\kappa}^i \bar{x}$ in $\mathit{tmf}_*$. }
\end{figure}

\begin{figure}
\centering
\includegraphics[scale=0.9]{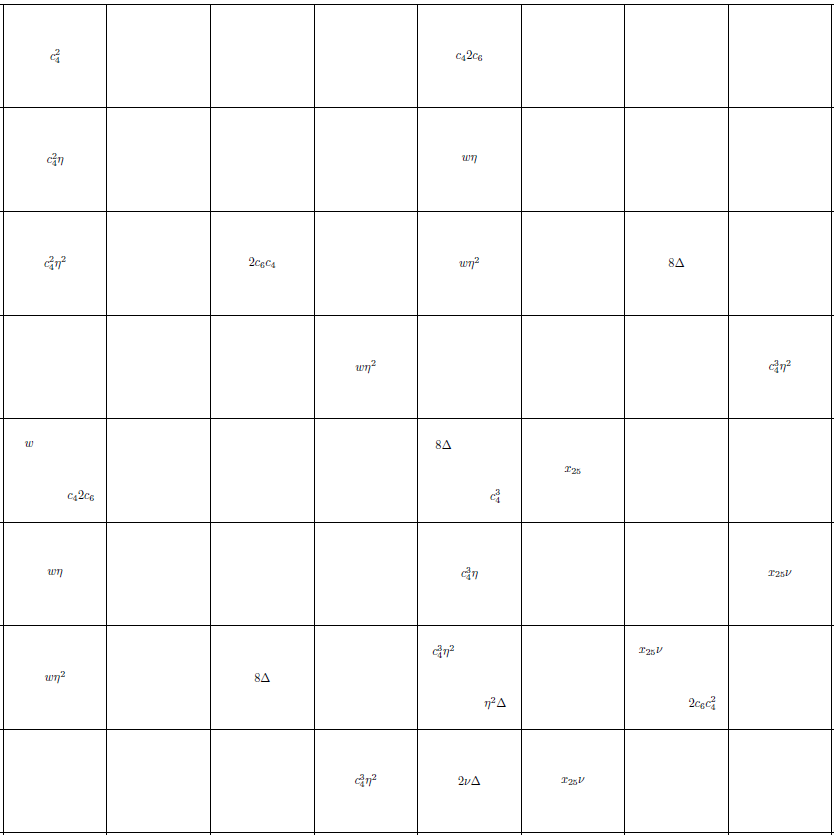} 
\caption{The $E_8$-page of the AHSS for $\mathit{tmf}^{tC_2}$, $0 \leq t \leq 7$, $-23 \leq s \leq -16$. Classes of the form $\Delta^i x$ detect $v_2^{4i} \bar{x}$ in $\mathit{tmf}_*$. The class $x_{25}$ detects $\{ \eta \Delta\}$ in $\mathit{tmf}_{25}$. }
\end{figure}

\begin{figure}
\centering
\includegraphics[scale=0.9]{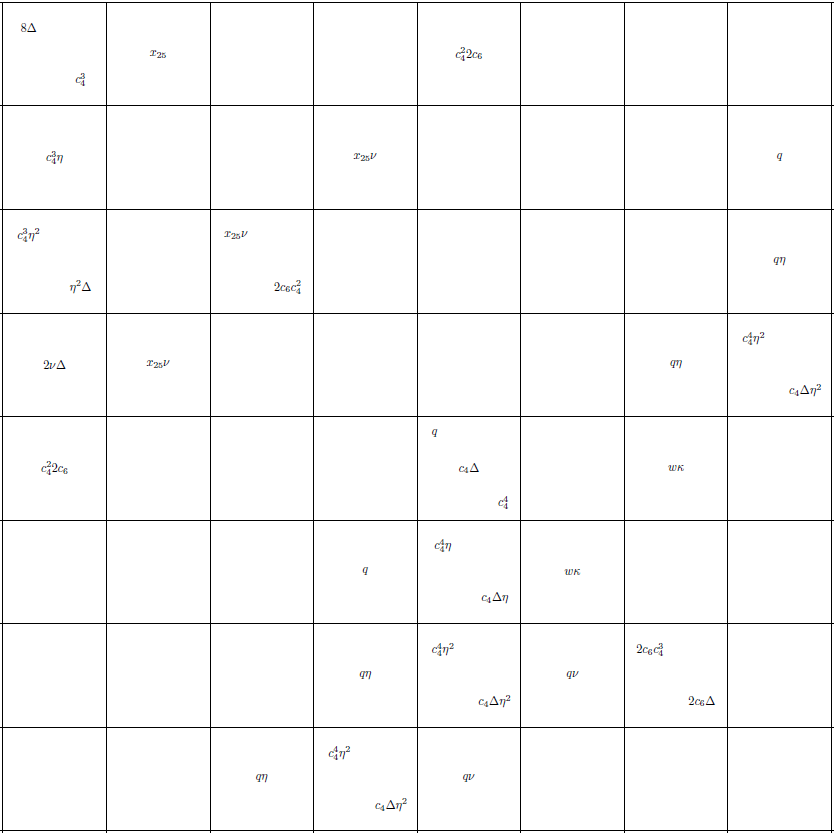} 
\caption{The $E_8$-page of the AHSS for $\mathit{tmf}^{tC_2}$, $0 \leq t \leq 7$, $-31 \leq s \leq -24$.}
\end{figure}

\begin{figure}
\centering
\includegraphics[scale=0.9]{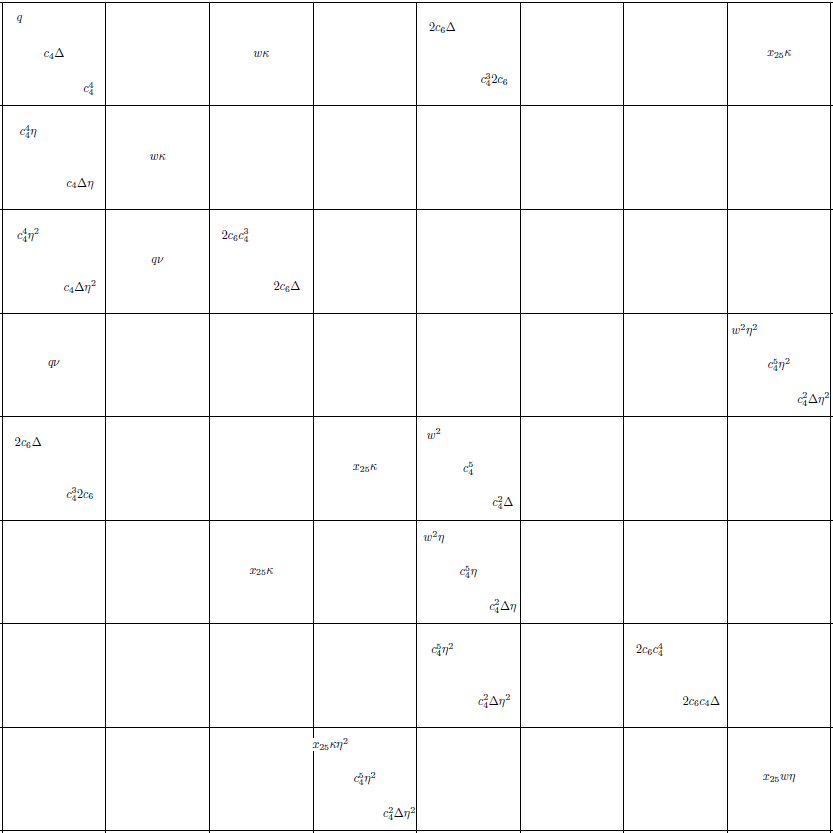} 
\caption{The $E_8$-page of the AHSS for $\mathit{tmf}^{tC_2}$, $0 \leq t \leq 7$, $-39 \leq s \leq -32$.}
\end{figure}

\begin{figure}
\centering
\includegraphics[scale=0.9]{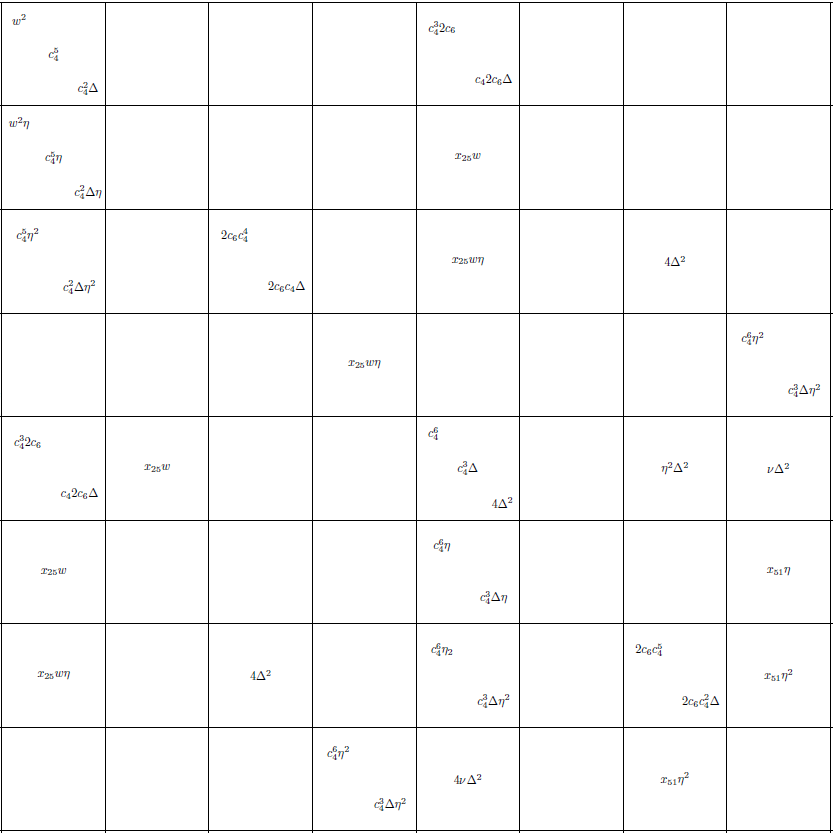} 
\caption{The $E_8$-page of the AHSS for $\mathit{tmf}^{tC_2}$, $0 \leq t \leq 7$, $-47 \leq s \leq -40$. The class $x_{51}$ detects $\{\nu \Delta^2\}$ in $\mathit{tmf}_{51}$. }
\end{figure}

\begin{figure}
\centering
\includegraphics[scale=0.9]{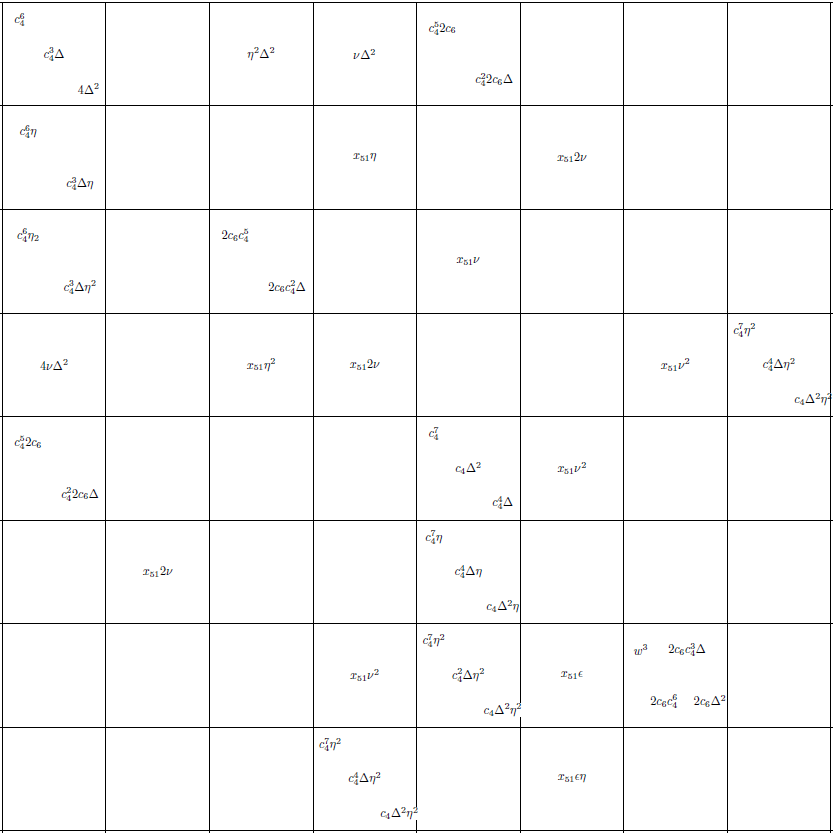} 
\caption{The $E_8$-page of the AHSS for $\mathit{tmf}^{tC_2}$, $0 \leq t \leq 7$, $-55 \leq s \leq -48$.}
\end{figure}

\begin{figure}
\centering
\includegraphics[scale=0.9]{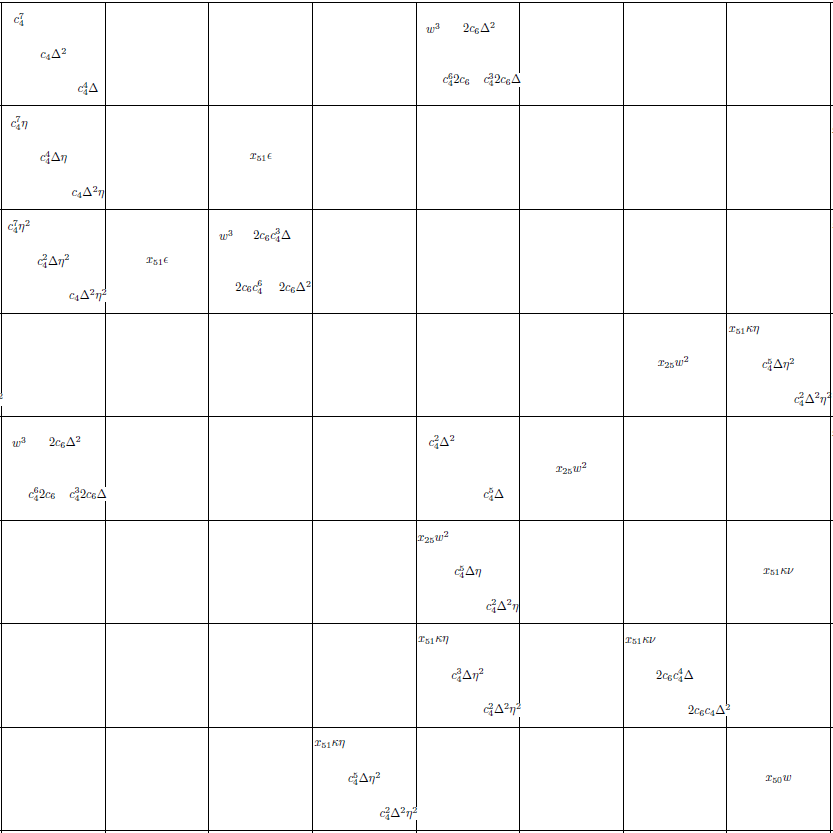} 
\caption{The $E_8$-page of the AHSS for $\mathit{tmf}^{tC_2}$, $0 \leq t \leq 7$, $-61 \leq s \leq -56$.}
\end{figure}

\begin{figure}
\centering
\includegraphics[scale=0.9]{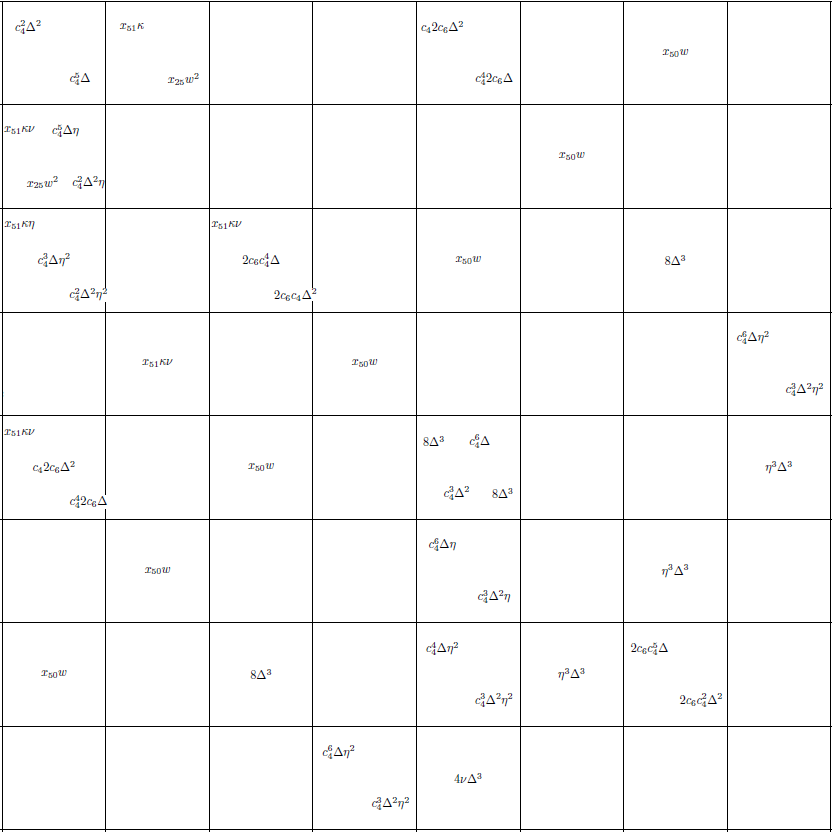} 
\caption{The $E_8$-page of the AHSS for $\mathit{tmf}^{tC_2}$, $0 \leq t \leq 7$, $-69 \leq s \leq -64$. From now on, we suppress terms containing $c_4^i$ for $i \geq 8$ or $c_4^j 2c_6$ for $j \geq 7$ for readability.}
\end{figure}

\begin{figure}
\centering
\includegraphics[scale=0.9]{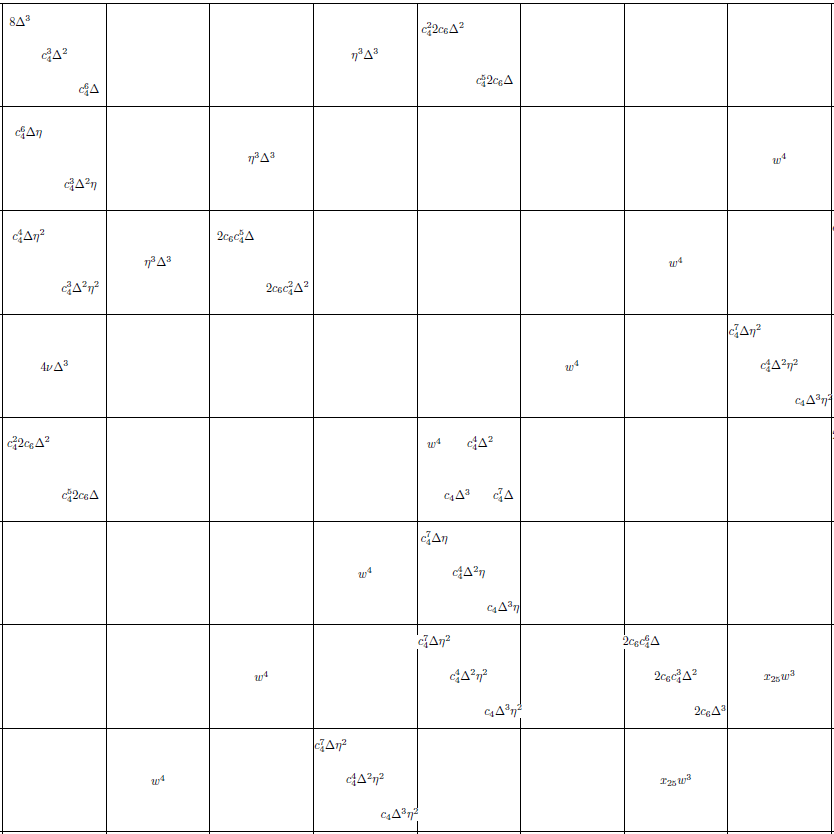} 
\caption{The $E_8$-page of the AHSS for $\mathit{tmf}^{tC_2}$, $0 \leq t \leq 7$, $-79 \leq s \leq -72$.}
\end{figure}

\begin{figure}
\centering
\includegraphics[scale=0.9]{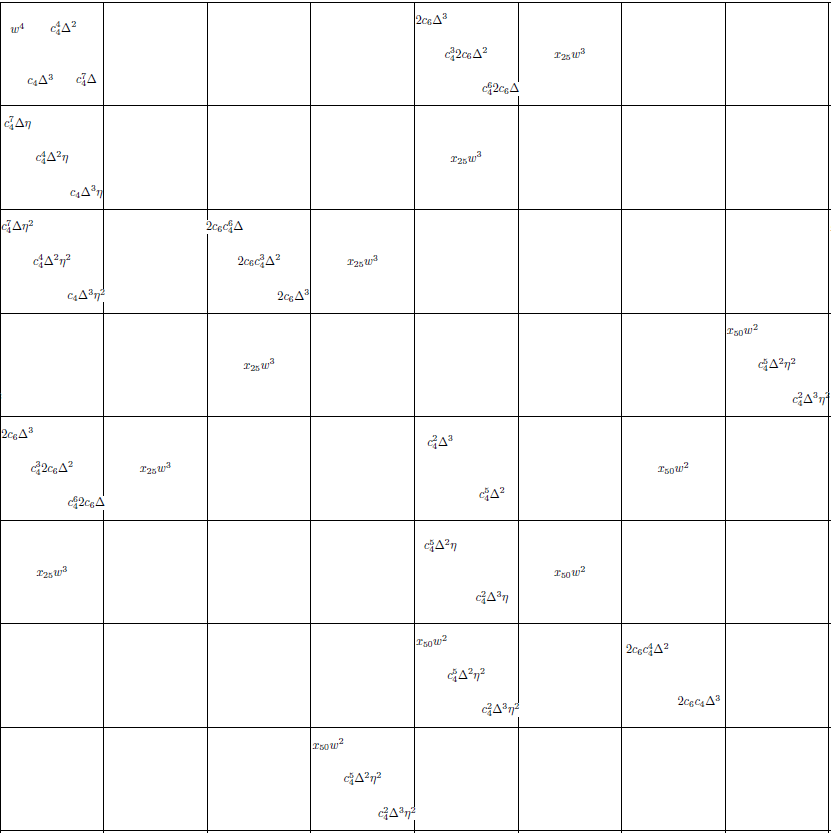} 
\caption{The $E_8$-page of the AHSS for $\mathit{tmf}^{tC_2}$, $0 \leq t \leq 7$, $-87 \leq s \leq -80$.}
\end{figure}

\begin{figure}
\centering
\includegraphics[scale=0.9]{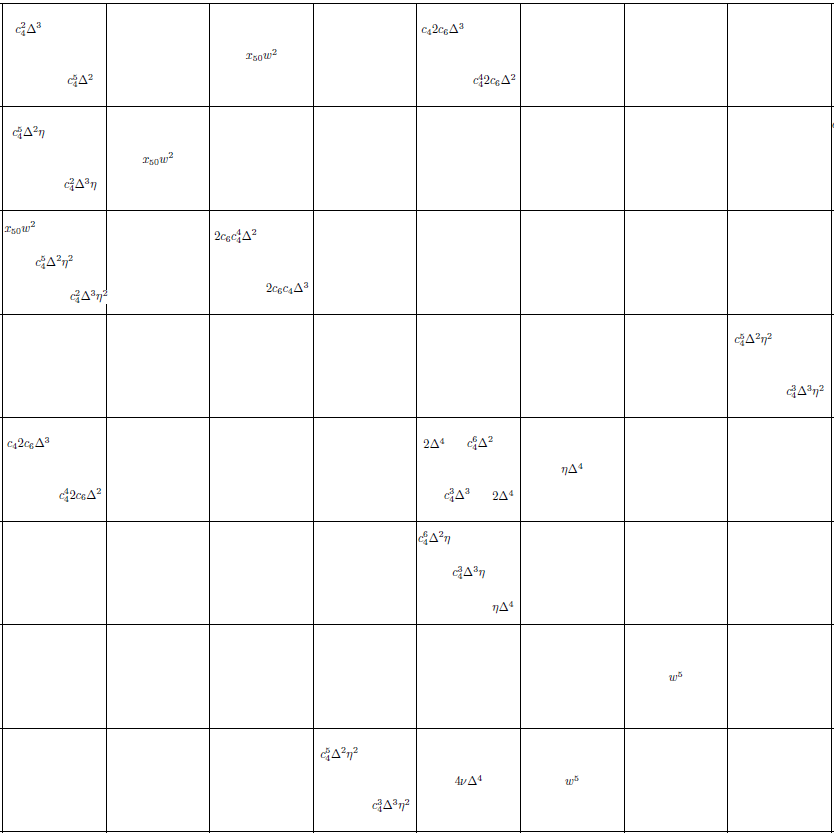} 
\caption{The $E_8$-page of the AHSS for $\mathit{tmf}^{tC_2}$, $0 \leq t \leq 7$, $-95 \leq s \leq -88$.}
\end{figure}

\begin{figure}
\centering
\includegraphics[scale=0.9]{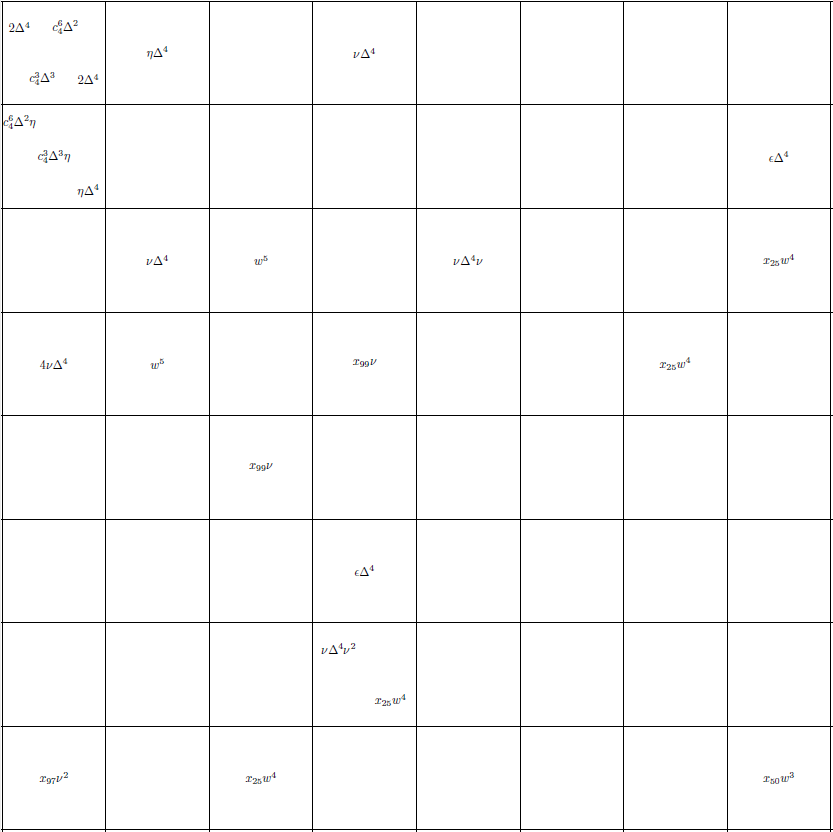} 
\caption{The $E_8$-page of the AHSS for $\mathit{tmf}^{tC_2}$, $0 \leq t \leq 7$, $-103 \leq s \leq -96$. From now on, we suppress most $v_1$-periodic classes. These continue to accumulate as in the previous charts, but we can ignore them in Section \ref{Sec:v1i} in view of Section \ref{Sec:v1perMI}. }
\end{figure}

\begin{figure}
\centering
\includegraphics[scale=0.9]{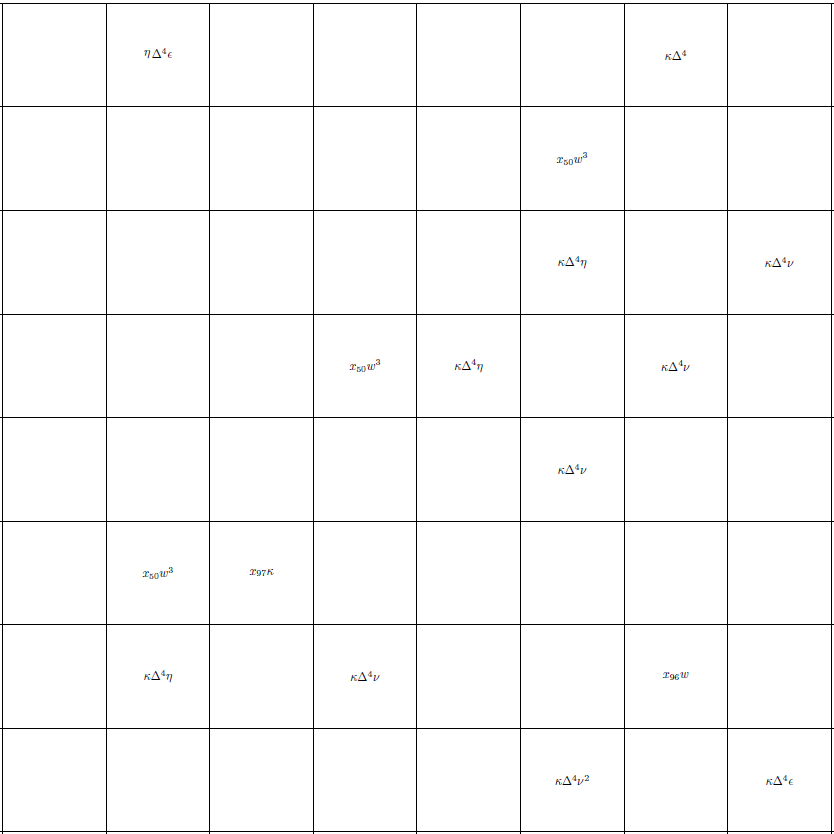} 
\caption{The $E_8$-page of the AHSS for $\mathit{tmf}^{tC_2}$, $0 \leq t \leq 7$, $-111 \leq s \leq -104$.}
\end{figure}

\begin{figure}
\centering
\includegraphics[scale=0.9]{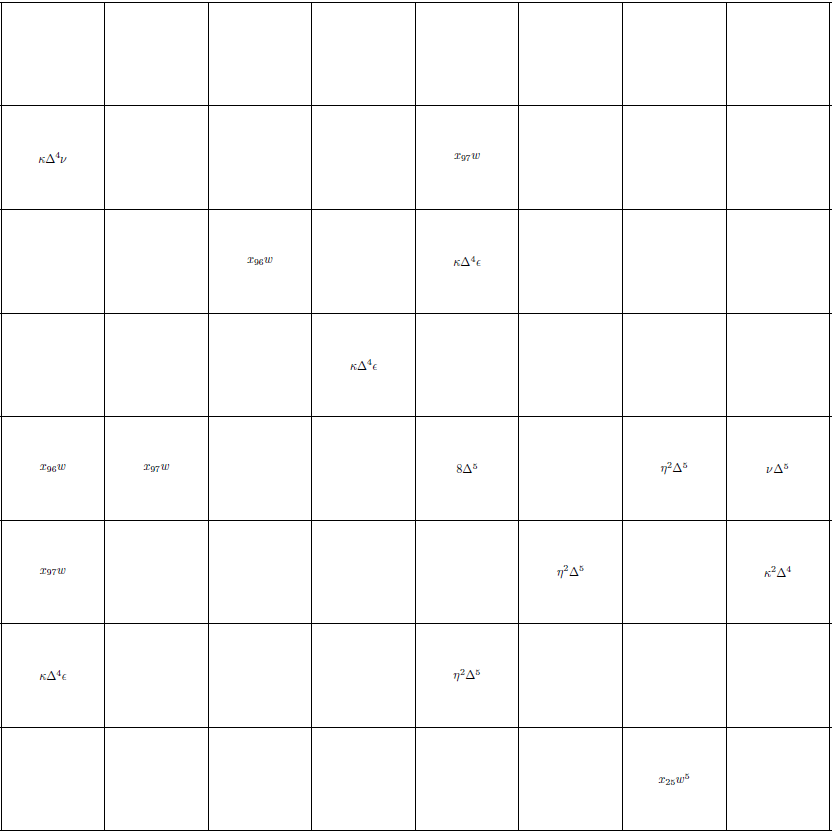} 
\caption{The $E_8$-page of the AHSS for $\mathit{tmf}^{tC_2}$, $0 \leq t \leq 7$, $-119 \leq s \leq -112$.}
\end{figure}

\begin{figure}
\centering
\includegraphics[scale=0.9]{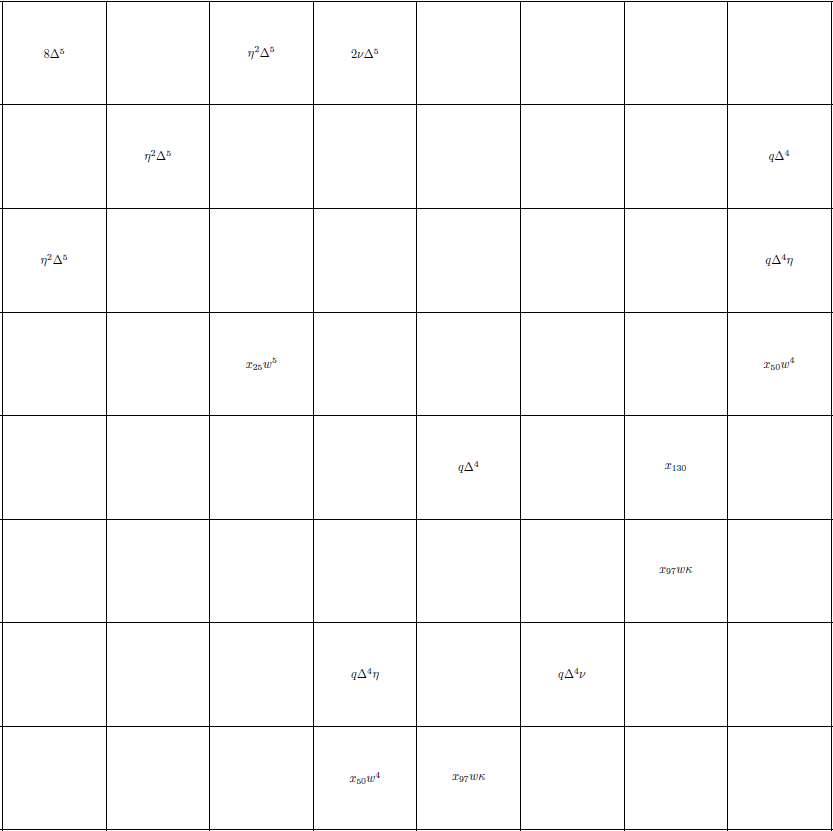} 
\caption{The $E_8$-page of the AHSS for $\mathit{tmf}^{tC_2}$, $0 \leq t \leq 7$, $-127 \leq s \leq -120$.}
\end{figure}

\begin{figure}
\centering
\includegraphics[scale=0.9]{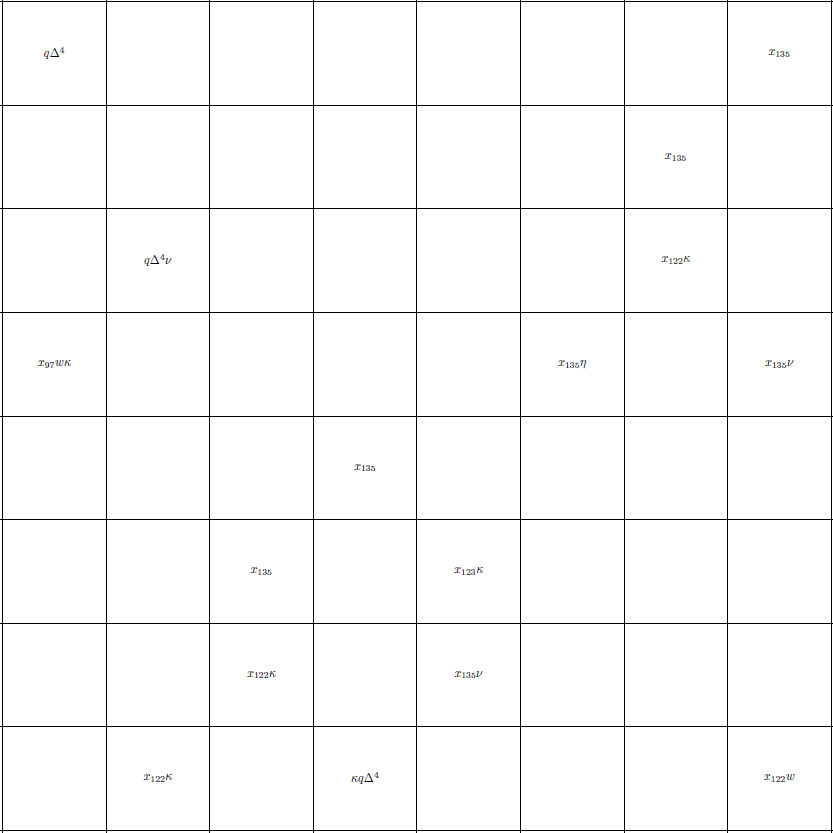} 
\caption{The $E_8$-page of the AHSS for $\mathit{tmf}^{tC_2}$, $0 \leq t \leq 7$, $-135 \leq s \leq -128$.}
\end{figure}

\begin{figure}
\centering
\includegraphics[scale=0.9]{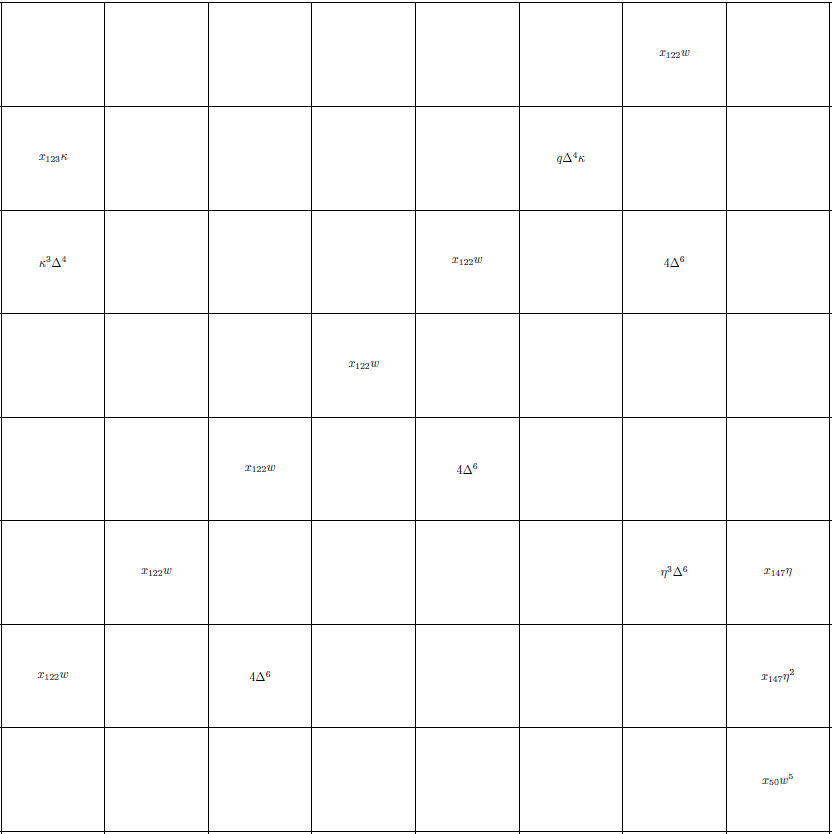} 
\caption{The $E_8$-page of the AHSS for $\mathit{tmf}^{tC_2}$, $0 \leq t \leq 7$, $-143 \leq s \leq -136$.}
\end{figure}

\begin{figure}
\centering
\includegraphics[scale=0.9]{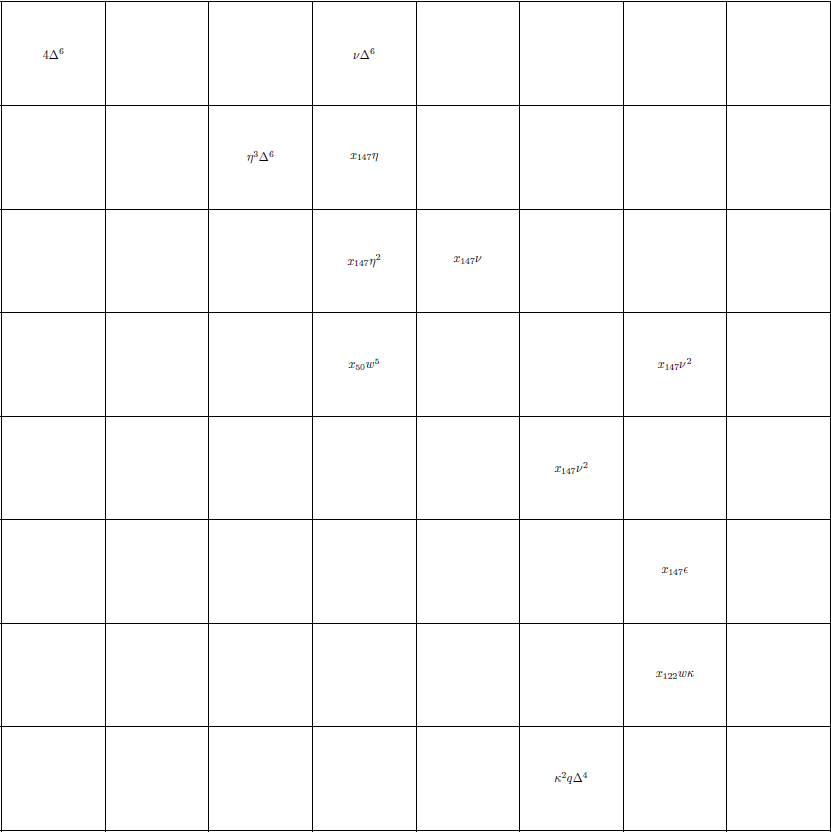} 
\caption{The $E_8$-page of the AHSS for $\mathit{tmf}^{tC_2}$, $0 \leq t \leq 7$, $-151 \leq s \leq -144$.}
\end{figure}

\begin{figure}
\centering
\includegraphics[scale=0.9]{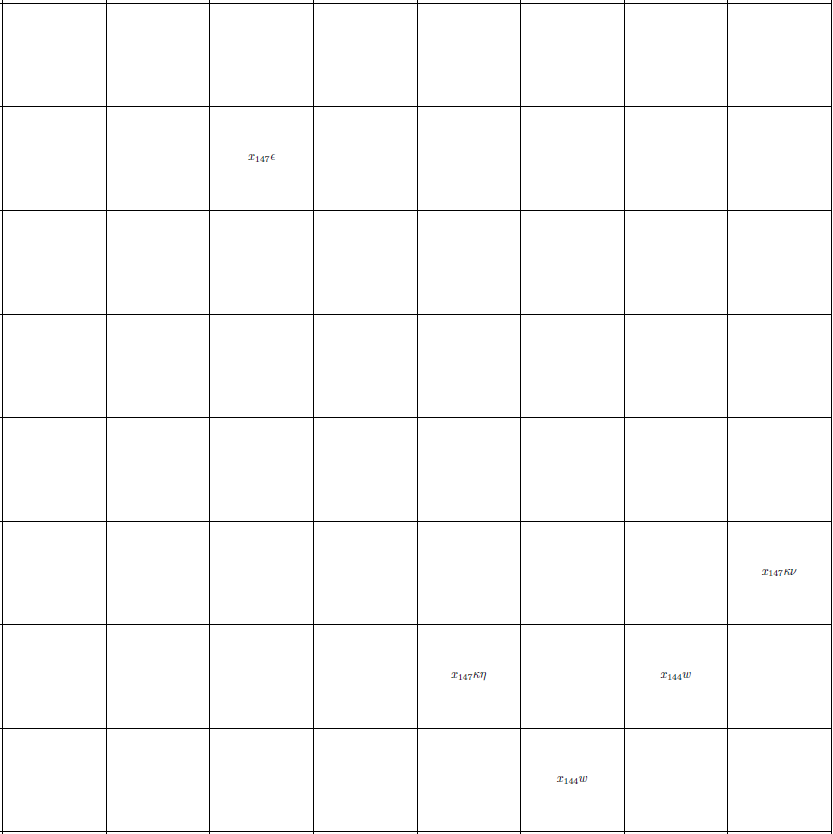} 
\caption{The $E_8$-page of the AHSS for $\mathit{tmf}^{tC_2}$, $0 \leq t \leq 7$, $-159 \leq s \leq -152$.}
\end{figure}

\begin{figure}
\centering
\includegraphics[scale=0.9]{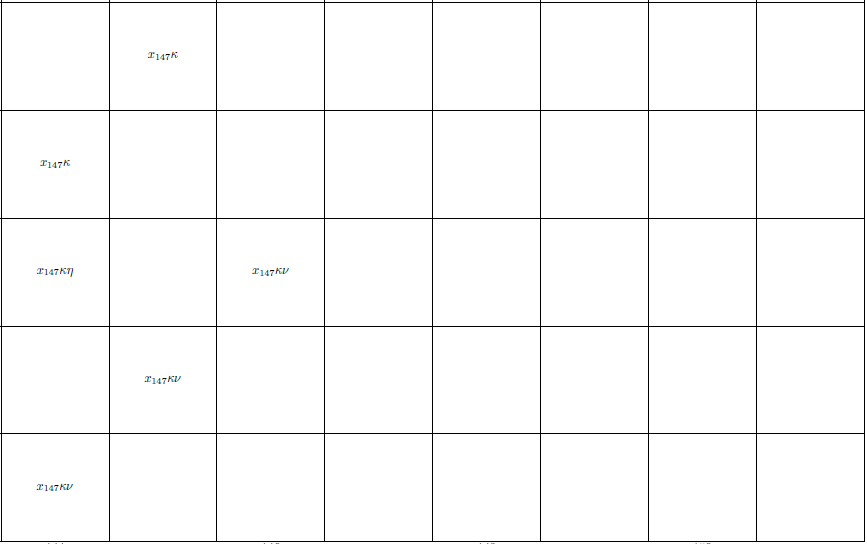} 
\caption{The $E_8$-page of the AHSS for $\mathit{tmf}^{tC_2}$, $0 \leq t \leq 7$, $-164 \leq s \leq -160$.}
\end{figure}

\end{document}